\documentclass[11pt]{amsart}

\usepackage{amsmath,amsthm, amscd, amssymb, amsfonts}
\usepackage[all]{xy}

\newtheorem{theorem}{Theorem}[section]
\newtheorem{lem}[theorem]{Lemma}
\newtheorem{cor}[theorem]{Corollary}

\newtheorem{pro}[theorem]{Proposition}
\newtheorem*{claim}{Claim}

\theoremstyle{definition}
\newtheorem{fed}[theorem]{Definition}

\theoremstyle{remark}
\newtheorem{rem}[theorem]{Remark}

\newcommand{\ydh}{{}^{H}_{H}\mathcal{YD}}
\newcommand{\ydg}{{}^{G}_{G}\mathcal{YD}}

\newcommand\id{\operatorname{id}}

\newcommand\Hom{\operatorname{Hom}}
\newcommand\End{\operatorname{End}}
\newcommand\ind{\operatorname{Ind}}
\newcommand\ext{\operatorname{Ext}}
\newcommand\mo{\operatorname{mod}}
\newcommand\sg{\operatorname{sgn}}
\newcommand\st{\operatorname{st}}

\newcommand\gr{\operatorname{gr}}

\newcommand\ad{\operatorname{ad}}

\newcommand\diag{\operatorname{diag}}

\newcommand\img{\operatorname{im}}

\newcommand\ab{\operatorname{ab}}

\def\k{\Bbbk}
\def\yd{\mathfrak{YD}}

\def\ot{\otimes}

\def\s{\mathbb{S}}

\def\N{\mathbb{N}}
\def\B{\mathfrak{B}}

\def\eps{\epsilon}
\def\Z{\mathbb{Z}}

\def\mA{\mathcal{A}}

\def\mP{\mathcal{P}}
\def\mO{\mathcal{O}}
\def\mR{\mathcal{R}}

\def\mQ{\mathcal{Q}}
\def\mH{\mathcal{H}}
\def\hq{\mH(\mQ)}
\def\im{\mathrm{i}}

%%%%%%%%%%%%%%%%%%%%%%%%%%%%%%%%%%%%%%%%%%%%%%%%%%%%%%%%%%%%%%%%%%%%%

\begin{document}

%%%%%%%%% TITLE  AND OTHERS %%%%%%%%%%%%%%%%%%%%%

\title[Representations of pointed Hopf algebras over
$\s_3$] {Representations of finite dimensional pointed Hopf algebras
over $\s_3$}

\author[Garc\'ia Iglesias, Agust\'in]
{Garc\'ia Iglesias, Agust\'in}

\address{FaMAF-CIEM (CONICET), Universidad Nacional de C\'ordoba,
Medina A\-llen\-de s/n, Ciudad Universitaria, 5000 C\' ordoba,
Rep\' ublica Argentina.} \email{aigarcia@mate.uncor.edu}

\thanks{\noindent 2000 \emph{Mathematics Subject Classification.}
16W30. \newline The work was partially supported by CONICET,
FONCyT-ANPCyT, Secyt (UNC)}

\date{\today}

\begin{abstract}
The classification of finite-dimensional pointed Hopf algebras with group $\s_3$ was finished in
\cite{AHS}: there are exactly two of them, the bosonization of a Nichols algebra of dimension 12 and a non-trivial lifting.
Here we determine all simple modules over any of these
Hopf algebras. We also find the Gabriel
quivers, the projective covers of the simple modules, and prove
that they are not of finite representation type.
To this end, we first investigate the modules over some complex pointed
Hopf algebras defined in the papers \cite{AG1, GG},
whose restriction to the group of group-likes is a direct sum of
1-dimensional modules.
\end{abstract}

\maketitle

%%%%%%%%%%%%%%%%%%%%%%%%%%%%%%%%%%%%%%%%%%%%%%%%%%%%%%%%%%%
\section{Introduction}
In \cite{AG1}, a pointed Hopf
algebra $H_n$ was defined for each $n\geq 3$. It was shown there that $H_3$ and $H_4$ are non-trivial pointed Hopf algebras over $\s_3$ and $\s_4$, respectively. We showed in \cite{GG} that this holds for every $n$, by different methods. We
started by defining generic families of pointed Hopf algebras
associated to certain data, which includes a finite non-abelian
group $G$. Under certain conditions, these algebras are liftings
of (possibly infinite dimensional) quadratic Nichols algebras over
$G$. In particular, this was proven to hold for $G=\s_n$.
Moreover, the classification of finite
dimensional pointed Hopf algebras over $\s_4$ was finished. We review some
of these facts in Section 2. We investigate, in Section 3, modules
over these algebras whose $G$-isotypic components are 1-dimensional
and classify indecomposable modules of this kind. We find
conditions on a given $G$-character under which it can be extended
to a representation of the algebra. We apply these results to the
representation theory of two families of pointed Hopf algebras
over $\s_n$. In Section 4 we comment on some known facts about simple modules
over bosonizations. We also prove general facts about projective
modules over the algebras defined in \cite{AG1,GG}, and recall a few facts about
representation type of finite dimensional algebras. In Section 5 we use some of
the previous results to classify simple modules over pointed Hopf algebras over
$\s_3$. In addition, we find their
projective covers and compute their fusion rules, which lead to show that
the non-trivial lifting is not quasitriangular. We also
write down the Gabriel quivers and show that these algebras are
not of finite representation type. 

\section{Preliminaries}
We work over an algebraically closed field $\k$ of characteristic
zero. We fix $\im=\sqrt{-1}$. For $n\in\mathbb{N}$, let $[\frac n2]$ denote the
biggest integer lesser or equal than $\frac n2$. If $V$ is a vector space and
$\{x_i\}_{i\in I}$ is a family of elements in $V$, we denote by
$\k\{x_i\}_{i\in I}$ the vector subspace generated by it.  Let
$G$ be a finite group, $\widehat{G}$ the set of its irreducible
representations. Let $G_{\ab}=G/[G,G]$,
$\widehat{G_{\ab}}=\Hom(G,\k^*)\subseteq \widehat{G}$. We denote by
$\eps\in\widehat{G_{\ab}}$ the trivial representation. If
$\chi\in\widehat{G}$, and $W$ is a $G$-module, we denote by
$W[\chi]$ the isotypic component of type $\chi$, and by $W_\chi$ the
corresponding simple $G$-module.

A \emph{rack} is a pair $(X,\rhd)$, where $X$ is a non-empty set
and $\rhd:X\times X\to X$ is a function, such that $\phi_i=i\rhd
(\cdot):X\to X$ is a bijection for all $i\in X$ and $ i\rhd(j\rhd
k)=(i\rhd j)\rhd (i\rhd k), \, \forall i,j,k\in X$. A rack $(X,\rhd)$ is said to
be \emph{indecomposable} if it cannot
be decomposed as the disjoint union of two sub-racks. We shall always
work with racks that are in fact quandles, that is that $i\rhd i=i$
$\forall\,i\in X$.
In practice, we are interested in the case in which the rack $X$ is a conjugacy
class in a group;
hence this  assumption always holds. We will denote by 
$\mO_2^n$ the conjugacy class of transpositions in $\s_n$. 

A 2-cocycle
$q:X\times X\to \k^*$, $(i,j)\mapsto q_{ij}$ is a function such
that $q_{i,j\rhd k}q_{j,k}=q_{i\rhd j,i\rhd k}q_{i,k}, \
\forall\,i,j,k\in X$. See \cite{AG1} for a detailed exposition on this matter.

Let $H$ be a Hopf
algebra over $\k$, with antipode $\mathcal S$. Let $\ydh$ be the category of (left-left)
Yetter-Drinfeld modules over $H$. That is, $M$ is an object of $\ydh$ if and
only if there exists an action $\cdot$ such that $(M,\cdot)$ is a (left)
$H$-module and a coaction $\delta$ such that $(M,\delta)$ is a
(left) $H$-comodule, subject to the following compatibility
condition:
\begin{equation*}
\delta(h\cdot m)=h_1m_{-1}\mathcal S(h_3)\ot h_2\cdot m_{0}, \ \forall\, m\in
M, h\in H,
\end{equation*}
where $\delta(m)=m_{-1}\ot m_0$. If $G$ is a finite group and
$H=\k G$, we write $\ydg$ instead of $\ydh$.

Recall from \cite[Def. 3.2]{AG2} that a \emph{principal
YD-realization} of $(X,q)$ over a finite group $G$ is a collection
$(\cdot, g, (\chi_i)_{i\in X})$ where 
\begin{itemize}
\item  $\cdot$ is an action of $G$
on $X$;
\item  $g:X\to G$ is a function such that $g_{h\cdot i} = hg_
{i}h^{-1}$ and $g_{i}\cdot j=i\rhd j$;
\item the family
$(\chi_i)_{i\in X}$, with $\chi_i:G\to\k^*$, is a 1-cocycle,
\emph{i.~e.} $\chi_i(ht)=\chi_i(t)\chi_{t\cdot i}(h)$, for all
$i\in X$, $h,t\in G$, satisfying $\chi_i(g_{j})=q_{ji}$.
\end{itemize}
In words, a principal YD-realization over $G$ is a way to realize the braided
vector space $(\k X,c^q)$ as a YD-module over $G$. See \cite{AG2} for details.

\subsection{Quadratic lifting data}

\

Let $X$ be a rack, $q$ a 2-cocycle. Let $\mR$ be the set of
equivalence classes in $X\times X$ for the relation generated by
$(i,j)\sim(i\rhd j,i)$. Let $C\in\mR$, $(i,j)\in C$. Take $i_1=j$,
$i_2=i$, and recursively, $i_{h+2}=i_{h+1}\rhd i_h$. Set $n(C)=\#C$
and $$\mR'=\Big\{C\in \mR\,|\,\prod_{h=1}^{n(C)}
q_{i_{h+1},i_h}=(-1)^{n(C)}\Big\}.$$ Let $F$ be the free associative
algebra in the variables $\{T_l\}_{l\in X}$. If $C\in\mR'$, consider
the quadratic polynomial \begin{equation}\label{eqn:phiC}\phi_C =
\sum_{h=1}^{n(C)}\eta_h(C) \, T_{i_{h+1}}T_{i_h}\in F,\end{equation}
where $\eta_1(C)=1$ and
$\eta_h(C)=(-1)^{h+1}q_{{i_2i_1}}q_{{i_3i_2}}\ldots
  q_{{i_hi_{h-1}}}$, $h\ge 2$.

\smallbreak
A \emph{quadratic lifting datum} $\mQ=(X,q,G, (\cdot, g,
(\chi_l)_{l\in X}),(\lambda_C)_{C\in \mR'})$, or ql-datum,
\cite[Def. 3.5]{GG}, is a collection consisting of 
\begin{itemize}
 \item a rack $X$; \item a
2-cocycle $q$; \item a finite group $G$; \item a principal YD-realization
$(\cdot, g, (\chi_l)_{l\in X})$ of $(X,q)$ over $G$ such that
$g_i\neq g_jg_k, \, \forall\,i,j,k\in X$; \item a collection
$(\lambda_C)_{C\in \mR'}\in\k$ such that, if
$C=\{(i_2,i_1),\ldots,(i_n,i_{n-1})\}$, and $k\in X$,
\begin{align}
   \lambda_C&=0, &&  \mbox{if } \ g_{i_2}g_{i_1}=1,\label{eqn:lambda1}\\
  \label{eqn:lambda2} \lambda_C&=q_{k
i_{2}}q_{k i_{1}}\lambda_{k\rhd C},  &&
\end{align}
where $k\rhd C=\{(k\rhd i_2,k\rhd i_1),\ldots,(k\rhd i_n,k\rhd i_{n-1})\}$.
\end{itemize}
In \cite{GG}, we attached a pointed Hopf algebra $\mH(\mQ)$
to each ql-datum $\mQ$. It is generated by $\{a_l, H_t : l\in X, \,
t\in G\}$ with relations:
\begin{align}
\label{eqn:hq1} H_e &=1, \quad H_tH_s=H_{ts}, &  t,s&\in G;\\
\label{eqn:hq2} H_ta_l &= \chi_l(t)a_{t\cdot l}H_t,  &  t&\in G, \, l \in X; \\
\label{eqn:hq3} \phi_C(\{a_l\}_{l\in X}) &= \lambda_C(1-H_{g_ig_j}),  &
C&\in \mR', \, (i,j)\in C.
\end{align}
Here $\phi_C$ is as in \eqref{eqn:phiC}
above. We denote by $a_C$ the left-hand
side of \eqref{eqn:hq3}. $\mH(\mQ)$ is a pointed Hopf algebra, setting
$\Delta(H_t)=H_t\ot H_t$,  $\Delta(a_i)=g_i\ot a_i+a_i\ot 1$, $t\in
G$, $i\in X$. See \cite{GG} for further details on this construction
and for unexplained terminology.

Notice that by definition of the Hopf algebras $\hq$, the group of
grouplikes $G(\hq)$ is a quotient of the group $G$. Thus, any
$\hq$-module $M$ is a $G$-module, using the corresponding
projection. We denote this module by $M_{|G}$. For simplicity, we
denote $M[\rho]=M_{|G}[\rho]$, $\rho\in\widehat G$.

\section{Modules that are sums of 1-dimensional representations}

In this Section, we study $\hq$-modules whose underlying $G$-module
is a direct sum of representations in $\widehat{G_{\ab}}$. 

We begin by fixing the following notation. Given a pair $(X,q)$,
let
\begin{equation}\label{eqn:zh}
 \zeta_h(C)=\begin{cases}

(-1)^{\frac{h}2-1}\left(\prod\limits_{l=1}^{\frac{h}2-1}q_{i_{h-2l+1},i_{h-2l}}
\right) & \text{if } 2|h,\\

(-1)^{\frac{h-1}2}\left(\prod\limits_{l=1}^{\frac{h-1}2}q_{i_{h-2l+1},i_{h-2l}}
\right) & \text{if } 2|h+1.
         \end{cases}
\end{equation}
Note that $\zeta_{1}(C) = \zeta_2(C) = 1$, $\zeta_{h+1}(C)\zeta_h(C)
=\eta_h(C)$, see \eqref{eqn:phiC}.

\subsection{Modules whose underlying $G$-module is isotypical}

\

We first study extensions of multiplicative characters from $G$ to $\hq$.

\begin{pro}\label{pro:dim1}
Let $\rho\in\widehat {G_{ab}}$. There exists
$\bar\rho\in\hom_{alg}(\hq,\k)$ such that $\bar\rho_{|G}=\rho$ if and only if
\begin{equation}\label{eqn:tres}
  0=\lambda_C(1-\rho(g_ig_j))  \ \text{if }(i,j)\in C\text{ and }
2|n(C),
\end{equation}
and there exists a family $\{\gamma_i\}_{i\in X}$ of scalars such that
\begin{align}
\gamma_j&=\chi_j(t)\gamma_{t\cdot j}&&\forall\,t\in G,j\in
X,\label{eqn:uno}\\
\gamma_i\gamma_j&=\lambda_C(1-\rho(g_ig_j))&&\text{if
}(i,j)\in
C\text{ and } 2|n(C)+1\label{eqn:dos}.
\end{align}
If \eqref{eqn:tres} holds, then the set of all extensions $\bar\rho$
of $\rho$ is in bijective correspondence with the set of families
$\{\gamma_i\}_{i\in X}$
that satisfy \eqref{eqn:uno} and \eqref{eqn:dos}. In particular, if
\begin{equation}\label{eqn:gigj}   \lambda_C\neq
0\Rightarrow\rho(g_ig_j)=1, \quad C\in\mR',\,(i,j)\in C.
\end{equation}
then $\gamma_i=0,\ \forall\,i\in X$ defines an $\hq$-module.
Moreover, this is the only possible extension if, in addition,
\begin{equation}\label{eqn:chiandrhd} \chi_i(g_i)\neq
1,\quad \forall\,i\in X.\end{equation}
\end{pro}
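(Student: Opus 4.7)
The plan is to use that $\hq$ is presented by generators $\{H_t, a_l\}$ with relations \eqref{eqn:hq1}--\eqref{eqn:hq3}, so an algebra map $\bar\rho : \hq \to \k$ with $\bar\rho_{|G} = \rho$ is uniquely determined by the scalars $\gamma_l := \bar\rho(a_l)$ for $l \in X$. Existence of the extension then amounts to each defining relation becoming valid in $\k$ after applying $\bar\rho$, and I would translate these one by one.

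Relation \eqref{eqn:hq1} is automatic since $\rho$ is a group homomorphism. Relation \eqref{eqn:hq2} evaluates to $\rho(t)\gamma_l = \chi_l(t)\gamma_{t\cdot l}\rho(t)$; since $\rho(t)\in\k^*$ this is exactly \eqref{eqn:uno}. For \eqref{eqn:hq3}, fixing $C \in \mR'$ with $(i_2, i_1) = (i, j) \in C$ as in \eqref{eqn:phiC}, one obtains
\begin{equation*}
\sum_{h=1}^{n(C)} \eta_h(C)\, \gamma_{i_{h+1}}\gamma_{i_h} \;=\; \lambda_C\bigl(1 - \rho(g_ig_j)\bigr).
\end{equation*}

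The crux is to simplify this sum. Applying \eqref{eqn:uno} with $t = g_{i_{h+1}}$ and $j = i_h$, and using $g_{i_{h+1}}\cdot i_h = i_{h+1}\rhd i_h = i_{h+2}$, gives the recursion $\gamma_{i_{h+2}} = q_{i_{h+1}, i_h}^{-1}\gamma_{i_h}$. Iterating expresses each $\gamma_{i_h}$ as an explicit product of inverse $q$'s times $\gamma_{i_1}$ (when $h$ is odd) or $\gamma_{i_2}$ (when $h$ is even); since scalars commute, every product $\gamma_{i_{h+1}}\gamma_{i_h}$ becomes a scalar multiple of $\gamma_{i_2}\gamma_{i_1}$. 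A direct induction using the identity $\zeta_{h+1}(C)\zeta_h(C) = \eta_h(C)$ collapses the bookkeeping into the clean identity
\begin{equation*}
\eta_h(C)\,\gamma_{i_{h+1}}\gamma_{i_h} = (-1)^{h+1}\gamma_{i_2}\gamma_{i_1}.
\end{equation*}
Summing for $h = 1, \ldots, n(C)$ therefore yields $0$ if $n(C)$ is even and $\gamma_{i_2}\gamma_{i_1}$ if $n(C)$ is odd, which are precisely \eqref{eqn:tres} and \eqref{eqn:dos}. This gives both the necessity and sufficiency of the listed conditions, and the bijection with families $\{\gamma_i\}_{i\in X}$ follows because the $\gamma_l$ are the only free data in any such $\bar\rho$.

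Finally, under \eqref{eqn:gigj} the constant family $\gamma_i \equiv 0$ trivially satisfies \eqref{eqn:uno} and reduces both \eqref{eqn:dos} and \eqref{eqn:tres} to $0 = \lambda_C(1 - \rho(g_ig_j))$, which holds since either $\lambda_C = 0$ or $\rho(g_ig_j) = 1$. If moreover \eqref{eqn:chiandrhd} is assumed, then applying \eqref{eqn:uno} with $t = g_i$, $j = i$, together with the quandle identity $g_i \cdot i = i\rhd i = i$, yields $(1-\chi_i(g_i))\gamma_i = 0$, forcing $\gamma_i = 0$; so the trivial extension is the only one. The main obstacle is the telescoping identity above: the auxiliary functions $\zeta_h(C)$ introduced in \eqref{eqn:zh} are designed precisely so that $\zeta_{h+1}\zeta_h = \eta_h$, and this is what makes the $q$-factors from the iterated \eqref{eqn:uno} cancel with the coefficients of $\phi_C$, leaving the alternating signs that produce the parity dichotomy.
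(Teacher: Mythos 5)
Your proposal is correct and follows essentially the same route as the paper: translate the defining relations \eqref{eqn:hq1}--\eqref{eqn:hq3} under a putative algebra map, derive the recursion $\gamma_{i_{h+2}}=q_{i_{h+1},i_h}^{-1}\gamma_{i_h}$ from \eqref{eqn:uno}, and collapse $\phi_C$ via $\zeta_{h+1}(C)\zeta_h(C)=\eta_h(C)$ to the parity dichotomy giving \eqref{eqn:tres} and \eqref{eqn:dos}, with the final rigidity claim obtained exactly as in the paper by taking $t=g_i$ in \eqref{eqn:uno}.
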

\begin{rem}\label{rem:ii}
(a) Mainly, we will deal with Nichols algebras for which the following is
satisfied:
\begin{equation}\label{eqn:chiandrhd1} \chi_i(g_i)=
-1,\quad \forall\,i\in
X.\end{equation} \noindent  In this case, obviously \eqref{eqn:chiandrhd} holds
and the class $C_i=\{(i,i)\}$ belongs to $\mR'$.

\medbreak (b) If $X$ is indecomposable, using \eqref{eqn:uno} and the fact that
$\forall\, i\in X$ $\exists\,t\in G$ such that $i=t\cdot j$, we may
replace \eqref{eqn:dos} by
 \begin{align}\tag{\ref{eqn:dos}'}
\gamma_j^2 = \lambda_C(1-\rho(g_j)^2)\chi_j(t)&&\text{if
 } (i,j)\in C\text{ and } 2|n(C)+1.
 \end{align}
\end{rem}

\bigbreak\begin{proof}
Assume that such $\bar\rho$ exists and let $\gamma_i=\bar\rho(a_i)$.
Then \eqref{eqn:uno} follows from \eqref{eqn:hq2}. In particular,
for $p,q\in X$, we have $\bar\rho(a_{p\rhd
q})=\chi_q(g_p)^{-1}\bar\rho(a_q)$. Then, for $C\in\mR'$,
$(i_2,i_1)=(i,j)\in C$, it follows that
\begin{equation}\label{eqn:gamma}
  \gamma_{i_h}=\bar\rho(a_{i_h})=\begin{cases}
    (-1)^{\frac{h-1}2}\zeta_h(C)^{-1}\bar\rho(a_j)&\text{if }2|h+1\\
        (-1)^{\frac{h}2-1}\zeta_h(C)^{-1}\bar\rho(a_i)&\text{if }2|h,\\
  \end{cases}
\end{equation}
cf. \eqref{eqn:zh}. Consequently, \begin{equation}\label{eqn:etah}
\bar\rho(a_{i_{h+1}}a_{i_{h}})=(-1)^{h+1}\eta_h(C)^{-1}
\bar\rho(a_i)\bar\rho(a_j)
\end{equation} and thus \eqref{eqn:dos} and
\eqref{eqn:tres} follow from \eqref{eqn:hq3}. Conversely, if \eqref{eqn:tres}
holds and $\{\gamma_i\}_{i\in X}$ is a family that satisfies
\eqref{eqn:uno} and \eqref{eqn:dos}, then we define $\bar\rho:\hq\to\k$ as the
unique algebra morphism such that
$\bar\rho(H_t)=\rho(t)$ and $\bar\rho(a_i)=\gamma_i$. If \eqref{eqn:chiandrhd}
holds, it follows from \eqref{eqn:uno} for
$t=g_i$ that $\bar\rho(a_i)=0\,\forall\,i\in X$ is a necessary
condition.
\end{proof}
\begin{fed}
Let $\bar\rho$ be an extension of $\rho\in\widehat {G_{ab}}$
and $\gamma_i=\bar\rho(a_i)$, $\gamma=(\gamma_i)_{i\in
X}\in\k^{X}$. Then we denote the corresponding $\hq$-module by
$S_\rho^\gamma$. If $\gamma=0$, we set $S_\rho^\gamma=S_\rho$.
\end{fed}

We now determine all $\hq$-modules whose underlying $G$-module is isotypical of type $\rho\in\widehat {G_{ab}}$,
provided that $X$ is indecomposable and \eqref{eqn:chiandrhd} holds.

\begin{pro}\label{pro:dims1}
Assume $X$ is indecomposable. Let  $M$ be an $\hq$-module such that $M=M[\rho]$
for a unique $\rho\in\widehat {G_{ab}}$, $\dim M=n$.
Then $M$ is simple if and only if $n=1$. If, in addition,
\eqref{eqn:chiandrhd} holds, $M\cong S_\rho^{\oplus n}$.
\end{pro}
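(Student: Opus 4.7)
The plan is to exploit the hypothesis that $M$ is $\rho$-isotypic to reduce the action of the generators $a_l$ on $M$ to scalar multiples of a single endomorphism, and then to identify the $\hq$-submodules of $M$ with the invariant subspaces of that endomorphism.

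First I would use that $H_t$ acts on $M$ as the scalar $\rho(t)$. Combining this with relation \eqref{eqn:hq2} yields, for every $m\in M$, $t\in G$ and $l\in X$,
\begin{equation*}
\rho(t)\,a_l\cdot m \;=\; H_t\cdot(a_l\cdot m) \;=\; \chi_l(t)\,a_{t\cdot l}\cdot(H_t\cdot m)\;=\;\chi_l(t)\,\rho(t)\,a_{t\cdot l}\cdot m,
\end{equation*}
so writing $A_l\in\End(M)$ for the action of $a_l$, one has $A_l=\chi_l(t)\,A_{t\cdot l}$ for all $t\in G$ and $l\in X$. Since $X$ is indecomposable and $g_i\cdot j=i\rhd j$, the subgroup $\langle g_i : i\in X\rangle\le G$ already acts transitively on $X$; fixing $l_0\in X$ and choosing, for each $l\in X$, an element $t_l\in G$ with $t_l\cdot l_0=l$, it follows that every $A_l$ is a nonzero scalar multiple of $A:=A_{l_0}$.

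Next I would observe that the $\hq$-submodules of $M$ coincide with the $A$-invariant subspaces of $M$: indeed the $H_t$ act by scalars, and every $a_l$ acts as a scalar multiple of $A$, so closure under the $\hq$-action and closure under $A$ are equivalent conditions on a subspace. A $1$-dimensional $M$ is of course simple, and if $n\ge 2$ then $A$ admits an eigenvector (as $\k$ is algebraically closed), whose span is a proper nonzero $\hq$-submodule. This proves the biconditional.

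For the second assertion I would specialize $A_l=\chi_l(t)\,A_{t\cdot l}$ to $t=g_l$. Since $g_l\cdot l=l\rhd l=l$ by the quandle axiom, this reads $A_l=\chi_l(g_l)\,A_l$, and the hypothesis \eqref{eqn:chiandrhd} forces $A_l=0$; the scalar-multiple argument of the first paragraph then gives $a_j\cdot m=0$ for every $j\in X$ and $m\in M$. Consequently any decomposition of $M$ into $1$-dimensional $\k G$-subspaces is also an $\hq$-decomposition, each summand being an extension of $\rho$ with $\gamma=0$ in the notation of Proposition \ref{pro:dim1}; hence $M\cong S_\rho^{\oplus n}$. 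I expect the main conceptual step to be the transitivity reduction in the first paragraph: once the $|X|$ generator actions are collapsed to the single endomorphism $A$, everything else is routine.
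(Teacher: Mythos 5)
Your proposal is correct and follows essentially the same route as the paper: you derive from \eqref{eqn:hq2} that the operators $A_l$ satisfy \eqref{eqn:uno} and hence, by transitivity coming from indecomposability of $X$, are all nonzero scalar multiples of a single endomorphism, then produce a proper submodule from an eigenvector when $n\ge 2$ (the paper phrases this as choosing a basis making all $\Gamma_i$ upper triangular), and finally specialize to $t=g_l$ so that \eqref{eqn:chiandrhd} forces $A_l=0$ and $M\cong S_\rho^{\oplus n}$. No gaps; the argument matches the paper's proof step for step.
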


\begin{proof}
Let $\bar\rho:\hq\to\End M$ be the
corresponding representation and $\Gamma_j\in\k^{n\times n}$ be the matrix
associated to $\bar\rho(a_j)$ in some (fixed) basis. As in the proof of Prop.
\ref{pro:dim1}, $\{\Gamma_i\}_{i\in X}$ satisfies \eqref{eqn:uno}. Thus, if we
fix $j\in X$, then for each $i\in X$ there exists $t\in G$ such that
$\Gamma_i=\chi_j(t)^{-1}\Gamma_j$.  Thus, there exists a basis
$\{z_1,\ldots,z_n\}$ in which all of these matrices are upper
triangular and so $\k\{z_1\}$ generates a submodule $M'\subseteq M$. If
\eqref{eqn:chiandrhd} holds, then it
follows that $\Gamma_i=0$, $\forall\,i\in X$ and thus
$M\cong\bigoplus_{j=1}^nS_\rho$.
\end{proof}

\bigbreak
\subsection{Modules whose underlying $G$-module is a sum of two isotypical
components}\label{subsect:2isotipicas}

\

Let $\rho, \mu\in \widehat{G_{ab}}$ fulfilling \eqref{eqn:tres},
$\gamma,\delta\in\k^X$ satisfying \eqref{eqn:uno} and
\eqref{eqn:dos} for $\rho$ and $\mu$, respectively. We begin this Subsection by
describing
indecomposable modules that are extensions of $S_\rho^\gamma$ by
$S_\mu^\delta$.
For simplicity of the statement of \eqref{eqn:dosb} in the following Lemma, we
introduce the following notation. Let $C\in\mR'$, $j\in C$ and let
$$
\alpha_j(C)=\sum_{r=0}^{[\frac{n(C)}2]-1}\chi_j(g_j)^r, \quad
\beta_j(C)=\sum_{r=0}^{[\frac{n(C)+1}2]-1}\chi_j(g_j)^r.
$$
Note that if $2|n(C)$, then $\alpha_j=\beta_j$; otherwise,
$\beta_j=\alpha_j+\chi_j(g_j)^{[\frac{n(C)+1}2]-1}$.
\begin{lem}\label{lem:wrhomu}
Let $V$ be the space of solutions $\{f_i\}_{i\in X}\in\k^{X}$ of the
following system
\begin{align}
f_i\mu(t)&=\chi_i(t)f_{t\cdot i}\rho(t), \qquad\qquad i\in X,\, t\in
  G \text{ and}\label{eqn:unob}\\
(\alpha_j(C)\delta_j-\beta_j(C)\gamma_j)f_i&=-\chi_i(g_i)(\alpha_i(C)\delta_i-
\beta_i(C)\gamma_i)f_j, \label{eqn:dosb}
\end{align}
$C\in\mR',\,(i,j)\in C.$ Then
$\ext^1_{\hq}(S^\gamma_\rho,S^\delta_\mu)\cong V$ and the set of
isomorphism classes of indecomposable $\hq$-modules such that
\begin{equation}\label{eqn:exacta}
  0\longrightarrow S_\mu^\delta\longrightarrow
M\longrightarrow
  S_\rho^\gamma\longrightarrow0 \text{ is exact}
\end{equation}
is in bijective correspondence with $\mathbb{P}_k(V)$.
\end{lem}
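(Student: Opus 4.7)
The plan is to parameterize any extension $M$ as in \eqref{eqn:exacta} by an explicit scalar system. Since $\k G$ is semisimple and the only characters occurring in $M$ are $\rho$ and $\mu$, one can pick a basis $\{v_1,v_2\}$ of $M$ with $v_1$ a basis of $S_\mu^\delta\subset M$ and with $H_t\cdot v_2=\rho(t)v_2$ for every $t\in G$, such that $v_2$ descends to the fixed basis vector of the quotient $S_\rho^\gamma$. With respect to this basis, the submodule condition forces $a_i\cdot v_1=\delta_i v_1$ and the quotient condition forces $a_i\cdot v_2=\gamma_i v_2+f_iv_1$ for a uniquely determined family $\{f_i\}_{i\in X}\subset\k$. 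The proof then reduces to identifying which $\{f_i\}$ define a valid $\hq$-module structure.

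Relation \eqref{eqn:hq2} applied to $v_2$ is equivalent to \eqref{eqn:unob}, while on $v_1$ it is automatic from \eqref{eqn:uno} for $\delta$. For \eqref{eqn:hq3}, I would compute
\[
a_{i_{h+1}}a_{i_h}\cdot v_2=\gamma_{i_h}\gamma_{i_{h+1}}v_2+\bigl(\gamma_{i_h}f_{i_{h+1}}+\delta_{i_{h+1}}f_{i_h}\bigr)v_1;
\]
the $v_2$-component then reproduces the scalar identities \eqref{eqn:tres}--\eqref{eqn:dos} already satisfied by $\{\gamma_i\}$ by Proposition \ref{pro:dim1}, and the new content is the single equation $\sum_h\eta_h(C)\bigl(\gamma_{i_h}f_{i_{h+1}}+\delta_{i_{h+1}}f_{i_h}\bigr)=0$. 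I expect the main obstacle to be showing that this is equivalent to \eqref{eqn:dosb}. The route is to use \eqref{eqn:gamma} and its analogue for $\delta$ to express $\gamma_{i_h},\delta_{i_h}$ in terms of $\gamma_i,\gamma_j,\delta_i,\delta_j$ via the scalars $\zeta_h(C)$, to iterate \eqref{eqn:unob} at $t=g_{i_{h+1}}$ to express $f_{i_h}$ in terms of $f_i,f_j$, and then use $\zeta_{h+1}(C)\zeta_h(C)=\eta_h(C)$ to collapse the sum into a linear combination of $f_i$ and $f_j$; after a parity analysis in $n(C)$ the coefficients group precisely into $-\chi_i(g_i)\bigl(\alpha_i(C)\delta_i-\beta_i(C)\gamma_i\bigr)$ and $\alpha_j(C)\delta_j-\beta_j(C)\gamma_j$, yielding \eqref{eqn:dosb}. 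The auxiliary scalars $\alpha_j(C),\beta_j(C)$ are designed precisely to uniformize the even and odd cases, the single discrepancy $\chi_j(g_j)^{[(n(C)+1)/2]-1}$ accounting for the odd case.

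For the identification with $\ext^1$ and its projectivization, note that the assignment $\{f_i\}\mapsto M$ constructed above is $\k$-linear with respect to the Baer-sum structure, and the zero vector corresponds to the split extension; hence $\ext^1_{\hq}(S_\rho^\gamma,S_\mu^\delta)\cong V$. For the second bijection, $M$ is indecomposable iff the sequence is non-split iff $\{f_i\}\neq 0$. Any $\hq$-isomorphism between two such modules must send the unique simple submodule to the unique simple submodule and induce $\k^\times$-multiples on both $S_\mu^\delta$ and $S_\rho^\gamma$, so it is of the form $v_l\mapsto c_l v_l$ with $c_1,c_2\in\k^\times$, under which $\{f_i\}$ is rescaled by $c_2/c_1$. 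Therefore iso classes of indecomposable extensions as in \eqref{eqn:exacta} are in bijection with $\mathbb{P}_\k(V)$.
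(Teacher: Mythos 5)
Your proposal is correct and follows essentially the same route as the paper: choose a basis adapted to the extension so that $a_i\cdot v_1=\delta_i v_1$ and $a_i\cdot v_2=\gamma_i v_2+f_i v_1$, read off \eqref{eqn:unob} from \eqref{eqn:hq2}, reduce \eqref{eqn:hq3} to the single linear condition $\sum_h\eta_h(C)\bigl(\gamma_{i_h}f_{i_{h+1}}+\delta_{i_{h+1}}f_{i_h}\bigr)=0$ and collapse it to \eqref{eqn:dosb} via \eqref{eqn:gamma} and the iterated form of \eqref{eqn:unob}, then identify isomorphism classes of non-split extensions with $\mathbb{P}_\k(V)$ by the rescaling argument. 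The only cosmetic difference is that you make the Baer-sum linearity explicit, which the paper leaves implicit.
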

\begin{proof}
Let $M=\k\{z,w\}$ be as in \eqref{eqn:exacta}, with $z\in M[\rho]$, $w\in
M[\mu]$. Then there exists $\{f_i\}_{i\in X}$ such that
\begin{equation}\label{eqn:aiz}a_i z=\gamma_iz+f_iw.
\end{equation} Then \eqref{eqn:unob} follows from
\eqref{eqn:hq2} and this implies
\begin{equation*}
 f_{i_h}=\begin{cases}
          (-\chi_j(g_j))^{\frac{h}2-1}\zeta_h(C)^{-1}f_i & \text{if } 2|h,\\
          (-\chi_i(g_i))^{\frac{h-1}2}\zeta_h(C)^{-1}f_j & \text{if }
          2|h+1,
         \end{cases}
\end{equation*}
since, for $\tau=\rho$ or $\tau=\mu$,
\begin{align*}
 \tau(g_{i_{2l+1}})&=\tau(g_{i_{2l}}g_{i_{2l-1}}g_{i_{2l}}^{-1})=\tau(g_{i_{2l-1
}})=\dots=\tau(g_{i_{1}})=\tau(g_j),\\
 \tau(g_{i_{2l+2}})&=\tau(g_{i_{2l+1}}g_{i_{2l}}g_{i_{2l+1}}^{-1})=\tau(g_{i_{2l
}})=\dots=\tau(g_{i_{2}})=\tau(g_i),
\end{align*}
and $\frac{\mu(g_k)}{\rho(g_k)}=\chi_k(g_k)$. Therefore, if
$(i,j)\in C$ and $n=n(C)$, \eqref{eqn:hq3} holds if and only if
$$\sum_{h=1}^{n}\eta_h(C) \,
  \big(f_{i_h}\delta_{i_{h+1}}+f_{i_{h+1}}\gamma_{i_h}\big)=0,
\forall\,C\in\mR',$$ that is, using \eqref{eqn:gamma},
\eqref{eqn:hq3} holds if and only if \eqref{eqn:dosb} follows.

Conversely, if $\{f_i\}_{i\in X}$ fulfills \eqref{eqn:unob} and
\eqref{eqn:dosb}, then \eqref{eqn:aiz} together with $a_i
w=\delta_iw$ define an $\hq$-module which is an extension of
$S_\rho^\gamma$ by $S_\mu^\delta$. 

$M$ is indecomposable if and only
if $f_i\neq 0$ for some $i\in X$. Assume $M$ is indecomposable and let $M'=\k\{z',w'\}$ be another indecomposable $\hq$-module fitting in \eqref{eqn:exacta}, with $z'\in M'[\rho]$, $w'\in M'[\mu]$. Let $\{g_i\}_{i\in X}\in V$ be the corresponding solution of \eqref{eqn:unob} and \eqref{eqn:dosb}. Assume $\phi:M\to M'$ is an isomorphism of $\hq$-modules. In particular, $\phi$ is a $G$-isomorphism and thus there exist $\sigma,\tau\in\k^*$ such that $\phi(w)=\sigma w'$, $\phi(z)= \tau z'$. But then it is readily seen that $\sigma,\tau$ must satisfy $g_i=\sigma\tau^{-1} f_i$, $i\in X$. That is, $[f_i]_{i\in X}=[g_i]_{i\in X}$ in $\mathbb{P}_\k(V)$. The converse is clear.
\end{proof}

\begin{rem}\label{rem:wrhomu}
If $X$ is indecomposable, then, up to isomorphism, there is at most
one indecomposable $\hq$-module $M$ as in the Lemma. In fact, if
there is one, let $\{f_i\}_{i\in X}\in\k^{X}$ be the corresponding
solution of \eqref{eqn:unob} and \eqref{eqn:dosb}. Then, if we fix
$j\in X$ and let $t_i\in G$ be such that $i=t_i\cdot j$, $i\in X$,
then
\begin{equation}\label{eqn:fifj}
(f_i)_{i\in X}=
f_j\left(\chi_{j}(t_i)\frac{\mu(t_i)}{\rho(t_i)}\right)_{i\in
X}\in\k^{X},
\end{equation}
and thus $M$ is uniquely determined. In this case, the existence of a solution is
equivalent to \eqref{eqn:unob} and
\begin{align}\tag{\ref{eqn:dosb}'}\label{eqn:dosbprima}
(\alpha_j\delta_j-\beta_j\gamma_j)\left(\frac{\mu(t_i)}{\rho(t_i)}
+\chi_j(g_j)\right) f_j=0;
\end{align}
if $(i,j)\in C$, $C\in\mR'$, $i=t_i \cdot j$.
\end{rem}

\begin{fed}
Assume $X$ is indecomposable and $\ext^1_{\hq}(S^\gamma_\rho,S^\delta_\mu)\neq
0$. We
denote the corresponding unique indecomposable $\hq$-module
by $M_{\rho,\mu}^{\gamma,\delta}$. If $\gamma=\delta=0$, then
\eqref{eqn:dosbprima} is a tautology. We set
$M_{\rho,\mu}:=M_{\rho,\mu}^{0,0}$.
\end{fed}

\emph{Assume that $X$ is indecomposable and that $G=\langle\{g_i\}_{i\in X}\rangle$. }Let $j$ be a fixed element in $X$.
Define $\ell:G\to \Z$, resp. $\psi:G\to \k^*$, as
$$
\ell(t)=  \min\{n\,:\,t=g_{i_1}\dots g_{i_n},\ i_1,\dots,i_n\in X\},
$$
resp. $\psi(t) = \chi_j(g_j)^{\ell(t)}$, $t\in G$.
Notice that $\tau(g_i)=\tau(g_j)$, $\forall\,i\in X$, hence
$\tau(t) = \tau(g_j)^{\ell(t)}$, for any $\tau\in\widehat{G_{ab}}$, $t\in G$.

\begin{lem}\label{lem:wrhomu2}
Keep the above hypotheses. If $\ext^1_{\hq}(S^\gamma_\rho,S^\delta_\mu)\neq 0$, then
\begin{align}\label{eqn:mu-determina-rho}
\mu(s)&=\psi(s) \rho(s), \qquad \forall s\in G.
\end{align}
Therefore $\rho$ determines $\mu$ (and vice versa), and $\psi$ is a group
homomorphism.

Conversely, if \eqref{eqn:mu-determina-rho} holds, we may
replace \eqref{eqn:unob}
and \eqref{eqn:dosb} by
\begin{align}
f_i\chi_j(g_j)^{\ell(t)}&=\chi_i(t)f_{t\cdot i}, && i\in X,\, t\in
  G \text{ and}\tag{\ref{eqn:unob}'}\\
0&=
f_j(\alpha_j\delta_j-\beta_j\gamma_j)\left(\chi_j(g_j)^{\ell(t_i)-1}+1\right),
&&
\tag{\ref{eqn:dosb}''}
\end{align}
if $(i,j)\in C$, $C\in\mR'$, $i=t_i \cdot j$.
\end{lem}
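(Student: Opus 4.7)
The plan is to argue the two implications separately. First I would establish the forward direction: assume $\ext^1_{\hq}(S^\gamma_\rho, S^\delta_\mu) \neq 0$, so by Lemma \ref{lem:wrhomu} there is a nonzero family $(f_i)_{i\in X}\in V$. Since $X$ is indecomposable, Remark \ref{rem:wrhomu} applies and \eqref{eqn:fifj} expresses every $f_i$ as a scalar multiple of $f_j$, so necessarily $f_j \neq 0$. The key step is to specialize \eqref{eqn:unob} to $i = j$, $t = g_j$: since $X$ is a quandle, $g_j \cdot j = j \rhd j = j$, and the equation collapses to $f_j \mu(g_j) = \chi_j(g_j)\, f_j\, \rho(g_j)$, forcing $\mu(g_j) = \chi_j(g_j)\rho(g_j)$. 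Indecomposability makes every $g_i$ conjugate to $g_j$ in $G$, so $\tau(g_i) = \tau(g_j)$ for any $\tau\in\widehat{G_{ab}}$, whence $\mu(g_i) = \chi_j(g_j)\rho(g_i)$ for all $i\in X$. Writing $s = g_{i_1}\cdots g_{i_{\ell(s)}}$ and using multiplicativity of $\mu,\rho$ then yields \eqref{eqn:mu-determina-rho}; the fact that $\psi = \mu\rho^{-1}$ is a homomorphism is automatic as a quotient of characters.

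For the converse, assume \eqref{eqn:mu-determina-rho}. Then $\mu(t)/\rho(t) = \chi_j(g_j)^{\ell(t)}$, and substituting into \eqref{eqn:unob} gives \eqref{eqn:unob}' immediately. To rewrite \eqref{eqn:dosb}, I would first use the cocycle identity $\chi_i(ht) = \chi_i(t)\chi_{t\cdot i}(h)$ together with $g_j\cdot j = j$ and $g_i = t_i g_j t_i^{-1}$ (for $i = t_i\cdot j$) to verify the invariance $\chi_i(g_i) = \chi_j(g_j)$; this forces $\alpha_i(C) = \alpha_j(C)$ and $\beta_i(C) = \beta_j(C)$ for every $C\in\mR'$. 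Next, \eqref{eqn:uno} applied to $\gamma$ and $\delta$ yields $\gamma_i = \chi_j(t_i)^{-1}\gamma_j$ and $\delta_i = \chi_j(t_i)^{-1}\delta_j$. Plugging these into \eqref{eqn:dosb} reduces it to
\begin{equation*}
(\alpha_j\delta_j - \beta_j\gamma_j)\bigl(f_i + \chi_j(g_j)\chi_j(t_i)^{-1} f_j\bigr) = 0,
\end{equation*}
and finally \eqref{eqn:unob}' with $i = j$, $t = t_i$ gives $f_i = \chi_j(t_i)^{-1}\chi_j(g_j)^{\ell(t_i)} f_j$; inserting this and factoring out the nonzero scalar $\chi_j(t_i)^{-1}\chi_j(g_j)$ produces exactly \eqref{eqn:dosb}''.

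The main obstacle I anticipate is the invariance $\chi_i(g_i) = \chi_j(g_j)$ and the bookkeeping in the final substitution leading to \eqref{eqn:dosb}''; the rest is a direct translation of the hypothesis $\mu = \psi\rho$ through \eqref{eqn:unob} and \eqref{eqn:dosb}, while the forward direction is essentially a one-line exploitation of the quandle axiom at $(i,t) = (j,g_j)$.
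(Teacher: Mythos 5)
Your proposal is correct and takes essentially the same approach as the paper, whose entire proof reads ``Setting $i=j$ and $t=g_j$ in \eqref{eqn:unob}, and taking the $\ell(s)$-th power, we get \eqref{eqn:mu-determina-rho}. The rest is straightforward.'' You have merely filled in the details left implicit there --- the nonvanishing of $f_j$ via \eqref{eqn:fifj}, the invariance $\chi_i(g_i)=\chi_j(g_j)$, and the substitution reducing \eqref{eqn:dosb} to \eqref{eqn:dosbprima} and then to the stated form --- all of which check out.
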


\begin{proof}
Setting $i=j$ and $t=g_j$ in \eqref{eqn:unob}, and taking the $\ell(s)$-th power, we get \eqref{eqn:mu-determina-rho}. The rest is straightforward.
\end{proof}

\medbreak

We will show next that there are no simple modules $M$ of dimension 2 such that $M_{|G}$ is sum of
two (necessarily different) components of dimension 1, provided that the
following holds:
\begin{equation}\label{eqn:c1}
 \exists\,C\in\mR'\quad \text{with }\quad n(C)>1.
\end{equation}
Notice that if \eqref{eqn:c1} does not hold and $\gr\hq=\B(X,q)\sharp\k G$, then it follows that $\dim\hq=\infty$, provided that $|X|>1$, since
$\{(a_ia_j)^n\}_{n\in\N}$ is a linearly independent set in $\hq$.

\begin{lem}\label{lem:2simple}
Assume $X$ is indecomposable, and that \eqref{eqn:chiandrhd1} and \eqref{eqn:c1} hold.
Let $\rho,\mu\in\widehat{G_{ab}}$, and let $M$ be an $\hq$-module such that
$M=M[\rho]\oplus M[\mu]$, $\dim M[\rho]=\dim
M[\mu]=1$. Then $M$ is not simple.
\end{lem}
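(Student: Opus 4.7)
The plan is to argue by contradiction, assuming $M$ is simple. Since $M=M[\rho]\oplus M[\mu]$ decomposes into two distinct $G$-isotypic components, necessarily $\rho\neq\mu$. Pick $z\in M[\rho]$ and $w\in M[\mu]$ spanning these lines, and write $a_iz=\gamma_iz+f_iw$, $a_iw=\delta_iw+h_iz$. Setting $t=g_i$ in \eqref{eqn:hq2}, and using $g_i\cdot i=i$ together with \eqref{eqn:chiandrhd1}, forces $\gamma_i=-\gamma_i$ (so $\gamma_i=0$) and likewise $\delta_i=0$: each $a_i$ thus acts off-diagonally in the basis $\{z,w\}$.

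Since $\rho\neq\mu$, the only $G$-stable lines in $M$ are $\k z$ and $\k w$, so simplicity forces both families $(f_i)_{i\in X}$ and $(h_i)_{i\in X}$ to have some nonzero entry. Transitivity of $G$ on $X$ (indecomposable rack) together with \eqref{eqn:unob} then makes every $f_i$ and $h_i$ nonzero. Substituting $t=g_i$ into \eqref{eqn:unob} also yields $\mu(g_i)=-\rho(g_i)$ for all $i\in X$; by Remark~\ref{rem:wrhomu}, $\kappa:=f_ih_i$ is independent of $i$, and $\kappa\neq 0$.

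Next, I would apply the defining relation $a_p^2=\lambda_{C_p}(1-H_{g_p^2})$ coming from $C_p=\{(p,p)\}\in\mR'$ (cf.\ Remark~\ref{rem:ii}(a)). Acting on $z$ gives $\kappa=\lambda_{C_p}(1-\rho(g_p)^2)$, and acting on $w$ yields the same equation since $\mu(g_p)^2=\rho(g_p)^2$. Setting $\alpha:=\rho(g_p)^2$ (well defined since the $g_p$'s are $G$-conjugate), we obtain $\kappa=\lambda_{C_p}(1-\alpha)$, forcing $\lambda_{C_p}\neq 0$ and $\alpha\neq 1$. In the principal case of interest, where $X$ is a conjugacy class of involutions (e.g.\ $X=\mO_2^n\subseteq\s_n$), $g_p^2=e$ makes $\alpha=1$ automatically and the contradiction $\kappa=0$ is immediate.

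For the general case, I would close via \eqref{eqn:c1}: pick $C\in\mR'$ with $n=n(C)>1$ and $(i,j)\in C$, and apply \eqref{eqn:hq3} at $C$ to $z$, obtaining
\[
\sum_{h=1}^n \eta_h(C)\,f_{i_h}h_{i_{h+1}}\;=\;\lambda_C(1-\alpha).
\]
Using the recursion $f_{i_{h+2}}=-f_{i_h}/q_{i_{h+1},i_h}$ (from \eqref{eqn:unob} at $t=g_{i_{h+1}}$, using $\mu(g_{i_{h+1}})/\rho(g_{i_{h+1}})=-1$) together with $f_ih_i=\kappa$, the left-hand side rewrites as $\kappa$ times a sum in the cocycle values $q_{i_{h+1},i_h}$ and $r:=f_j/f_i$; the periodicity $i_{n+1}=i_1$ and the identity $\prod_{h=1}^n q_{i_{h+1},i_h}=(-1)^n$ (from $C\in\mR'$) then constrain $r$ and force a contradiction with $\kappa\neq 0$. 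The main obstacle is precisely this last closing computation: the involutions case is much cleaner, with \eqref{eqn:c1} playing only the role of ensuring $\dim\hq<\infty$ as noted just before the statement.
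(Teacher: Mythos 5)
Your first half matches the paper's proof: the off-diagonality of each $a_i$ (forced by \eqref{eqn:hq2} at $t=g_i$ together with \eqref{eqn:chiandrhd1}), the identity $\mu(g_i)=-\rho(g_i)$, the non-vanishing of all $f_i$ and $h_i$ (the paper's $p_i$), and the relation $f_ih_i=\lambda_i(1-\rho(g_i)^2)$ coming from $a_i^2=\lambda_i(1-H_{g_i^2})$ are all there. Your observation that in the involution case $g_i^2=e$ (e.g.\ $X=\mO_2^n$) condition \eqref{eqn:lambda1} kills $\lambda_{C_i}$ and yields an immediate contradiction is correct, and it is a genuinely shorter route for the cases to which the paper later applies the lemma. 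One inaccuracy: $\kappa=f_ih_i$ is \emph{not} independent of $i$ in general --- from \eqref{eqn:fifj} and its analogue for $(h_i)$ one gets $f_ih_i=f_jh_j\chi_j(t_i)^2$, and $\lambda_i$ itself varies according to \eqref{eqn:lambda2} --- though this does not hurt you, since you only use $\kappa_p$ for a fixed $p$.

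The genuine gap is the general case, which you yourself flag as ``the main obstacle'': the contradiction is never derived, and the computation you gesture at is not the one that works. The paper does not evaluate the sum $a_Cz=\lambda_C(1-\rho(g_ig_j))z$; it exploits instead that the family $(p_i)$ is overdetermined: it satisfies \eqref{eqn:unob} with $\rho$ and $\mu$ interchanged (by equivariance), and it also equals $f_i^{-1}\lambda_i(1-\rho(g_i)^2)$. Comparing the two resulting expressions for the ratio $\eta_h(C)\lambda_{i_{h+1}}p_{i_h}/p_{i_{h+1}}$ for some $h>1$ --- this is exactly where $n(C)>1$, i.e.\ \eqref{eqn:c1}, enters --- forces $\rho(t)=-\mu(t)$ whenever $t\cdot j=i$ with $(i,j)\in C$; since $g_it$ also carries $j$ to $i$, combining with $\rho(g_i)=-\mu(g_i)$ gives $\mu(g_it)$ equal to both $\rho(g_it)$ and $-\rho(g_it)$, the desired contradiction. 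Your proposed route, by contrast, reduces (after the \eqref{eqn:llave1}-type identities) to a single scalar relation among $\lambda_C$, $\lambda_j$ and $r=\mu(t)/\rho(t)$, and it is not clear --- and you do not show --- that this one relation is contradictory. So the proposal is complete only for the involution case and leaves the lemma as stated unproved.
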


\medbreak

\begin{proof} Assume that there exists $M$ simple as in the hypothesis.
We first claim that $\rho\neq\mu$
and that, if $z\in M[\rho]$, then $a_i z\in M[\mu]$. In fact,
let $a_i z=u+w$ with $u\in M[\rho]$, $w\in M[\mu]$, then
$$
H_ta_i z=\rho(t)u+\mu(t)w, \quad \chi_i(t)a_{t\cdot i}H_t
z=\chi_i(t)\rho(t)a_{t\cdot i} z
$$
and taking $t=g_i$, we get $$
\rho(g_i)u+\mu(g_i)w=\chi_i(g_i)\rho(g_i)(u+w)
\overset{\eqref{eqn:chiandrhd1}}
= -\rho(g_i)u -\rho(g_i)w.$$
Thus $u=0$; hence $w\neq 0$ because $M$ is simple. Also, 
\begin{equation}\label{eqn:roimui}
\rho(g_i)=-\mu(g_i),\quad i\in X.
\end{equation}
By a symmetric
argument, $a_i (M[\mu]) = M[\rho]$.

\medbreak
Now, fix $0\neq z\in M[\rho]$,  $0\neq w\in M[\mu]$;
let $f_i$, $i\in X$, such that $a_i z=f_i w$. Then $(f_i)_{i\in X}$
satisfies \eqref{eqn:unob}, by \eqref{eqn:hq2}. As $X$ is
indecomposable and $M$ is simple, we have $f_i\neq 0$,
$\forall\,i\in X$. We necessarily have
\begin{equation}\label{eqn:pi}
 a_iw=p_iz, \quad
\text{for } \quad p_i=f_i^{-1}\lambda_i(1-\rho(g_i)^2).
\end{equation}
Note that $p_i\neq 0$ or otherwise $a_iw=0$,
$\forall\,i\in X$. As stated for $\{f_i\}$, the
family $\{p_i\}$ also satisfies
\eqref{eqn:unob}, with the roles of $\rho$ and $\mu$ interchanged.

\medbreak

Assume that there is $C\in\mR'$, with $n(C)>1$. We now show that this
contradicts the existence of $M$. Let $(i_2,i_1)=(i,j)\in C$, then
$$
a_C z=\sum_{h=1}^{n(C)}\eta_hf_{i_h}a_{i_{h+1}}w=\sum_{h=1}^{n(C)}\eta_hf_{i_h}
\frac{\lambda_{i_{h+1}}}{f_{i_{h+1}}}(1-\rho(g_{i_{h+1}})^2)z.
$$
Let $t\in G$ such that $i=t\cdot j$ and recall that $i_h=i_{h-1}\rhd
i_{h-2}$. Since $g_{s\cdot k}=g_sg_kg_s^{-1}$, then
$$\rho(g_{i_{h+1}})^2=\rho(g_{j})^2, \quad \forall\,h.$$ Now, by
\eqref{eqn:lambda2},
$\lambda_{i_h}=\lambda_{i_{h-1}\rhd
i_{h-2}}=\chi_{i_{h-2}}(g_{i_{h-1}})^{-2}\lambda_{i_{h-2}}$, then
\begin{equation*}
 \lambda_{i_h}=\begin{cases}
          \zeta_h(C)^{-2}\chi_j(t)^{-2}\lambda_j & \text{if } 2|h,\\
          \zeta_h(C)^{-2}\lambda_j & \text{if } 2|h+1.
         \end{cases}
\end{equation*}
Additionally, by \eqref{eqn:unob}, we have
\begin{equation}\label{eqn:fih}
 f_{i_h}=\begin{cases}
          \zeta_h(C)^{-1}\chi_j(t)^{-1}\dfrac{\mu(t)}{\rho(t)}f_j & \text{if
}
2|h,\\
          \zeta_h(C)^{-1}f_j & \text{if } 2|h+1,
         \end{cases}
\end{equation}
for every $h=1,\dots,n(C)$. Therefore, we have that:
\begin{equation}\label{eqn:llave1}
\eta_h(C)\lambda_{i_{h+1}}\dfrac{f_{i_h}}{f_{i_{h+1}}}=
\begin{cases}
\dfrac{\mu(t)}{\rho(t)}\chi_j(t)^{-1}\lambda_j & \text{if } 2|h,\\
\quad &\quad \\
\dfrac{\rho(t)}{\mu(t)}\chi_j(t)^{-1}\lambda_j & \text{if } 2|h+1.
\end{cases}
\end{equation}
Analogously, if we analyze the element $a_C w$, we get
\begin{equation}\label{eqn:llave2}
\eta_h(C)\lambda_{i_{h+1}}\dfrac{p_{i_h}}{p_{i_{h+1}}}=
\begin{cases}
\dfrac{\rho(t)}{\mu(t)}\chi_j(t)^{-1}\lambda_j & \text{if } 2|h,\\
\quad &\quad \\
\dfrac{\mu(t)}{\rho(t)}\chi_j(t)^{-1}\lambda_j & \text{if } 2|h+1.
\end{cases}
 \end{equation}
However, notice that, if $h>1$,
\begin{align*}
\eta_h(C)\lambda_{i_{h+1}}\dfrac{p_{i_h}}{p_{i_{h+1}}}&=\eta_h(C)\lambda_{i_{h+1
}}\dfrac{\lambda_{i_h}(1-\rho(g_{i_h})^2)f_{i_{h+1}}}{\lambda_{i_{h+1}}
(1-\rho(g_{i_{h+1}})^2)f_{i_h}}\\
&=-\eta_{h-1}(C)\chi_{i_{h-1}}(g_{i_h})\lambda_{i_h}\dfrac{f_{i_{h+1}}}{f_{i_h}}
\overset{\eqref{eqn:unob}}{=}-\eta_{h-1}(C)\lambda_{i_h}
\dfrac{f_{i_{h-1}}}{f_{i_h}}\dfrac{\mu(t)}{\rho(t)}\\
&\overset{\eqref{eqn:llave1}}{=}\begin{cases}
-\dfrac{\mu(t)^2}{\rho(t)^2}\chi_j(t)^{-1}\lambda_j & \text{if } 2|h-1,\\
\quad &\quad \\
-\chi_j(t)^{-1}\lambda_j & \text{if } 2|h.
\end{cases}
\end{align*}
And from this equality together with \eqref{eqn:llave2}, we get
\begin{equation}\label{eqn:rhomenosmu}
\rho(t)=-\mu(t), \quad \text{if}\quad (i,j)\in C,\quad t\cdot j=i.
\end{equation}
But, as $i\rhd i=i$, we have that $\mu(g_it)=-\rho(g_it)$ and also
\begin{align*}
\mu(g_it)&=\mu(g_i)\mu(t)\overset{\eqref{eqn:roimui}}{=}
-\rho(g_i)\mu(t)=\rho(g_i)\rho(t)=\rho(g_it),
\end{align*}
which is a contradiction.

\end{proof}

Assume $X$ is indecomposable. Next, we describe indecomposable modules which are
sums of two different isotypical components, provided
that \eqref{eqn:chiandrhd1} and \eqref{eqn:c1} hold.

\begin{theorem}\label{teo:1}
Let $\rho\neq\mu\in\widehat{G_{ab}}$. Assume $X$ is indecomposable and both 
\eqref{eqn:chiandrhd1} and \eqref{eqn:c1} hold. Let
$M=M[\rho]\oplus M[\mu]$ be an $\hq$-module, with $\dim M[\rho]$, $\dim
M[\mu]>0$. Then $M$ is not simple. 

Moreover, $M$ is a direct sum of
modules of the form $S_\rho^\gamma$, $S_\mu^\delta$,
$M_{\rho,\mu}^{\gamma',\delta'}$ and $M_{\mu, \rho,}^{\delta'',\gamma''}$ for
various $\gamma,\delta$, $\gamma',\delta'$, $\gamma'',\delta''$.
\end{theorem}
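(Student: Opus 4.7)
The plan is to reduce the structure of $M$ to a pair of linear maps $A\colon V\to W$ and $B\colon W\to V$ on $V:=M[\rho]$, $W:=M[\mu]$, to rule out the ``generic'' case of this pair by applying Lemma \ref{lem:2simple} to a two-dimensional submodule, and then to finish by a direct linear-algebraic decomposition. The opening computation in the proof of Lemma \ref{lem:2simple} requires no simplicity assumption beyond \eqref{eqn:chiandrhd1}: for each $i\in X$, taking $t=g_i$ in $H_t a_i=\chi_i(t)a_{t\cdot i}H_t$ and decomposing $a_iz$ into $\rho$- and $\mu$-isotypic parts forces $a_iV\subseteq W$ and $a_iW\subseteq V$. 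I fix $j\in X$ and set $A:=a_j|_V$, $B:=a_j|_W$; by \eqref{eqn:hq2} every $a_i$ acts on $V$ (resp.\ on $W$) as an explicit scalar multiple of $A$ (resp.\ of $B$). Because \eqref{eqn:chiandrhd1} puts $C_j:=\{(j,j)\}$ in $\mR'$, the relation \eqref{eqn:hq3} for $C_j$ reads $a_j^2=\lambda_{C_j}(1-H_{g_j^2})$; evaluating on $V$ and on $W$ yields
\[
BA=\alpha\,\id_V,\qquad AB=\beta\,\id_W,
\]
with $\alpha:=\lambda_{C_j}(1-\rho(g_j)^2)$ and $\beta:=\lambda_{C_j}(1-\mu(g_j)^2)$.

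The crucial step is to show $\alpha=\beta=0$. Assume $\alpha\neq 0$; then for any $0\neq v\in V$ the two-dimensional subspace $N:=\k\{v,Av\}$ is an $\hq$-submodule of $M$, being $G$-stable, $a_j$-stable (since $a_j(Av)=\alpha v$), and hence $a_i$-stable for every $i$. The only $G$-stable lines in $N$ are $\k v$ and $\k(Av)$, and neither is $a_j$-stable because $Av\neq 0$ and $B(Av)=\alpha v\neq 0$. So $N$ would be a simple $\hq$-module with $\dim N[\rho]=\dim N[\mu]=1$, contradicting Lemma \ref{lem:2simple} (whose hypotheses \eqref{eqn:chiandrhd1} and \eqref{eqn:c1} are in force). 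Hence $\alpha=0$, and the trace identity $\alpha\dim V=\beta\dim W$ then forces $\beta=0$.

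We are thus reduced to $BA=0=AB$, so $\img A\subseteq \ker B$ and $\img B\subseteq \ker A$. I choose splittings $V=V_1'\oplus B(W_2)\oplus V_2$ and $W=W_1'\oplus A(V_2)\oplus W_2$, with $V_2$ a complement of $\ker A$ in $V$, $W_2$ a complement of $\ker B$ in $W$, and $V_1'$, $W_1'$ complements of $B(W_2)$, $A(V_2)$ inside $\ker A$, $\ker B$ respectively. Picking bases of these four pieces produces submodules of the four stated types: each line in $V_1'$ (resp.\ in $W_1'$) is isomorphic to $S_\rho$ (resp.\ to $S_\mu$), each pair $\k\{v,Av\}$ with $v$ in a basis of $V_2$ is isomorphic to $M_{\rho,\mu}^{0,0}$, and each pair $\k\{Bw,w\}$ with $w$ in a basis of $W_2$ is isomorphic to $M_{\mu,\rho}^{0,0}$. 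Their direct sum recovers $M$. Since both $V$ and $W$ are non-zero and every two-dimensional summand is a non-split extension (Lemma \ref{lem:wrhomu}), $M$ cannot be simple. The only delicate point is the exclusion of $\alpha\neq 0$ through Lemma \ref{lem:2simple}; the remaining decomposition is routine linear algebra on $(V,W,A,B)$, and the identification of each summand with the asserted named module is immediate from the definitions.
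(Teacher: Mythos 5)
Your argument is correct and follows essentially the same route as the paper: both reduce, via Lemma \ref{lem:2simple} applied to a two-dimensional submodule $\k\{v,a_jv\}$, to the vanishing of $a_j^2$ on $M$, and then decompose $M$ using the kernel and image of $a_j$ (your $V_1',V_2,W_1',W_2$ correspond to the paper's $J[\rho]$, $T[\rho]$, $J[\mu]$, $T[\mu]$, and your two-dimensional blocks $\k\{v,Av\}$, $\k\{Bw,w\}$ to its $I[\mu]+T[\rho]$ and $I[\rho]+T[\mu]$). The only cosmetic difference is that you obtain $\beta=0$ from $\alpha=0$ via the trace identity $\Tr(BA)=\Tr(AB)$, whereas the paper uses the relation $\rho(g_i)=-\mu(g_i)$ to identify the two scalars.
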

\begin{proof}
Take $0\neq z\in M[\rho]$. As in the first part of the proof of Lemma
\ref{lem:2simple}, it follows from \eqref{eqn:chiandrhd1}
that $\rho\neq\mu$ and that, if $0\neq z\in M[\rho]$, then $a_i z\in
M[\mu]$. Now, $a_i w=a_i^2
z=\lambda_i(1-\rho(g_i)^2)z$, and thus the space $\k\{z,w\}$ is
$a_i$-stable. As $X$ is indecomposable, it follows that this is a
submodule. Let $K=\ker a_i$. Here we see $a_i$ as an operator in $\End
M$. This subspace is $G$-stable: if $u\in
K$, $u=z+w$, with $z\in M[\rho]$, $w\in M[\mu]$, then
$0=a_iu=a_iz+a_iw\Rightarrow z,w\in K$, since $a_i w\in M[\rho]$,
$a_i z\in M[\mu]$. Thus $\rho(t)z=H_tz$ and $\mu(t)w=H_tw\in K$,
$\forall\,t\in G$. Therefore $G\cdot u\subset K$. The same holds for
$I=\img a_i$. Let $T$ be a $G$-submodule such that $M=K\oplus T$
(recall $\k G$ is semisimple). Let
$$
K=\ker a_i=K[\rho]\oplus K[\mu], \quad T=T[\rho]\oplus T[\mu], \quad
I=\img a_i=I[\rho]\oplus I[\mu].
$$
Notice that $K\neq 0$. In fact, if $K=0$, then the space
$\k\{z,w\}$ would be a simple 2-dimensional $\hq$-module, contradicting
Lemma \ref{lem:2simple}. Thus $K\neq 0$. Then $\gamma_i=0$, $\forall\,i\in X$
and
$a_i^2\cdot M=0$. Notice that in this case $I[\psi]\subseteq
K[\psi]$, for $\psi=\rho$ or $\mu$, and thus we have
$K[\psi]=I[\psi]\oplus J[\psi]$. As $G$-modules, we have $$
M_{|G}\cong
\bigoplus\limits_{\psi=\rho,\mu}M[\psi]=\bigoplus\limits_{\psi=\rho,\mu}I[\psi]
\oplus
J[\psi]\oplus T[\psi],$$ and this induces the following decomposition
of $\hq$-modules:
$$
M\cong J[\rho]\oplus J[\mu]\oplus (I[\rho]+T[\mu])\oplus
(I[\mu]+T[\rho]).
$$
Let $\psi=\rho$ or $\mu$. If $J[\psi]\neq 0$, then
\eqref{eqn:tres} holds for $\psi$, and $J[\psi]$ is a sum of 1-dimensional
$\hq$-modules, by Prop. \ref{pro:dims1}. Let
$\{w_1,\dots,w_k\}$ be a basis of $T[\mu]$. Then
$\{a_iw_1,\dots,a_iw_k\}$ is a basis of $I[\rho]$. In fact, if $z\in
I[\rho]$, $z=a_iw$, $w\in T[\mu]$, there are
$\sigma_1,\dots,\sigma_k\in\k$ such that $ w=\sum_{j=1}^k\sigma_jw_j
\ \text{and then } \ z=\sum_{j=1}^k\sigma_ja_iw_j. $ If, on the
other hand, $\{\sigma_j\}_{j=1}^k\in\k$ satisfy
$0=\sum_{j=1}^k\sigma_ja_iw_j$ then $\sum_{j=1}^k\sigma_jw_j\in
K[\mu]$, and as $K\cap T=0$, $\sigma_j=0\,\forall\,j=1,\dots,k$.
Thus $I[\rho]+T[\mu]=\bigoplus_{j=1}^k\langle w_j\rangle$ as
$\hq$-modules. By Lemma \ref{lem:wrhomu}, for each $j=1,\dots,k$ there exists
$\delta_j,\gamma_j\in\k^{*X}$ such that $\langle w_j\rangle\cong
M_{\mu,\rho}^{\delta_j,\gamma_j}$. A similar statement follows for
$I[\mu]+T[\rho]$. Therefore, there are $m_\rho,m_\mu$,
$m_{\rho,\mu},m_{\mu,\rho}\in\mathbb{N}_0$,
$\{\xi_j\}_{j=1}^{m_\rho},\{\pi_j\}_{j=1}^{m_\mu},\{\delta_j\}_{j=1}^{m_{\rho,
\mu
}},\{\gamma_j\}_{j=1}^{m_{\rho,\mu}},\{\sigma_j\}_{j=1}^{m_{\mu,\rho}},$
$\{\tau_j\}_{j=1}^{m_{\mu,\rho}}\in\k^X$ such that
$$
M\cong\bigoplus_{j=1}^{m_\rho} S_\rho^{\xi_j}\oplus \bigoplus_{j=1}^{m_\mu}
S_\rho^{\pi_j}\oplus \bigoplus_{j=1}^{m_{\rho,\mu}}
M_{\mu,\rho}^{\delta_j,\gamma_j}\oplus \bigoplus_{j=1}^{m_{\mu,\rho}}
M_{\mu,\rho}^{\sigma_j,\tau_j},
$$
where $m_\rho$ (resp. $m_\mu$) is non-zero only if \eqref{eqn:tres} holds for
$\rho$ (resp. $\mu$), $\xi_j$, $\pi_j$ and satisfy \eqref{eqn:uno} and
\eqref{eqn:dos} for $\rho,\mu$ respectively. On the other hand,
$m_{\rho,\mu}\neq 0$ only if \eqref{eqn:unob} holds for $\rho,\mu$ and
$\delta_j,\gamma_j$ satisfy \eqref{eqn:dosb}. Similarly for $m_{\mu,\rho}$,
$\sigma_j,\tau_j$.
\end{proof}

\subsection{The case $G=\s_n$, $n\geq 3$}

\

Let $\Lambda,\Gamma,\lambda\in\k$,
$t=(\Lambda,\Gamma)$,
$\iota:\mO_2^n\hookrightarrow\s_n$ the inclusion, $\cdot:\s_n\times
X\to X$ the action given by conjugation, $-1$ the constant cocycle
$q\equiv -1$ and $\chi$ the cocycle given by, if $\tau, \sigma\in \mO_2^n$,
$\tau=(ij)$ and $i<j$:
\begin{align*} & \chi(\sigma, \tau) =
\begin{cases}
  1,  & \mbox{if} \ \sigma(i)<\sigma(j) \\
  -1, & \mbox{if} \ \sigma(i)>\sigma(j),
\end{cases} & & \text{see \cite[Ex. 5.3]{MS}.}
\end{align*}
Then the ql-data:

\begin{itemize}
 \medbreak\item
    $\mQ_n^{-1}[t]=(\s_n,\mO_2^n,-1,\cdot,\iota,\{0,\Lambda,\Gamma\})$,
    $n\geq 4$;

 \medbreak\item
    $\mQ_n^{\chi}[\lambda]=(\s_n,\mO_2^n,\chi,\cdot,\iota,\{0,0,\lambda\})$,
$n\geq 4$;

 \medbreak\item
$\mQ_3^{-1}[\lambda]=(\s_3,\mO_2^3,-1,\cdot,\iota,\{0,\lambda\})$;
\end{itemize}
define pointed Hopf algebras over $\s_n$, for $n$ as appropriate,
\cite{AG2, GG}.
\begin{rem}\label{rem:a}
Notice that the racks $\mO_2^n$, $n\geq 3$ are indecomposable and
that \eqref{eqn:chiandrhd1} is satisfied for both cocycles. In this
case, $\widehat{G_{ab}}=\{\eps,\sg\}$, where $\eps$, resp. $\sg$, stands for the
trivial, resp. sign, representation. In any case, \eqref{eqn:gigj} holds.
Bear also in mind that
$\s_n=\langle\mO_2^n\rangle$. In this case, the function $\ell:G\to\Z$ is
well-known and $\psi:G\to\{\pm 1\}\subset\k^*$ coincides with the
sign function, by \eqref{eqn:chiandrhd1}. Moreover, \eqref{eqn:c1} holds in
all of these ql-data.
\end{rem}
\begin{pro}\label{pro:1sn}
Let $A=\mH(\mQ_n^{-1}[t])$ or $\mH(\mQ_3^{-1}[\lambda])$. Let $M$ be
an $A$-module such that $M_{|\s_n}=M[\eps]\oplus M[\sg]$, $\dim M[\eps]=p$,
$\dim M[\sg]=q$. Then
\begin{enumerate}
  \item $M$ is simple if and only if $M=S_\eps$ or  $M=S_{\sg}$.

  \item $M$ is indecomposable if and only if $M$ is simple or $p=q=1$. In this
last case, there are two non-isomorphic indecomposable modules, namely
  $M_{\eps,\sg}$ and $M_{\sg,\eps}$.
\end{enumerate}
\end{pro}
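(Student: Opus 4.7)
The plan is to reduce to the results of Section 3 via Remark \ref{rem:a}: for $A=\mH(\mQ_n^{-1}[t])$ or $\mH(\mQ_3^{-1}[\lambda])$ the rack $\mO_2^n$ is indecomposable, $\widehat{(\s_n)_{\ab}}=\{\eps,\sg\}$, and conditions \eqref{eqn:chiandrhd1}, \eqref{eqn:gigj} and \eqref{eqn:c1} all hold; moreover the homomorphism $\psi$ of Lemma \ref{lem:wrhomu2} equals $\sg$. I would split the argument into the case where exactly one of $M[\eps], M[\sg]$ is non-zero and the case where both are non-zero, then pull together (i) and (ii).

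In the single-component case $M=M[\rho]$, $\rho\in\{\eps,\sg\}$, Proposition \ref{pro:dims1} together with \eqref{eqn:chiandrhd1} gives $M\cong S_\rho^{\oplus \dim M}$, where Proposition \ref{pro:dim1} combined with \eqref{eqn:gigj} and \eqref{eqn:chiandrhd1} rules out any extension other than $\gamma=0$. Hence such an $M$ is simple iff indecomposable iff $\dim M=1$, in which case $M\in\{S_\eps,S_\sg\}$. This yields the ``if'' direction of (i) and the ``simple'' alternative in (ii).

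In the two-component case $p,q>0$, Theorem \ref{teo:1} says that $M$ is not simple---finishing the ``only if'' direction of (i)---and decomposes as a direct sum of copies of $S_\eps^\xi$, $S_\sg^\pi$, $M_{\eps,\sg}^{\gamma,\delta}$ and $M_{\sg,\eps}^{\sigma,\tau}$. By the single-component analysis the 1-dimensional summands are forced to be $S_\eps$ or $S_\sg$ (i.e.\ $\xi=\pi=0$); since $\mO_2^n$ is indecomposable, Remark \ref{rem:wrhomu} and the definition following it then force each mixed summand to carry $\gamma=\delta=0$ and hence equal $M_{\eps,\sg}=M_{\eps,\sg}^{0,0}$ or $M_{\sg,\eps}=M_{\sg,\eps}^{0,0}$. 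If $M$ is in addition indecomposable, the decomposition reduces to a single summand, which must be mixed because $p,q>0$; this forces $p=q=1$ and $M\in\{M_{\eps,\sg},M_{\sg,\eps}\}$.

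It remains to verify that $M_{\eps,\sg}$ and $M_{\sg,\eps}$ actually exist and are non-isomorphic. Existence is equivalent to non-vanishing of the corresponding $\ext^1$: by Lemma \ref{lem:wrhomu2} the necessary condition $\mu=\psi\rho$ becomes $\mu=\sg\rho$, which is satisfied by both $(\rho,\mu)=(\eps,\sg)$ and $(\sg,\eps)$; with $\gamma=\delta=0$ equation \eqref{eqn:dosbprima} is tautological, and \eqref{eqn:unob} admits the non-zero solution $(f_i)_{i\in X}=f_j(\chi_j(t_i)\sg(t_i))_{i\in X}$ from Remark \ref{rem:wrhomu} (taking $f_j=1$). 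Non-isomorphism is immediate because $S_\eps$ sits as a submodule of $M_{\sg,\eps}$ but only as a quotient of $M_{\eps,\sg}$. I expect the only mildly subtle point to be the well-definedness of the explicit $(f_i)$ under the choice of $t_i$ satisfying $t_i\cdot j=i$, but this is built into Remark \ref{rem:wrhomu} via the cocycle identity for $\chi_j$ and the fact that $\chi_k(s)\in\{\pm 1\}$ throughout our setting.
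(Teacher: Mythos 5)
Your proposal is correct and follows essentially the same route as the paper: the paper's proof is a two-line citation of Propositions \ref{pro:dim1} and \ref{pro:dims1}, Lemma \ref{lem:2simple} (which you invoke implicitly through Theorem \ref{teo:1}), Theorem \ref{teo:1} and Lemma \ref{lem:wrhomu2}, and you simply spell out the details, including the easy verification (constant $f_i$, since $\chi_i=\sg$ here) that the two mixed extensions exist and are non-isomorphic.
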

\begin{proof}
It follows by
Props. \ref{pro:dim1} and \ref{pro:dims1}, and by Lemma \ref{lem:2simple} that
$S_\eps$ and $S_{\sg}$ are the unique two simple modules. The second item
follows by Thm. \ref{teo:1} and Lemma \ref{lem:wrhomu2}.
\end{proof}

\begin{pro}
Let  $n\geq 4$. Let $M$ be a
$\mH(\mQ_n^{\chi}[\lambda])$-module such that $M_{|\s_n}=M[\eps]\oplus
M[\sg]$, with $\dim M[\eps]=p$, $\dim
M[\eps]=q$, $p,q\geq 0$. Then
$M$  is indecomposable if and only if it is simple if and only if $M=S_\eps$ or $M=S_{\sg}$.

\end{pro}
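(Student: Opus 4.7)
The plan is to run the same program as Proposition \ref{pro:1sn}, and pinpoint where the $\chi$-cocycle behaves differently from the constant $-1$-cocycle. By Remark \ref{rem:a}, the hypotheses \eqref{eqn:chiandrhd1}, \eqref{eqn:gigj} and \eqref{eqn:c1} all hold for $\mQ_n^{\chi}[\lambda]$ with $n\geq 4$, so Propositions \ref{pro:dim1}, \ref{pro:dims1}, Lemma \ref{lem:2simple} and Theorem \ref{teo:1} are all available. From Proposition \ref{pro:dim1} combined with \eqref{eqn:chiandrhd} we first conclude that the only 1-dimensional representations extending $\eps$ and $\sg$ are $S_\eps$ and $S_\sg$ (all the $\gamma_i$ must vanish); from Lemma \ref{lem:2simple} we rule out simple modules of dimension $2$ with $\dim M[\eps]=\dim M[\sg]=1$.

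Next, apply Theorem \ref{teo:1} to decompose an arbitrary $M$ of the given form into summands $S_\eps^\gamma$, $S_\sg^\delta$, $M_{\eps,\sg}^{\gamma',\delta'}$ and $M_{\sg,\eps}^{\gamma'',\delta''}$. Invoking \eqref{eqn:chiandrhd1} one more time, every collection of scalars appearing as a superscript must be $0$, so the summands reduce to $S_\eps$, $S_\sg$, $M_{\eps,\sg}$ and $M_{\sg,\eps}$. Thus the whole statement is reduced to showing that neither $M_{\eps,\sg}$ nor $M_{\sg,\eps}$ exists when one works with $\mQ_n^{\chi}[\lambda]$ and $n\ge 4$.

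By Remark \ref{rem:wrhomu} the existence of $M_{\rho,\mu}$ (with $\gamma=\delta=0$) is equivalent to the existence of a nonzero solution $(f_i)_{i\in\mO_2^n}$ of \eqref{eqn:unob}, and Lemma \ref{lem:wrhomu2} forces $\mu=\sg\,\rho$, which is automatic for $\{\rho,\mu\}=\{\eps,\sg\}$. The key and only non-routine step is now to show $V=0$. Since $n\ge 4$, given any $i=(kl)\in\mO_2^n$ we may choose $j\in\mO_2^n$ with support disjoint from $\{k,l\}$. Then $g_j\cdot i=i$, and the defining formula of $\chi$ gives $\chi_i(g_j)=\chi(j,i)=1$ because $j$ fixes both $k$ and $l$, so $j(k)=k<l=j(l)$. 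Specializing \eqref{eqn:unob} to $t=g_j$ yields $-f_i=f_i$, hence $f_i=0$; since $\mO_2^n$ is a single $\s_n$-orbit, \eqref{eqn:unob} then propagates this to $f\equiv 0$.

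Putting everything together, any such $M$ is a direct sum of copies of $S_\eps$ and $S_\sg$, so it is indecomposable if and only if it is simple if and only if $M=S_\eps$ or $M=S_\sg$. The main (and essentially only) obstacle is the last computation: one must use precisely the fact that $n\geq 4$ to exhibit commuting transpositions, and combine this with the sign-structure of $\chi$ on such pairs; this is exactly where the $\chi$-case diverges from the $-1$-case treated in Proposition \ref{pro:1sn}, where no such obstruction is present and the mixed modules $M_{\eps,\sg}$, $M_{\sg,\eps}$ do survive.
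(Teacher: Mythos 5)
Your proof is correct and follows essentially the same route as the paper: reduce to the simple modules via Propositions \ref{pro:dim1}, \ref{pro:dims1} and Lemma \ref{lem:2simple}, decompose $M$ via Theorem \ref{teo:1}, and then kill the mixed summands $M_{\eps,\sg}$, $M_{\sg,\eps}$ using Lemma \ref{lem:wrhomu2}. The only difference is that you make explicit the computation (commuting transpositions for $n\geq 4$ forcing $f_i=-f_i$) that the paper leaves implicit in its citation of Lemma \ref{lem:wrhomu2}, and that computation is accurate.
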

\begin{proof}
The determination of the simple modules follows from Props. \ref{pro:dim1} and \ref{pro:dims1} and Lemma
\ref{lem:2simple}. By Lemma
\ref{lem:wrhomu2} there are no extensions between 1-dimensional modules.
Hence, the Prop. follows from Thm.
\ref{teo:1}.
\end{proof}

\section{General facts}

Let $H$ be a Hopf algebra, $V\in\ydh$. The Nichols algebra $\B(V)=\oplus_{n\geq 0}\B^n(V)$ is a
graded braided Hopf algebra in $\ydh$ generated by $V$, in such a way that
$V=\B^1(V)=\mP(\B(V))$, that is, it is generated in degree one by its primitive
elements which in turn coincide with the module $V$. This algebra is uniquely
determined, up to isomorphism. See \cite{AS} for details.

\medbreak

Let $G$ be a finite group. Let $X$ be a rack, $q$ a 2-cocycle and assume
that there
exists a YD-realization of $(X,q)$ over $G$. We denote by $\B(X,q)$ the
corresponding Nichols algebra.

\subsection{Simple modules over bosonizations}

\

Consider the bosonization $\mA=\B(X,q)\sharp\k G$.
As an algebra, $\mA$ is generated by
$\B(X,q)$ and $\k G$; the product is defined by $(a\sharp
t)(b\sharp s)=a(t\cdot b)\sharp ts$, here $\cdot$ stands for the action in
$_G^G\yd$. See \cite[2.5]{AS} for details. In what follows, we shall assume that
$\B(X,q)$, and thus $\mA$, is finite dimensional.
The following proposition is well-known. We state it and prove it here for
the sake of completeness.

\begin{pro}\label{pro:bxqsimple}
The simple modules for $\mA$ are in bijective correspondence
with the simple modules over $G$: Given $\rho\in\widehat G$, $S_\rho$ is the
$\mA$-module such that
$$
S_\rho\cong W_\rho \text{ as } G\text{-modules,} \quad\text{and  }\quad  a_iS_\rho=0, \quad \forall\,i\in X .
$$
This correspondence preserves tensor products and duals.
\end{pro}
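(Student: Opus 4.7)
The plan is to exhibit the Jacobson radical of $\mA$ explicitly and reduce to the semisimple quotient $\k G$. The augmentation $\eps:\B(X,q)\to\k$ is a morphism of Hopf algebras in $\ydg$ and hence induces a Hopf algebra projection $\pi:\mA\to\k G$, $b\sharp t\mapsto \eps(b)t$, whose kernel is the two-sided ideal $J=\B^+(X,q)\sharp\k G$, where $\B^+(X,q)=\bigoplus_{n\geq 1}\B^n(X,q)$. Since $\B(X,q)$ is finite-dimensional and graded connected, $\B^+(X,q)$ is nilpotent; as $G$ acts preserving the grading, one has $J^N=(\B^+(X,q))^N\sharp\k G=0$ for $N\gg 0$, so $J$ is a nilpotent ideal of $\mA$.

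Since $\k G$ is semisimple in characteristic zero, the isomorphism $\mA/J\cong \k G$ together with the nilpotence of $J$ forces $J=\mathrm{rad}(\mA)$. Consequently, the simple $\mA$-modules are exactly the pullbacks along $\pi$ of the simple $\k G$-modules. For $\rho\in\widehat{G}$, the pullback of $W_\rho$ has underlying $G$-module $W_\rho$ and satisfies $a_i\cdot W_\rho=0$ for all $i\in X$ because $a_i\in J=\ker\pi$; this is precisely the module $S_\rho$ in the statement, and the assignment $\rho\mapsto S_\rho$ is clearly bijective.

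For the compatibility with tensor products and duals, I would invoke that $\pi$ is a morphism of Hopf algebras, so pullback along $\pi$ is a tensor functor that commutes with duals. Concretely, using $\Delta(a_i)=g_i\otimes a_i+a_i\otimes 1$ one checks $a_i\cdot(S_\rho\otimes S_\mu)=0$, and analogously for $S_\rho^*$ via $\mathcal{S}(a_i)$; this identifies $S_\rho\otimes S_\mu$ and $S_\rho^*$ as the pullbacks of $W_\rho\otimes W_\mu$ and $W_\rho^*$, respectively, as required. No step here is deep; the only things to verify with some care are that $J$ is two-sided (immediate from the smash-product formula and $G$-stability of $\B^+(X,q)$) and that it is nilpotent, which is where the finite-dimensionality of $\B(X,q)$ enters.
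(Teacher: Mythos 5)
Your proof is correct and follows essentially the same route as the paper: both identify the Jacobson radical as $\B(X,q)^+\sharp\k G$ (nilpotent because the Nichols algebra is graded, connected and finite dimensional, with semisimple quotient $\k G$), deduce that the simples are the pullbacks of the $W_\rho$, and verify compatibility with tensor products and duals via $\Delta(a_i)=g_i\ot a_i+a_i\ot 1$ and $\mathcal{S}(a_i)=-H_{g_i}^{-1}a_i$. Your write-up is in fact slightly more detailed on why $J$ is two-sided and nilpotent, but there is no substantive difference.
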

\begin{proof}
With the action stated above, it is clear that for each $\rho\in\widehat G$,
$S_\rho$ is an $\mA$-module. If
$\B(X,q)^+$ denotes the maximal graded ideal of $\B(X,q)$, then the
Jacobson radical $J=J(\mA)$ is given by $J=\B(X,q)^+\sharp\k G$. In fact $J$ is
a maximal nilpotent ideal (since $\mA$ is graded and finite dimensional) and
$\mA/J\cong \k G$ is semisimple. This also shows that the list
$\{S_\rho:\rho\in\widehat G\}$ is an exhaustive list of $\B(X,q)$-modules, which
are obviously
pairwise non-isomorphic. The last assertion follows since $a_i
\left(S_\rho\ot S_\mu\right)=0$ and $\mathcal{S}(a_i)=-H_{g_i}^{-1}a_i$.
\end{proof}

\subsection{Projective covers of modules over quadratic liftings}

\

Let $B$ be a ring, $M$ a left $B$-module. A \emph{projective cover} of
$M$ is a pair $(P(M),f)$ with $P=P(M)$ a projective $B$-module and
$f:P\to M$ an\emph{ essential map}, that is $f$ is surjective and
for every
 $N\subset M$ proper submodule, $f(N)\neq M$. We will not explicit the map $f$
when it is obvious. Projective covers
are unique up to isomorphism and
always exist for finite-dimensional $\k$-algebras, see \cite[Sect.
6]{CR}. Moreover,
 \begin{equation}\label{eqn:proj}
_BB\cong \bigoplus\limits_{S\in\widehat{B}}P(S)^{\dim S}.
\end{equation}

Fix $G$ a finite group and $H$ a pointed Hopf algebra over $G$. Let $\{e_i\}_{i=1}^N$ be a complete set of orthogonal primitive idempotents for $G$ and set $I_j=He_j$, for $1\leq j\leq N$. 

\begin{lem}
$I_j=\ind_{\k G}^H\k Ge_j$. In particular, if $\k Ge_j\cong \k Ge_h$ as $G$-modules, then $I_j\cong I_h$ as $H$-modules. 

Moreover, $H\cong\bigoplus_{\rho\in\widehat{G}}I_\rho^{\dim \rho}$ as
$H$-modules, where $I_\rho=\ind_{\k G}^H W_\rho$, and thus $I_\rho$ is a
projective $H$-module.
\end{lem}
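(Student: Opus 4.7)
The plan is to first identify $I_j$ with an induced module by exploiting the freeness of $H$ over $\k G$, and then to combine this with the regular representation decomposition of $\k G$ to obtain the global sum.

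First I would recall that since $H$ is pointed with coradical $\k G$ and $\k G$ is a Hopf subalgebra, the Nichols--Zoeller freeness theorem implies that $H$ is free as a right $\k G$-module. Fixing a right $\k G$-basis $\{u_k\}_{k=1}^{r}$ of $H$, with $r=\dim H/|G|$, the natural multiplication map
\begin{equation*}
H\otimes_{\k G}\k Ge_j\longrightarrow He_j,\qquad h\otimes x\longmapsto hx,
\end{equation*}
is a well-defined morphism of left $H$-modules. Since $H\otimes_{\k G}\k Ge_j=\bigoplus_{k} u_k\otimes \k Ge_j$ maps bijectively onto $\bigoplus_{k} u_k\k Ge_j=He_j$, this map is an isomorphism. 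Hence $I_j\cong \ind_{\k G}^H\k Ge_j$. The second claim in the first sentence is then immediate because $\ind_{\k G}^H$ is a functor.

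For the second assertion, I would use the decomposition of the regular representation $\k G\cong\bigoplus_{\rho\in\widehat G}W_\rho^{\dim \rho}$ as left $\k G$-modules. This means that the orthogonal primitive idempotents $\{e_j\}_{j=1}^N$ can be grouped so that, for each $\rho\in\widehat G$, there are exactly $\dim\rho$ indices $j$ with $\k Ge_j\cong W_\rho$. Using $1=\sum_j e_j$ together with the first part of the lemma, we obtain
\begin{equation*}
H=\bigoplus_{j=1}^{N} He_j\cong\bigoplus_{\rho\in\widehat G}I_\rho^{\dim\rho}.
\end{equation*}
Since ${}_HH$ is free, hence projective, as a left $H$-module, and $I_\rho$ appears as a direct summand, $I_\rho$ is a projective $H$-module.

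The main technical input is the Nichols--Zoeller freeness of $H$ over $\k G$; once this is in hand, the rest is a routine application of induction and the decomposition of the regular representation of $G$. I do not foresee any further obstacle.
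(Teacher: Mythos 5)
Your proof is correct and follows essentially the same route as the paper's: identify $He_j$ with the induced module via the multiplication map $H\ot_{\k G}\k Ge_j\to H$, then sum over a complete set of orthogonal primitive idempotents using the decomposition of the regular representation of $\k G$. The only difference is that you justify injectivity via Nichols--Zoeller freeness, which is more than is needed here: since $e_j$ is idempotent, $\k Ge_j$ is a $\k G$-module direct summand of $\k G$, so $H\ot_{\k G}\k Ge_j$ is automatically a direct summand of $H\ot_{\k G}\k G\cong H$, namely $He_j$.
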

\begin{proof}
Let $\psi:\ind_{\k G}^H\k Ge_j\to H$ be the composition of the multiplication
$m:H\ot_{\k G}\k G\to H$ with the inclusion $H\ot_{\k G}\k Ge_j\to H\ot_{\k G}\k
G$. It follows that $\img\psi=I_j$. Then $I_j=\ind_{\k G}^H\k Ge_j$ and $I_j$
does not depend on the idempotent $e_j$ but on the simple module $W_\rho=\k
Ge_j$. Therefore, as $\k G=\oplus_{i=1}^N \k G e_i$, we have that
$H\cong\bigoplus_{\rho\in\widehat{G}}I_\rho^{\dim \rho}$.
\end{proof}

Let $\{H_n\}_{n\in\N}$ be the coradical filtration of $H$, $$\gr^nH=H_n/H_{n-1}, \quad 
\gr H=\oplus_{n\geq 0} \gr^nH.$$ We know that there exists $R\in\ydg$ such that
$\gr H\cong R\sharp \k G$, see \cite[2.7]{AS}. Let $\pi_n:H_n\to \gr^nH$
be the canonical projection. As every $H_n$ is $\ad(G)$-stable, it follows that
$\pi_n$ is a morphism of $G$-modules. Therefore there exists a section
$\gr^nH\to H_n$ and $H_n\cong \gr^nH\oplus H_{n-1}$ as $G$-modules. By an
inductive argument we have that $H_n\cong \gr^nH\oplus
\gr^{n-1} H\oplus\dots\oplus \gr^0 H$. And thus it follows that $H\cong \gr H$
as $G$-modules. Moreover, it follows that, if we consider the adjoint action on
$\k G$, $\gr H\cong R\ot\k G$ as $G$-modules, via the diagonal action. Thus,
$H\cong R\ot\k G$ as $G$-modules.

\begin{pro}\label{pro:proj}
Let $\gr H=R\sharp\k G$.
\begin{enumerate}
 \item $I_\eps\cong R$ as $G$-modules.
\item Assume there exists a simple $H$-module $M$ such that $M_{|\k G}$ is a
simple $G$-module $W_\rho$. Then $P(M)$ is a direct summand of $I_\rho$. In
particular, if $I_\rho$ is indecomposable, then $I_\rho\cong P(M)$.
 \item If $H=R\sharp\k G$, $I_\rho$ is the projective cover of $S_\rho$, see
Prop. \ref{pro:bxqsimple}.
 \end{enumerate}
 \end{pro}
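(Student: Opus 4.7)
For part (i), I will identify $I_\eps = \ind_{\k G}^H \k$ with the quotient $H/H(\k G)^+$, where $(\k G)^+$ is the augmentation ideal. Since $\k = W_\eps$ is the trivial $G$-module, right multiplication by any $t \in G$ acts as the identity on this quotient, so the left $G$-action (by left multiplication) coincides with the conjugation action. Using the $G$-module isomorphism $H \cong R \otimes \k G$ (under conjugation, with the diagonal action on the tensor) established right before the proposition, I compute the quotient as $R \otimes (\k G/(\k G)^+) \cong R$, and the induced $G$-module structure is the original YD-action on $R$.

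For part (ii), I first note that $I_\rho = \ind_{\k G}^H W_\rho$ is projective: $\k G$ is semisimple so $W_\rho$ is a direct summand of $\k G$, and by the lemma above $H$ is free over $\k G$, so $I_\rho$ is a direct summand of $H \otimes_{\k G} \k G = H$. By Frobenius reciprocity,
\[
\Hom_H(I_\rho, M) \cong \Hom_{\k G}(W_\rho, M_{|\k G}) \cong \Hom_{\k G}(W_\rho, W_\rho) = \k,
\]
yielding a nonzero map $f\colon I_\rho \to M$, which is surjective because $M$ is simple. The standard projective-cover argument then gives the splitting: lifting the essential map $p \colon P(M) \to M$ through $f$ produces $g \colon P(M) \to I_\rho$ with $fg = p$, and lifting $f$ through $p$ (using projectivity of $I_\rho$) produces $h \colon I_\rho \to P(M)$ with $ph = f$. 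Then $p(hg) = p$, so $hg$ is surjective because $p$ is essential, hence an isomorphism in finite dimension; thus $g$ splits and $I_\rho \cong P(M) \oplus Q$. When $I_\rho$ is indecomposable, $Q = 0$.

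For part (iii), when $H = R \sharp \k G$, Prop. \ref{pro:bxqsimple} identifies the simple $H$-modules as $\{S_\rho : \rho \in \widehat G\}$ with $(S_\rho)_{|\k G} \cong W_\rho$, so part (ii) applies and $P(S_\rho)$ is a direct summand of $I_\rho$, giving $\dim P(S_\rho) \leq \dim I_\rho$. Combining \eqref{eqn:proj} with the lemma preceding the proposition,
\[
\sum_{\rho \in \widehat G} (\dim \rho)(\dim P(S_\rho)) = \dim H = \sum_{\rho \in \widehat G} (\dim \rho)(\dim I_\rho),
\]
so the pointwise inequalities are equalities, and $P(S_\rho) = I_\rho$.

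The main subtlety lies in part (i): one must verify that the $G$-module isomorphism $H \cong R \otimes \k G$ is compatible with the right $\k G$-action enough to identify $H(\k G)^+$ with $R \otimes (\k G)^+$. This is not explicit in the preceding discussion, and one addresses it either by choosing a $G$-equivariant section of the coradical filtration at each graded piece (which preserves the right $\k G$-coaction up to lower order terms), or by first computing the quotient in $\gr H = R \sharp \k G$, where the identification is tautological, and then lifting using semisimplicity of $\k G$.
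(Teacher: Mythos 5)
Your proof is correct and follows essentially the same route as the paper: part (i) via the $G$-module isomorphism $H\cong R\ot\k G$, part (ii) via the standard lifting argument through the essential surjection $P(M)\to M$ and a surjection $I_\rho\to M$ (the paper constructs it from the action map, you from Frobenius reciprocity — the same map), and part (iii) via the identical dimension count using \eqref{eqn:proj}. Your closing remark on the compatibility of the section with the right $\k G$-structure in (i) is a fair point that the paper's proof passes over silently, but it does not change the argument.
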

\begin{proof}
Let $W_\eps$ be the trivial $G$-module. Since $I_\eps=\ind_{\k G}^H W_\eps$ and $H\cong R\ot \k G$, we have
$$
(I_\eps)_{|G}\cong ((R\ot\k G)\ot_{\k G} W_\eps)_{|G}\cong R_{|G}.
$$
Thus the first item follows. Let now $M$ be an $H$-module such that $M_{|\k
G}=W_\rho$. If $(P(M),f)$ is the projective cover of
$M$, we have the commutative diagram:
\begin{equation*}
 \xymatrix{
  & & I_\rho\ar@{->>}[d]^\pi\ar@{-->}[lld]_{\tau}\\
  P(M)\ar@{->>}[rr]^{f} &  & M}
\end{equation*}
where $\pi:I_\rho\to M$ is the factorization of the action $\cdot:H\ot M\to M$
through $H\ot M\twoheadrightarrow I_\rho=H\ot_{\k G} W_\rho$.
As $f(\tau(I_\rho))=\pi(I_\rho)=M$ and $f$ is essential, we have
an epimorphism $I_\rho\twoheadrightarrow P(M)$ and $P(M)$ is a direct summand of
$I_\rho$. Thus $I_\rho\cong P(M)$,
if $I_\rho$ is assumed to be indecomposable.

Finally, assume $H=R\sharp\k G$. If $P(S_\rho)$ is
the projective cover of $S_\rho$, we must have $\dim P(S_\rho)\leq \dim
I_\rho=\dim R\dim W_\rho$. But we
see that this is in fact an equality from the formulas:
\begin{align*}
&\dim H=\dim R\sum_{\rho\in\widehat G}\dim
W_\rho^2=\sum_{\rho\in  \widehat G}(\dim R\dim W_\rho)\dim W_\rho\\
& \dim H=\sum_{\rho\in  \widehat G}\dim P(S_\rho)\dim S_\rho=\sum_{\rho\in 
\widehat
G}\dim P(S_\rho)\dim W_\rho.
\end{align*}
\end{proof}

\subsection{Representation type}

\

We comment on some general facts about the representation
type of a finite dimensional algebra, that will be employed in \ref{subsub:reprtypea0} and \ref{subsub:reprtypea1}. Let $B$ be a finite dimensional $\k$-algebra, $\widehat{B}=\{S_1,\ldots,S_n\}$ a
complete list of non-isomorphic simple $B$-modules. The \emph{Ext-Quiver} (also
\emph{Gabriel quiver}) of $B$ is the quiver $ExtQ(B)$ with vertices
$\{1,\ldots,n\}$
and $\dim\ext^1_B(S_i,S_j)$ arrows from the vertex $i$ to the vertex
$j$. Then $B$ is Morita equivalent to the basic algebra $\k
ExtQ(B)/I(B)$, where $\k ExtQ(B)$ is the path algebra of the quiver $ExtQ(B)$
and $I(B)$ is an ideal contained in the bi-ideal of paths of length
greater than one. Recall that for any two $B$ modules $M_1,M_2$
there is an isomorphism of abelian groups
$$
\ext_B^1(M_1,M_2)=\{\text{equivalence classes of extensions of
}M_1\text{ by }M_2\},
$$
where the element $0$ is given by the trivial extension $M_1\oplus
M_2$.

Given a quiver $Q$ with vertices $V=\{1,\ldots, n\}$, its
\emph{separation diagram} is the unoriented graph with vertices
$\{1',\ldots,n',1'',\ldots, n''\}$ and with an edge $i'$---$j''$ for
each arrow $i\to j$ in $Q$. If $B$ is algebra, we speak of the
separation diagram of $B$ referring to the separation diagram of its
Ext-Quiver.
\begin{theorem}\cite[Th. 2.6]{ARS}\label{teo:aus}
  Let $B$ be an Artin algebra with radical square zero. Then $B$ is
  of finite (tame) representation type if and only if its separated
  diagram is a disjoint union of finite (affine) Dynkin
  diagrams.\qed
\end{theorem}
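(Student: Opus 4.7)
The statement is quoted from \cite{ARS}, so my ``proof'' is really a reconstruction plan. The strategy is to reduce to the hereditary case via the classical separation construction and then invoke Gabriel's theorem together with its tame analogue.

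First I would set $B_0 = B/J$, where $J = \rad B$, and regard $J$ as a $B_0$-bimodule (this uses $J^2 = 0$, so that right and left multiplications factor through $B_0$). Form the triangular matrix algebra
\[
\Lambda = \begin{pmatrix} B_0 & 0 \\ J & B_0 \end{pmatrix}.
\]
A direct check shows $\Lambda$ is hereditary: its radical sits in the lower-left corner and is projective as a right $\Lambda$-module. Computing $\ext^1_\Lambda$ between the canonical simple $\Lambda$-modules $S_i'$ and $S_j''$ (coming from the two diagonal copies of $B_0$) one sees that $\dim\ext^1_\Lambda(S_i',S_j'')$ equals the multiplicity of $S_j$ in $JS_i$ viewed as a left $B_0$-module, which is precisely the number of arrows from $i$ to $j$ in $ExtQ(B)$. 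Hence the Ext-quiver of $\Lambda$ is exactly the separated diagram of $B$.

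Next I would construct a functor $\Phi\colon \mo B \to \mo \Lambda$ sending a $B$-module $M$ to the pair $(M/JM,\, JM)$ equipped with the structural map $J \otimes_{B_0} M/JM \to JM$ induced by the action of $J$ on $M$ (which is well-defined because $J^2=0$). This functor is exact, faithful, and sends indecomposable $B$-modules to indecomposable $\Lambda$-modules; the indecomposables of $\Lambda$ missed by $\Phi$ are exactly the simple projective-injective $\Lambda$-modules sitting at vertices of the separated diagram that are isolated. Therefore $\mo B$ has finitely many (resp.\ only one-parameter families of) indecomposables if and only if $\mo \Lambda$ does.

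Finally, Gabriel's theorem classifies finite-representation-type hereditary Artin algebras by disjoint unions of Dynkin diagrams $A_n, D_n, E_{6,7,8}$, and the Nazarova--Donovan--Freislich theorem handles the tame case via affine Dynkin diagrams $\tilde A_n, \tilde D_n, \tilde E_{6,7,8}$. Applying these to $\Lambda$ and translating via the identification of Ext-quivers gives the statement. The main obstacle is checking that $\Phi$ actually transfers representation type in both directions — concretely, that the ``extra'' indecomposable $\Lambda$-modules not hit by $\Phi$ form only a finite collection (the simple ones at isolated vertices), so that finiteness and tameness are genuinely equivalent between the two categories. This is the bookkeeping carried out in the proof of \cite[X.2.6]{ARS}.
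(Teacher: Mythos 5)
The paper does not actually prove Theorem \ref{teo:aus}; it is quoted from \cite[Th. 2.6]{ARS} with no argument, so the only thing to compare your plan against is the original proof in that reference. Your overall strategy is indeed the one used there: pass to the hereditary triangular matrix algebra $\Lambda=\left(\begin{smallmatrix} B_0 & 0\\ J & B_0\end{smallmatrix}\right)$, identify its Ext-quiver with the separated diagram of $B$ (your computation is correct up to the slip that ``$JS_i$'' should be $Je_i$, the radical of the projective cover of $S_i$ --- $J$ annihilates every simple), transfer representation type, and invoke the classification of hereditary algebras of finite and tame type. Those bookends are fine.

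The gap is in the transfer step: the properties you attribute to $\Phi(M)=(M/JM,\,JM)$ are false. It is not exact ($M\mapsto M/JM$ is only right exact and $M\mapsto JM$ is not even half exact), and it is not faithful: for $B=\k[x]/(x^2)$ the nonzero endomorphism of $B$ given by multiplication by $x$ lands in $JB$ and kills $JB$, so $\Phi$ sends it to zero. Since your justification that $\Phi$ preserves indecomposables and transfers representation type rests on these properties, the middle of the argument does not stand as written. The conclusion you want is nevertheless true, but it has to be obtained differently: either directly, by using semisimplicity of $B_0$ to split $M\to M/JM$ and check that a decomposition of $(M/JM,JM,\eta)$ lifts to one of $M$ (giving a bijection on isomorphism classes of indecomposables away from finitely many exceptions), or, as in \cite{ARS}, by showing that $B$ and $\Lambda$ are \emph{stably equivalent}, which yields a bijection between non-projective indecomposables. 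Your description of the exceptional modules is also wrong: the indecomposable $\Lambda$-modules not accounted for are the simple projectives $(0,S_j)$ at \emph{every} sink vertex $j''$ of the separated quiver, not only at isolated vertices (already for $B=\k[x]/(x^2)$ the missed module $S_{1''}$ sits at a non-isolated vertex); they are projective because $j''$ is a sink, but injective only when $j''$ is isolated. Since in all cases only finitely many indecomposables on either side are unmatched, the equivalence of finiteness and of tameness still follows once the transfer step is repaired.
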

\begin{lem}\label{lem:extq}
  Let $J$ be the radical of $B$. Then $ExtQ(B)=ExtQ(B/J^2)$.
\end{lem}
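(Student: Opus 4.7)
The plan is to show both that the vertex sets of $ExtQ(B)$ and $ExtQ(B/J^2)$ coincide, and that for each pair of simple modules $S_i,S_j$ the spaces $\ext^1_B(S_i,S_j)$ and $\ext^1_{B/J^2}(S_i,S_j)$ have the same dimension.

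First I would observe that, since $J^2\subseteq J$ and $J/J^2$ is the Jacobson radical of $B/J^2$, the canonical surjections $B\twoheadrightarrow B/J^2\twoheadrightarrow B/J$ induce identifications of the respective semisimple quotients. Hence the sets of isomorphism classes of simple $B$-modules, simple $B/J^2$-modules, and simple $B/J$-modules are in natural bijection. This fixes a common vertex set $\{S_1,\dots,S_n\}$ for both quivers.

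Next I would compare the arrow counts by identifying extensions. Let $\xi\colon 0\to S_j\to M\to S_i\to 0$ be an extension of $B$-modules. Since $S_i$ is simple, $JM$ is contained in the image of $S_j$ in $M$; since $S_j$ is also simple, $J^2M\subseteq J\cdot S_j=0$. Thus $M$ is naturally a $B/J^2$-module, and the extension $\xi$ descends to an extension of $S_i$ by $S_j$ in the category of $B/J^2$-modules. Conversely, any $B/J^2$-extension of $S_i$ by $S_j$ pulls back to a $B$-extension via $B\twoheadrightarrow B/J^2$. These constructions are mutually inverse and preserve equivalence of extensions, so they give an isomorphism
\[
\ext^1_B(S_i,S_j)\;\cong\;\ext^1_{B/J^2}(S_i,S_j).
\]

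Taking dimensions, the number of arrows from $i$ to $j$ in $ExtQ(B)$ coincides with the number of arrows in $ExtQ(B/J^2)$, yielding the equality of quivers. No real obstacle arises here; the only point requiring a little care is verifying that equivalences of extensions are respected under the identification, which is immediate because the module structures on $M$ as a $B$-module and as a $B/J^2$-module agree through the projection $B\twoheadrightarrow B/J^2$.
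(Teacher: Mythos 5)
Your proof is correct and follows essentially the same route as the paper: both arguments identify the simple modules of $B$ and $B/J^2$, and then show that any $B$-extension of one simple by another is annihilated by $J^2$ (since $J$ kills each simple factor), so it is already a $B/J^2$-extension, giving the identification of the $\ext^1$ spaces. No issues.
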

\begin{proof}
First, it is immediate that $\widehat{B}=\widehat{B/J^2}$. Let
$S,T\in\widehat{B}$. As any $B/J^2$-module is a $B$-module, we have
$\ext_{B/J^2}^1(S,T)\subseteq\ext_B^1(S,T)$. Now, let
$$
0\to T\hookrightarrow V\twoheadrightarrow S\to 0\in B-\mo,\quad x\in
V, a_1,a_2,\in J.
$$
If $x\in T\subset V$, then $a_1x=0\Rightarrow a_2a_1x=0$. If
$x\notin T$, then $0\neq\bar{x}\in V/T\cong S$ and thus
$a_1\bar{x}=0$, that is $a_1x\in T$, and therefore $a_2a_1x=0$.
Thus, the above exact sequence in $B-\mo$ gives rise to an exact
sequence in $B/J^2-\mo$, proving the lemma.
\end{proof}

\section{Representation theory of pointed Hopf algebras over $\s_3$}

In this Section we investigate the representations of the finite
dimensional pointed Hopf algebras over $\s_3$. We will denote by
$\mA_\lambda$, $\lambda\in\k$, the algebra $\mH((\mQ_3^{-1}[\lambda]))$. This
algebra
was introduced in \cite{AG1}. Explicitly,
it is generated by elements $H_t$, $a_i$,
$t,i\in\mO_2^3$; with relations
\begin{align*}
H_tH_sH_t&=H_sH_tH_s, \ H_t^2=1, & s\neq t\in \mO_2^3;\\
H_ta_i &= -a_{t\sigma i}H_t, & t,i\in\mO_2^3;\\
 a_{12}^2&=0, & \\
a_{12}a_{23}+a_{23}a_{13}+a_{13}a_{12}&=\lambda(1-H_{12}H_{23}).
\end{align*}
$\mA_\lambda$ is a Hopf algebra of dimension $72$. If $H$ is a
finite-dimensional pointed Hopf algebra with $G(H)\cong\s_3$, then
either $H\cong\k\s_3$, $H\cong\mA_0$ or $H\cong\mA_1$ \cite[Theorem
4.5]{AHS}, together with \cite{MS,AG1,AZ}.

We will determine all simple modules over $\mA_0$ and $\mA_1$, along with
their projective covers and fusion rules.
We will also show that these algebras are not of finite representation type and
classify indecomposable modules satisfying certain restrictions. 
\begin{rem}\label{rem:Ht y a12}
Notice that to describe an $\mA_\lambda$-module supported on a given $G$-module, it is enough to describe the action of $a_{12}$, since $a_{13}, a_{23}\in\ad(G)(a_{12})$.
\end{rem}

\subsection{Simple $\k\s_3$-modules}

\
We will need some facts about the representation theory of $\s_3$, which we state next. Besides the modules $W_\eps$ and $W_{\sg}$ associated to the characters
$\eps$ and $\sg$, respectively, there is
one more simple $\k\s_3$-module, namely the standard representation
$W_{\st}$. This module has dimension 2. We fix $\{v,w\}$ as its
canonical basis. In this basis the representation is given by the
following matrices:
\begin{equation*} [H_{12}]=\begin{pmatrix} 0&1\\1&0\end{pmatrix}, \quad
[H_{23}]=\begin{pmatrix} 1&0\\-1&-1\end{pmatrix}, \quad
[H_{13}]=\begin{pmatrix} -1&-1\\0&1\end{pmatrix}.
\end{equation*}
Given a $\k\s_3$-module $W$, we denote by $W[\st]$ the isotypical component
corresponding to this representation.

\subsection{Representation theory of $\mA_0$}

\

\begin{pro}
There are exactly three simple $\mA_0$-modules, namely the extensions $S_\eps$,
$S_{\sg}$ and $S_{\st}$ of the simple $\k\s_3$-modules.
\end{pro}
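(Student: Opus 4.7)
The plan is to recognize $\mA_0$ as a bosonization and then apply Proposition \ref{pro:bxqsimple} directly. Setting $\lambda=0$ in the defining relations of $\mA_\lambda$ kills the right-hand side of the quadratic relation \eqref{eqn:hq3}, so every $a_C$ becomes zero. Comparing with the presentation of $\B(\mO_2^3,-1)\sharp\k\s_3$ recalled in Section 4, and noting that the only nontrivial equivalence class $C\in\mR'$ with $n(C)>1$ for $(\mO_2^3,-1)$ produces exactly the relation $a_{12}a_{23}+a_{23}a_{13}+a_{13}a_{12}=0$ (while $C_i=\{(i,i)\}$ yields $a_i^2=0$), we obtain $\mA_0\cong\B(\mO_2^3,-1)\sharp\k\s_3$ as algebras.

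Once this identification is in place, Proposition \ref{pro:bxqsimple} yields a bijection between the simple $\mA_0$-modules and $\widehat{\s_3}$. Since $\widehat{\s_3}=\{\eps,\sg,\st\}$, there are exactly three simple $\mA_0$-modules, given by the $\k\s_3$-modules $W_\eps$, $W_\sg$, $W_\st$ equipped with the trivial action of every $a_i$, $i\in\mO_2^3$. These are precisely the modules $S_\eps$, $S_\sg$ and $S_\st$ of the statement, each one restricting to the corresponding simple $\k\s_3$-module.

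The only real content is the identification $\mA_0\cong\B(\mO_2^3,-1)\sharp\k\s_3$, which amounts to checking that no extra relations are hidden in the ql-data construction when $\lambda=0$; this is immediate from \eqref{eqn:hq1}--\eqref{eqn:hq3}, because the only classes $C\in\mR'$ for $(\mO_2^3,-1)$ are the singletons $\{(i,i)\}$ (giving $a_i^2=0$) and the class $\{(ij),(jk))\}$-type giving the cubic-looking quadratic relation above, and in both cases $\lambda_C=0$ forces the right-hand side to vanish. Thus no obstacle beyond this bookkeeping arises, and the proposition follows.
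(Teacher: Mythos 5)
Your proposal is correct and follows essentially the same route as the paper, whose entire proof is the citation of Proposition \ref{pro:bxqsimple}; you merely make explicit the identification $\mA_0\cong\B(\mO_2^3,-1)\sharp\k\s_3$ (i.e.\ that setting $\lambda=0$ recovers the bosonization), which the paper leaves implicit.
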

\begin{proof}
Follows from Prop. \ref{pro:bxqsimple}.
\end{proof}

\subsubsection{Some indecomposable $\mA_0$-modules}

\

Fix $\langle x\rangle_{\s_3}=W_\eps$, $\langle y\rangle_{\s_3}=W_{\sg}$,
$\langle v,w\rangle_{\s_3}=W_{\st}$. 

\begin{lem}\label{lem:indesc0}
There are exactly four non-isomorphic non-simple indecomposable
$\mA_0$-modules  of dimension 3:
\begin{align}
\tag{i} &M_{\st,\eps}=\k\{x,v,w\}, &&\text{with}&& a_{12}\cdot v=x,
&&a_{12}\cdot x=0;\\
\tag{ii} &M_{\st,\sg}=\k\{y,v,w\}, &&\text{with}&& a_{12}\cdot v=y,&& a_{12}\cdot y=0;\\
\tag{iii} &M_{\eps,\st}=\k\{x,v,w\}, &&\text{with}&& a_{12}\cdot x=v-w,&& a_{12}\cdot v=0;\\
\tag{iv} &M_{\sg,\st}=\k\{y,v,w\}, &&\text{with}&& a_{12}\cdot y=v+w,&& a_{12}\cdot v=0.
\end{align}

In particular, $\dim\ext_{\mA_0}^1(S_{\st},S_\sigma)=\dim\ext_{\mA_0}^1(S_\sigma,S_{\st})=1$,
$\sigma\in\{\eps,\sg\}$.
\end{lem}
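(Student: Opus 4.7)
The strategy is a dimension-and-composition-factor analysis. By Proposition \ref{pro:bxqsimple}, the simple $\mA_0$-modules are $S_\eps, S_\sg$ of dimension one and $S_\st$ of dimension two. A non-simple indecomposable module $M$ of dimension three must have composition factors whose dimensions sum to three, leaving two options: either all three composition factors are one-dimensional, or they consist of one $S_\st$ and one $S_\sigma$ with $\sigma\in\{\eps,\sg\}$. The first option is ruled out by Proposition \ref{pro:1sn}, which states that any indecomposable $\mA_0$-module whose underlying $\s_3$-module is a direct sum of copies of $W_\eps$ and $W_\sg$ has dimension at most two. Hence $M_{|\s_3}\cong W_\st\oplus W_\sigma$.

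Next I would fix an $\s_3$-module identification $M\cong W_\st\oplus W_\sigma$ and determine the possible $\mA_0$-structures on it. By Remark \ref{rem:Ht y a12}, it suffices to describe the action of $a_{12}$; the other two generators $a_{13}, a_{23}$ are then recovered from the commutation relation between $H_t$ and $a_i$. In particular, with $t=(12)$ this relation forces $a_{12}$ to interchange the $\pm 1$-eigenspaces of $H_{12}$, which have dimensions $1$ and $2$ in our three-dimensional $M$ (in opposite orders for $\sigma=\eps$ and $\sigma=\sg$). Writing the most general such operator in terms of scalar parameters and imposing the remaining relations, namely $a_{12}^2=0$ and the cubic relation $a_{12}a_{23}+a_{23}a_{13}+a_{13}a_{12}=0$, yields a small system of polynomial equations in these parameters.

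The main technical step is solving this system. One checks that $a_{12}^2=0$ prevents the ``downward'' map $W_\st\to W_\sigma$ and the ``upward'' map $W_\sigma\to W_\st$ from both being non-zero, giving two disjoint families parametrised by scalars; within each family the cubic relation then collapses all remaining freedom except an overall scalar, which is absorbed by the scalar automorphisms on each isotypical component (the only $\s_3$-linear automorphisms of $M$, by Schur's lemma, since $W_\st$ and $W_\sigma$ are non-isomorphic simples). This produces exactly two non-trivial indecomposable modules for each $\sigma$, namely $M_{\st,\sigma}$ and $M_{\sigma,\st}$. The main obstacle to a short writeup is that verifying the cubic relation for each of the four candidate modules is a lengthy matrix calculation, although conceptually straightforward.

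Finally, the $\ext^1$ assertion is a direct corollary of this classification. Indeed, $M_{\st,\sigma}$ fits into a non-split extension $0\to S_\sigma\to M_{\st,\sigma}\to S_\st\to 0$ and $M_{\sigma,\st}$ fits into a non-split extension $0\to S_\st\to M_{\sigma,\st}\to S_\sigma\to 0$, so $\dim\ext^1\ge 1$ in each case; the uniqueness statement just proved shows that, via the parametrisation of Lemma \ref{lem:wrhomu} by $\mathbb{P}_\k(\ext^1)$, these projective spaces are points, yielding $\dim\ext^1_{\mA_0}(S_\st,S_\sigma)=\dim\ext^1_{\mA_0}(S_\sigma,S_\st)=1$ for $\sigma\in\{\eps,\sg\}$.
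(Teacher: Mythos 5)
Your proposal is correct and follows essentially the same route as the paper: Proposition \ref{pro:1sn} rules out the case where all composition factors are one-dimensional, forcing $M_{|\s_3}\cong W_{\st}\oplus W_\sigma$, after which the four modules are found by solving the relations $a_{12}^2=0$ and the quadratic relation for the $a_{12}$-action (the paper compresses this into ``straightforward computations''). The only cosmetic quibble is that the $\mathbb{P}_\k(\ext^1)$ parametrisation you invoke is stated in Lemma \ref{lem:wrhomu} only for extensions between one-dimensional modules, so for the pairs involving $S_{\st}$ you should appeal to the general fact that isomorphism classes of non-split extensions between non-isomorphic simples are classified by $\mathbb{P}_\k(\ext^1)$ — which is what your argument actually uses.
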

\begin{proof}
By Prop.
\ref{pro:1sn}, we know that such an $\mA_0$-module $M$ must contain a copy of $W_{\st}$. Thus
$M_{|\s_3}\cong W_{\eps}\oplus W_{\st}$ or
$M_{|\s_3}\cong W_{\sg}\oplus W_{\st}$. The lemma now follows by
straightforward computations.
\end{proof}

\begin{pro}\label{pro:suma de st0}
The non-isomorphic indecomposable modules which are extensions of
$S_{\st}$ by itself are indexed by $\mathbb{P}_\k^1$. In particular, it follows
that $\dim\ext_{\mA_0}^1(S_{\st},S_{\st})=1$.
\end{pro}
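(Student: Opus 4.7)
The plan is to parametrize all $\mA_0$-module structures on $M := W_{\st}\oplus W_{\st}$ that present it as an extension $0\to S_{\st}\to M\to S_{\st}\to 0$, and then collect them into isomorphism classes. By Remark \ref{rem:Ht y a12}, such a structure is determined by the single operator $a_{12}$ on $M$. Since $a_l$ acts as zero on any copy of $S_{\st}$, in any such extension $a_{12}$ must factor through the quotient $M\twoheadrightarrow S_{\st}$ and land in the sub-$S_{\st}$; equivalently it is encoded by a single linear map $\varphi\colon W_{\st}\to W_{\st}$ between the two $\s_3$-isotypic copies.

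Next, I would translate the defining relations of $\mA_0$ into constraints on $\varphi$. The relation imposing a genuine condition is the anticommutation $H_{12}a_{12}=-a_{12}H_{12}$, which becomes $H_{12}\varphi+\varphi H_{12}=0$; using the explicit matrix of $H_{12}$ in the basis $\{v,w\}$ of $W_{\st}$ this cuts out a two-dimensional subspace $V\subset\End(W_{\st})$. The commutations $H_{t}a_{(12)}=-a_{t\cdot(12)}H_t$ for $t\ne(12)$ only \emph{define} $a_{13}$ and $a_{23}$ as conjugates of $a_{12}$, and their mutual consistency follows from the braid/Coxeter relations among the $H_t$. Finally, the relations $a_l^2=0$ and the cubic Nichols relations $a_{12}a_{13}+a_{23}a_{12}+a_{13}a_{23}=0$ (and its twin) are automatic, because on $M$ every composite $a_la_m$ lands in $a_l(S_{\st})=0$.

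To pass from extensions to isomorphism classes of $\mA_0$-modules, I would use Schur's lemma: $\End_{\mA_0}(S_{\st})=\End_{\s_3}(W_{\st})=\k$, whence $\aut_{\mA_0}(S_{\st})=\k^*$. Under the resulting action of $\aut_{\mA_0}(S_{\st})\times\aut_{\mA_0}(S_{\st})$ on $V$ by pushout/pullback along automorphisms of the sub and quotient, a pair $(c_1\id,c_2\id)$ rescales $\varphi$ by $c_1c_2^{-1}\in\k^*$. Consequently, the isomorphism classes of non-split---equivalently, indecomposable---extensions biject with the orbits of $\k^*$ on $V\setminus\{0\}$, namely with $\mathbb{P}(V)=\mathbb{P}^1_\k$.

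The main obstacle is the bookkeeping in the middle step: confirming that the only genuine constraint on $\varphi$ is the anticommutation with $H_{12}$, that the higher defining relations of $\B(\mO_2^3,-1)\sharp\k\s_3$ on $M$ are automatic because $a_l|_M$ is nilpotent of index at most two, and that the $\k^*$-action on $V$ induced by $\aut_{\mA_0}(S_{\st})^{\times 2}$ is precisely by scalar multiplication—so that the indexing set comes out as the full projective line $\mathbb{P}^1_\k$ rather than a coarser or finer invariant.
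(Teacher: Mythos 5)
Your proposal is correct and follows essentially the same route as the paper: the paper likewise fixes a basis, writes $a_{12}v_2=av_1+bw_1$, observes that every nonzero $(a,b)$ defines an indecomposable module $M_{(a,b)}$, and that $M_{(a,b)}\cong M_{(a',b')}$ iff $(a,b)$ and $(a',b')$ are proportional; your packaging via the anticommuting map $\varphi$ and the $\aut_{\mA_0}(S_{\st})^{\times 2}$-action is just a cleaner way of saying the same thing. Two small points: to see that distinct lines give non-isomorphic modules you should note that an abstract isomorphism $M\to M'$ need not respect the chosen extension structure, but it must preserve $\mathrm{rad}\,M=\sum_i a_iM$, which for $\varphi\neq 0$ is exactly the sub-copy of $S_{\st}$, so Schur's lemma applies as you say. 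More importantly, your (correct) computation gives $\dim V=2$ and hence $\ext^1_{\mA_0}(S_{\st},S_{\st})\cong V$ of dimension $2$, so your argument does not --- and cannot --- establish the paper's ``in particular'' clause $\dim\ext^1_{\mA_0}(S_{\st},S_{\st})=1$; that clause is in fact inconsistent with the first sentence of the Proposition (a $\mathbb{P}^1_\k$ of non-split classes forces a two-dimensional $\ext^1$), and appears to be a slip in the paper rather than a gap in your proof.
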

\begin{proof}
If $\{v_1,v_2,w_1,w_2\}$ is basis of such a module, with $\{v_2,
w_2\}_{|\s_3}=W_{\st}$, $\{v_1,w_1\}\cong M_{st}$, then a necessary
condition is that $a_{12}v_2=av_1+bw_1$, $a\neq 0$ or $b\neq 0$. It is easy to
see that this formula defines in fact an indecomposable $\mA_0$ module
$M_{(a,b)}$ for each $(a,b)$ and that two of these modules, $M_{(a,b)}$ and
$M_{(a',b')}$, are
isomorphic if and only if $\exists\,\gamma\neq 0$ such that
$(a,b)=\gamma (a',b')$.
\end{proof}

\subsubsection{Representation type of $\mA_0$}\label{subsub:reprtypea0}

\begin{pro}
 $\mA_0$ is of wild representation type.
\end{pro}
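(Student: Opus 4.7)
The plan is to compute the Gabriel quiver $ExtQ(\mA_0)$ from the results already established in this section, pass to the radical-square-zero quotient $\mA_0/J^2$ via Lemma~\ref{lem:extq}, and then apply Theorem~\ref{teo:aus} to its separated diagram. Since $(\mA_0/J^2)$-modules form a full subcategory of $\mA_0$-modules through the quotient $\mA_0\twoheadrightarrow\mA_0/J^2$, wildness of $\mA_0/J^2$ will force wildness of $\mA_0$.

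First I would collect the seven relevant $\ext^1$-dimensions between $S_\eps,S_\sg,S_\st$. By Proposition~\ref{pro:dims1} and \eqref{eqn:chiandrhd1}, any $\mA_0$-module isotypic of type $\rho\in\{\eps,\sg\}$ over $\s_3$ is a direct sum of copies of $S_\rho$, so $\ext^1_{\mA_0}(S_\rho,S_\rho)=0$ for $\rho\in\{\eps,\sg\}$. Proposition~\ref{pro:suma de st0} gives $\dim\ext^1_{\mA_0}(S_\st,S_\st)=1$. Proposition~\ref{pro:1sn} applied to $\mA_0$ yields the unique indecomposables $M_{\eps,\sg}$ and $M_{\sg,\eps}$, from which Lemma~\ref{lem:wrhomu} forces $\dim\ext^1_{\mA_0}(S_\eps,S_\sg)=\dim\ext^1_{\mA_0}(S_\sg,S_\eps)=1$. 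Finally, Lemma~\ref{lem:indesc0} provides the four indecomposables $M_{\st,\eps},M_{\st,\sg},M_{\eps,\st},M_{\sg,\st}$ and records that $\dim\ext^1_{\mA_0}(S_\st,S_\sigma)=\dim\ext^1_{\mA_0}(S_\sigma,S_\st)=1$ for $\sigma\in\{\eps,\sg\}$. Hence $ExtQ(\mA_0)$ has vertices $\{\eps,\sg,\st\}$, one loop at $\st$, and one arrow in each direction between every pair of distinct vertices.

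Reading off the separated diagram, I obtain the bipartite graph on the six vertices $\{\eps',\sg',\st',\eps'',\sg'',\st''\}$ with seven edges, one per arrow of $ExtQ(\mA_0)$. It contains the $6$-cycle
\[
\eps' - \sg'' - \st' - \eps'' - \sg' - \st'' - \eps',
\]
and the loop at $\st$ contributes the additional chord $\st'-\st''$; in particular it is connected, with cyclomatic number $7-6+1=2$.

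Every Dynkin diagram is a tree, and every affine Dynkin diagram has cyclomatic number at most $1$ (namely, the cycle $\tilde A_n$, or a tree of type $\tilde D_n$, $\tilde E_6$, $\tilde E_7$ or $\tilde E_8$). Consequently the separated diagram of $\mA_0$ is not a disjoint union of Dynkin or affine Dynkin diagrams, and Theorem~\ref{teo:aus} forces $\mA_0/J^2$ to be of wild representation type; wildness of $\mA_0$ then follows from the full embedding noted above. The only piece of real work is assembling the Ext-quiver correctly; once the seven dimensions are in place, the Auslander--Reiten--Smal\o{} criterion closes out the argument almost immediately.
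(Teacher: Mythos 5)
Your proof is correct and follows essentially the same route as the paper's: assemble the Ext-quiver from Propositions~\ref{pro:dims1}, \ref{pro:1sn}, \ref{pro:suma de st0} and Lemma~\ref{lem:indesc0}, form the separated diagram (six vertices, seven edges), and invoke Theorem~\ref{teo:aus}. You are somewhat more explicit than the paper in passing through $\mA_0/J^2$ via Lemma~\ref{lem:extq} and in justifying, via the cyclomatic number, why the separated diagram is neither a union of finite nor of affine Dynkin diagrams, but the argument is the same.
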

\begin{proof}
From Lemmas \ref{lem:wrhomu2} and
\ref{lem:indesc0} together with Prop. \ref{pro:suma de st0}, we see that the Ext-Quiver of $\mA_0$ is
\begin{equation*}
 \xymatrix{\bullet^1\ar@/^/[rr]\ar@/^/[dr]&
&\bullet^3\ar@/^/[ll]\ar@/^/[dl]\ar@(ur,dr)\\
  &\bullet^2\ar@/^/[ul]\ar@/^/[ur]&}
\end{equation*}
where we have ordered the simple modules as
$\{S_\eps,S_{\sg},S_{\st}\}=\{1,2,3\}$.
Thus, the separation diagram of $\mA_0$ is
\begin{equation*}
\xymatrix{\bullet^1\ar@{-}[r]&\bullet^{2'}\ar@{-}[r]&\bullet^{3}\ar@{-}[d]\\
\bullet^{3'}\ar@{-}[u]\ar@{-}[urr]&\bullet^{2}\ar@{-}[l]&\bullet^
{1'}\ar@{-}[l]}
\end{equation*}
which implies that $\mA_0$ is wild.
\end{proof}

\subsection{Representation theory of  $\mA_1$}

\

We investigate now the simple modules of $\mA_1$, their fusion rules and
projective covers, and also the representation type of this algebra.

\subsubsection{Modules that are sums of 2-dimensional representations}

We first focus our
attention on those $\mA_1$-modules supported on sums of standard
representations of $\k\s_3$.

\begin{lem}\label{lem:Sst(i)}
Let $M_{\st}=\k\{v,w\}$. Then, the following formulas define four non-isomorphic
$\mA_1$-modules supported on $M_{\st}$:
\begin{align}
\tag{i} &a_{12}v=\im(v-w),&& a_{12}w=\im(v-w);\\
\tag{ii} &a_{12}v=-\im(v-w),&& a_{12}w=-\im(v-w);\\
\tag{iii} &a_{12}v=\frac\im3(v+w),&& a_{12}w=-\frac\im3(v+w);\\
\tag{iv} &a_{12}v=-\frac\im3(v+w),&& a_{12}w=\frac\im3(v+w).
\end{align}
They are simple modules, and we denote them by $S_{\st}(\im),
S_{\st}(-\im), S_{\st}(\frac\im3)$, $S_{\st}(-\frac\im3)$, respectively.
\end{lem}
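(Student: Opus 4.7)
The plan is to verify three things for each of the four formulas: that it defines an $\mA_1$-module structure on $M_{\st}$, that this structure is simple, and that the four resulting modules are pairwise non-isomorphic.

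For the first task, I appeal to Remark \ref{rem:Ht y a12}: it suffices to specify $a_{12}$, since the relation $H_t a_i = -a_{t\cdot i}H_t$ forces $a_{13} = -H_{23}a_{12}H_{23}^{-1}$ (because $(23)(12)(23)=(13)$) and $a_{23} = -H_{13}a_{12}H_{13}^{-1}$ (because $(13)(12)(13)=(23)$). In each case I would compute the $2\times 2$ matrices $[a_{13}]$ and $[a_{23}]$ in the basis $\{v,w\}$ from the given $[a_{12}]$ and the matrices of the standard representation, and then verify the remaining defining relations of $\mA_\lambda$ with $\lambda=1$: the group relations among the $H_t$ hold because $M_{\st}$ is a genuine $\k\s_3$-module; the commutations $H_ta_i=-a_{t\cdot i}H_t$ hold by construction (together with the small direct check of the cases $H_{ij}a_{ij}=-a_{ij}H_{ij}$, since $(ij)\cdot(ij)=(ij)$); the relation $a_{12}^2=0$ is one short matrix calculation; and the quadratic relation $a_{12}a_{23}+a_{23}a_{13}+a_{13}a_{12}=1-H_{12}H_{23}$ is a direct $2\times 2$ matrix identity to be checked.

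Next, simplicity is immediate: any $\mA_1$-submodule of $M_{\st}$ is in particular a $\k\s_3$-submodule, and the standard representation is simple as a $\k\s_3$-module. Pairwise non-isomorphism then follows from Schur's lemma: any $\mA_1$-isomorphism between two such modules restricts to a $\s_3$-automorphism of the standard representation, hence to multiplication by a nonzero scalar $c\in\k^*$; such a scalar intertwines the $a_{12}$-actions only when the two matrices $[a_{12}]$ coincide, which they do not for the four listed formulas (they are pairwise distinct $2\times 2$ matrices, a fact visible at a glance).

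The main obstacle, purely computational, is the verification of the quadratic relation in each case. One can halve the work by observing that the involution $a_i\mapsto -a_i$ preserves all the defining relations of $\mA_1$ (both sides of each relation scale compatibly, as the quadratic relation is bilinear in the $a$'s on the left and constant on the right), so that case (ii) follows formally from case (i) and case (iv) from case (iii). The specific scalars $\pm\im$ and $\pm\tfrac{\im}{3}$ themselves emerge as the solutions of the polynomial system obtained by writing $[a_{12}]$ with four unknown entries and imposing $a_{12}^2=0$ together with the quadratic relation; recording this derivation in the proof would also make clear that these four are the only $\mA_1$-structures on $M_{\st}$ of this isotypic type.
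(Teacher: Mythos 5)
Your proposal is correct and matches the paper's approach: the paper's proof of this lemma is simply the word ``Straightforward,'' i.e.\ exactly the direct verification of the defining relations, simplicity via the simplicity of $W_{\st}$ over $\k\s_3$, and non-isomorphism via Schur's lemma that you spell out. Your extra observations (the sign involution $a_i\mapsto -a_i$ halving the work, and deriving the scalars $\pm\im,\pm\frac{\im}{3}$ from the polynomial system) are sound and in fact anticipate the computation the paper carries out in Prop.~\ref{pro:suma de st}.
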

\begin{proof}
Straightforward.
\end{proof}

\begin{pro}\label{pro:suma de st}
Let $p\in\mathbb{N}$ and let $M$ be an $\mA_1$-module such that
$M=M[\st]$, $\dim M=2p$. Then $M$ is completely reducible. 

$M$ is simple if only if $p=1$. In this case, it is isomorphic to one of the modules $S_{\st}(\im),
S_{\st}(-\im), S_{\st}(\frac\im3), S_{\st}(-\frac\im3)$.
\end{pro}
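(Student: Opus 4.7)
The plan is to parametrise all possible $\mA_1$-module structures on a given $\s_3$-isotypic module of type $\st$, translate the defining relations of $\mA_1$ into linear-algebraic conditions on the parameters, and then diagonalise.

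Since $M=M[\st]$, as $\s_3$-modules we may identify $M$ with $W_\st\otimes U$, where $U$ is a $p$-dimensional vector space with trivial $\s_3$-action. By Remark~\ref{rem:Ht y a12}, the $\mA_1$-structure is determined by the action of $a_{12}$. The relation $H_t a_{i}=-a_{t\cdot i}H_t$ taken at $t=i=(12)$ forces $a_{12}$ to anticommute with $H_{12}$; using the explicit matrix of $H_{12}$ in the basis $\{v,w\}$ of $W_\st$, this means there exist operators $A,B\in\End(U)$ such that
\[
a_{12}(v\otimes u)=v\otimes A(u)+w\otimes B(u),\qquad a_{12}(w\otimes u)=-v\otimes B(u)-w\otimes A(u).
\]
Conversely, every such pair $(A,B)$ defines an $H_t$- and $a_{12}$-action compatible with~\eqref{eqn:hq2}.

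Next I would impose the remaining relations of $\mA_1$. The relation $a_{12}^2=0$ yields $[A,B]=0$ and $A^2=B^2$. To exploit the cubic relation $a_{12}a_{23}+a_{23}a_{13}+a_{13}a_{12}=1-H_{12}H_{23}$, I would write $a_{23}=-H_{13}a_{12}H_{13}$ and $a_{13}=-H_{23}a_{12}H_{23}$ using \eqref{eqn:hq2}, substitute into both sides acting on $v\otimes u$ and $w\otimes u$, and simplify with the aid of $[A,B]=0$ and $A^2=B^2$. A direct (but bookkeeping-heavy) computation collapses all four resulting equations to the single scalar identity
\[
5A^2+4AB=-\id_U.
\]

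From this point the argument is pure linear algebra. The cubic equation forces $A$ to be invertible, with $A^{-1}=-(5A+4B)$. Setting $T:=BA^{-1}$, commutativity together with $A^2=B^2$ gives $T^2=\id_U$, hence $U=U_+\oplus U_-$ with $T_{|U_\pm}=\pm\id$, i.e.\ $B=A$ on $U_+$ and $B=-A$ on $U_-$. On $U_+$ the cubic relation becomes $9A^2=-\id$, so $A$ is diagonalisable with eigenvalues in $\{\im/3,-\im/3\}$; on $U_-$ it becomes $A^2=-\id$, so $A$ is diagonalisable with eigenvalues in $\{\im,-\im\}$. Hence $U$ decomposes into four simultaneous eigenspaces for $(A,B)$, corresponding exactly to the four eigenvalue pairs appearing in Lemma~\ref{lem:Sst(i)}. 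Each one-dimensional line in these eigenspaces, tensored with $W_\st$, is a two-dimensional $\mA_1$-submodule isomorphic to the appropriate $S_\st(\pm\im)$ or $S_\st(\pm\im/3)$, proving simultaneously that $M$ is completely reducible and that it is simple precisely when $p=1$.

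The main obstacle is the routine but bulky computation that brings the cubic relation down to the clean identity $5A^2+4AB=-\id_U$; once that normal form is in hand, the decomposition via the involution $BA^{-1}$ is immediate and the classification follows.
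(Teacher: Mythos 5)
Your proposal is correct and follows essentially the same route as the paper: parametrize the action of $a_{12}$ on $M\cong W_{\st}\otimes U$ by a commuting pair $(A,B)$ with $A^2=B^2$ coming from $a_{12}^2=0$, reduce the cubic relation to $5A^2+4AB=-\id_U$, and diagonalize simultaneously. The only (cosmetic) difference is in the last step: you split $U$ first via the involution $T=BA^{-1}$ and then solve $9A^2=-\id$ or $A^2=-\id$ on each piece, whereas the paper eliminates $\beta$ to get the quartic $(\alpha^2+\frac59\id)^2=\frac{16}{81}\id$ for $\alpha$ alone and reads off $\beta=\pm\alpha$ afterwards; both yield the same four simultaneous eigenspaces and hence the same conclusion.
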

\begin{proof}
Let $\{v_i, w_i\}_{i=1}^p$ be copies of the canonical basis of
$W_{\st}$ such that $\{v_i, w_i\}_{i=1}^p$ is a linear basis of $M$. Let
$v=(v_1, \ldots, v_p)$, $w=(w_1, \ldots,
w_p)$. Now,
there must exist matrices $\alpha,\beta\in\k^{p\times p}$ such that
$a_{12}\cdot v=\alpha v+\beta w$ and thus $a_{12}\cdot w=-\beta
v-\alpha w$, by acting with $H_{12}$. By acting with the rest of the
elements $H_t$ we get:
\begin{align*}
a_{13}\cdot v&=-(\alpha+\beta) v+2(\alpha+\beta) w, && a_{13}\cdot
w=-\beta v+(\alpha+\beta) w,
\\
a_{23}\cdot v&=-(\alpha+\beta) v+\beta w && a_{23}\cdot
w=-2(\alpha+\beta) v+(\alpha+\beta) w.
\end{align*}
Now, $0=a_{12}^2 v=\alpha a_{12}\cdot v+\beta a_{12}\cdot
w=(\alpha^2-\beta^2)v+(\alpha\beta-\beta\alpha)w$, and this implies
that $\alpha^2=\beta^2$, $\alpha\beta=\beta\alpha$. Hence,
\begin{align*}
  (a_{12}a_{13}+a_{13}a_{23}+a_{23}a_{12})\cdot
v&=(-5\alpha^2-4\alpha\beta)(v+w),\quad\\
\text{while}\quad (1-H_{12}H_{13})\cdot v&=v+w,
\end{align*}
and thus $-5\alpha^2-4\alpha\beta=\id$.

Now, we have that, in particular,
$-5\alpha-4\beta=\alpha^{-1}$ and therefore
$\beta=-\frac54\alpha-\frac14\alpha^{-1}$. Thus,
\begin{equation*}
\alpha^2=\beta^2=\frac1{16}(5\alpha+\alpha^{-1})^2=\frac1{16}(25\alpha^2+\alpha^
{-2}+10\id),
\end{equation*}
from where it follows $(\alpha^2)^{-1}=-9\alpha^2-10\id$ and
$\id=-9\alpha^4-10\alpha^2$, which is equivalent to
\begin{equation}\label{eqn:alpha}
(\alpha^2+\frac59\id)^2=\frac{16}{81}\id.
\end{equation}
This gives, in particular, that if $\theta\in\k$ is an eigenvalue of
$\alpha$, then $\theta\in
L(\alpha):=\{\pm\mathrm{i},\pm\frac{\mathrm{i}}3\}$. Now, let
$\alpha\in\k^{p\times p}$ be a matrix satisfying equation
\eqref{eqn:alpha}. A simple analysis of the possible Jordan forms
$J(\alpha)$ of $\alpha$ gives
$J(\alpha)=\diag(\theta_1,\ldots,\theta_p)$, for some $\theta_i\in
L(\alpha)$, $i=1,\ldots,p$. If $p>1$, we
get that there is a basis of $M$ in which $\alpha$ (and consequently
$\beta$) is a diagonal matrix, and so $M$ is completely reducible.

On the other hand, if $p=1$, $\alpha\in L(\alpha)$ and
$\beta=\pm\alpha$ give the module structures defined in Lemma
\ref{lem:Sst(i)}. 
\end{proof}

\subsubsection{Classification of simple modules over $\mA_1$}

Now, we present the classification of all simple $\mA_1$-modules.

\begin{theorem}\label{teo:simple s3}
Let $M$ be a simple $\mA_1$-module. Then $M$ is isomorphic to one and only one of
the following:\begin{itemize}
\item $S_\eps$;
\item $S_{\sg}$;
\item $S_{\st}(\mathrm{i})$, $S_{\st}(-\mathrm{i})$,
$S_{\st}(\frac{\mathrm{i}}{3})$ or $S_{\st}(-\frac{\mathrm{i}}{3})$.
\end{itemize}
\end{theorem}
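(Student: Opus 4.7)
I first verify pairwise non-isomorphism of the six listed modules. The one-dimensional $S_\eps$ and $S_\sg$ carry distinct $\s_3$-characters, hence are non-isomorphic to each other and to the remaining four, each of which restricts to $W_\st$. Since $\End_{\s_3}(W_\st)=\k$, any $\mA_1$-isomorphism between two of the $S_\st(\alpha)$ must act as a scalar on $W_\st$, and comparing the explicit matrix of $a_{12}$ from Lemma \ref{lem:Sst(i)} then forces the parameters to agree.

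For completeness, let $M$ be a simple $\mA_1$-module and decompose $M_{|\s_3}=M[\eps]\oplus M[\sg]\oplus M[\st]$. If $M[\st]=0$, then since $\mA_1=\mH(\mQ_3^{-1}[1])$ Prop. \ref{pro:1sn} forces $M\cong S_\eps$ or $M\cong S_\sg$. If $M=M[\st]$, Prop. \ref{pro:suma de st} forces $\dim M=2$ and $M\cong S_\st(\alpha)$ for one of the four prescribed values of $\alpha$. It remains to rule out the mixed case, in which both $M[\st]\neq 0$ and $M[\eps]\oplus M[\sg]\neq 0$.

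To handle the mixed case I would first observe that $T:=\bigcap_{i\in\mO_2^3}\ker a_i$ is an $\mA_1$-submodule of $M$: it is trivially $a_i$-stable, and it is $\s_3$-stable because conjugation by each $H_t$ permutes $\{a_i\}_{i\in\mO_2^3}$ up to a sign (Remark \ref{rem:Ht y a12}). If $T=M$, every $a_i$ vanishes on $M$ and the defining relation $a_{12}a_{23}+a_{23}a_{13}+a_{13}a_{12}=1-H_{12}H_{23}$ forces $H_{12}H_{23}$ to act as the identity on $M$, contradicting $M[\st]\neq 0$: on $W_\st$ the $3$-cycle $H_{12}H_{23}$ has eigenvalues the primitive cube roots of unity. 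Hence $T=0$ by simplicity of $M$.

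Fix $0\neq x\in M[\eps]$ (the case $M[\sg]\neq 0$ is symmetric). Because $a_{23}x$ and $a_{13}x$ are, up to signs from $\chi\equiv -1$, the images of $a_{12}x$ under $H_{13}$ and $H_{23}$, $T=0$ forces $a_{12}x\neq 0$, and the $H_{12}$-eigenvalue computation places $a_{12}x\in M[\sg]\oplus M[\st]$. The main obstacle is the closing step: using the two relations $a_{12}^2x=0$ and $(a_{12}a_{23}+a_{23}a_{13}+a_{13}a_{12})x=0$, show by a direct computation (parallel in style to Lemma \ref{lem:indesc0} for $\mA_0$) that the $\s_3$-submodule of $M$ generated by $x$ and $\{a_ix\}_{i\in\mO_2^3}$ is $\mA_1$-stable, finite-dimensional, and proper in $M$, contradicting simplicity. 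I expect this final check to subdivide according to whether $a_{12}x\in M[\sg]$, $a_{12}x\in M[\st]$, or both, with each sub-case collapsing via the two relations to a small-dimensional configuration already excluded in cases 1 and 2.
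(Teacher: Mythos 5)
Your reduction to the mixed case is sound and matches the paper: the non-isomorphism check, the appeal to Prop.~\ref{pro:1sn} when $M[\st]=0$, and the appeal to Prop.~\ref{pro:suma de st} when $M=M[\st]$ are exactly what the paper does. Your preliminary observations about the mixed case are also correct and even slightly cleaner than the paper's opening: $T=\bigcap_{i}\ker a_i$ is indeed a submodule, and $T=M$ is impossible because the quadratic relation would force $H_{12}H_{23}$ to act as the identity while it has nontrivial eigenvalues on $W_{\st}$; hence $T=0$ and $a_{12}x\neq 0$ for $0\neq x\in M[\eps]$.

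However, the closing step --- the actual content of the theorem --- is not carried out, and the strategy you sketch for it has a real gap. You assert that the $\s_3$-submodule generated by $x$ and $\{a_ix\}_{i\in\mO_2^3}$ is $\mA_1$-stable, but this is not automatic: the relations only give $a_{12}^2x=0$ and one linear relation among $a_{12}a_{23}x$, $a_{23}a_{13}x$, $a_{13}a_{12}x$ (namely that their sum vanishes, since $\rho(g_ig_j)=1$ on $M[\eps]$); they do not place each product $a_ja_ix$ individually inside the span of $x$ and the $a_ix$. Proving stability, or otherwise producing a proper nonzero submodule, is precisely where the work lies, and ``I expect this final check to subdivide\dots'' leaves it undone. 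The paper instead writes $a_{12}$ in block form with matrices $\alpha,\beta,\gamma,\eta,a,b,c,d$ relative to a decomposition $M[\eps]\oplus M[\sg]\oplus M[\st]$, extracts the full system \eqref{eqn:sistema1} from $a_{12}^2=0$ and the quadratic relation, and then argues by linear algebra: a null row of $\alpha$ or $\gamma$ yields a copy of $S_\eps$ or $S_{\sg}$ inside $M$; if neither has a null row then ${}^t\gamma\,{}^t\alpha={}^t(\alpha\gamma)=0$ with ${}^t\alpha,{}^t\gamma$ injective forces $\alpha=0$, killing the case $n,m>0$; and in the remaining two-component case \eqref{eqn:sistema} either ${}^t\beta$ is injective (whence $a=0$ and $\mA_1\cdot M[\st]$ is proper) or a kernel vector of ${}^t\beta$ spans a one-dimensional submodule. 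You would need to supply an argument of comparable substance to close your case analysis; also note that your plan only fixes $x\in M[\eps]$ ``by symmetry,'' whereas the genuinely three-component situation $n,m,p>0$ requires the interplay between $\alpha$ and $\gamma$ that the paper's Claim provides.
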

\begin{proof}
We know that the listed modules are all simple. In view of
Props. \ref{pro:1sn} and \ref{pro:suma de st}, we are left to deal
with the case in which $M_{|\s_3}= M[\eps]\oplus
M[\sg]\oplus M[\st]$, with $\dim M[\eps]=n$, $\dim
M[\sg]=m$, $\dim M[\st]=p$, $n+m, p> 0$. 
Let $\{x_1,\ldots, x_n, y_1, \ldots, y_m, v_1, \ldots, v_p, w_1,\ldots, w_p\}$
be a basis of $M$ such that
$\k\{x_i\}\cong W_{\eps}$, $i=1, \ldots, n$, $\k\{y_j\}\cong
W_{\sg}$, $j=1, \ldots, m$, $\k\{v_k,w_k\}\cong
W_{\st}$, $k=1, \ldots, p$. Using the action of $H_{12}$, we find
that there are matrices $\alpha\in\k^{n\times m}$,
$\beta\in\k^{n\times p}$, $\gamma\in\k^{m\times n}$,
$\eta\in\k^{m\times p}$, $a\in\k^{p\times n}$, $b\in\k^{p\times m}$
and $c, d\in\k^{p\times p}$, such that, if $x=(x_1, \ldots, x_n)$,
$y=(y_1, \ldots, y_m)$, $v=(v_1, \ldots, v_p)$, $w=(w_1, \ldots,
w_p)$, the action of $a_{12}$ is determined by the following
equations:
\begin{align*}
  a_{12}\cdot x&=\alpha y+\beta(v-w), && a_{12}\cdot y=\gamma
x+\eta(v+w)\\
a_{12}\cdot v&=ax+by+cv+dw, &&  a_{12}\cdot w=-ax+by-dv-cw.
\end{align*}
We deduce as in Prop. \ref{pro:suma de st} the action of every $a_\sigma$:
\begin{align*}
  & a_{13}\cdot x=\alpha y-\beta v, & &  a_{13}\cdot v=-2ax-(c+d)v+2(c+d)w,\\
  & a_{13}\cdot y=\gamma x+\eta(v-2w) & & a_{13}\cdot w=-ax-by-dv+(c+d)w,\\
& a_{23}\cdot x=\alpha y+\beta w,& & a_{23}\cdot v=ax-by-(c+d)v+dw, \\
 &a_{23}\cdot y=\gamma x+\eta(w-2v), & & a_{23}\cdot w=2ax-2(c+d)v+(c+d)w.\\
\end{align*}
Recall that it is enough to find a subspace stable under the action of $a_{12}$
and the elements $H_t$, by Rem. \ref{rem:Ht y a12}. Now,

\begin{align*}
  0&=a_{12}^2x=(\alpha\gamma+2\beta
a)x+(\alpha\eta+\beta(c+d))(v+w);\\
  0&=a_{12}^2y=(\gamma\alpha+2\eta
b)y+(\gamma\beta+\eta(c-d))(v-w);\\
  0&=a_{12}^2v= (b\gamma+(c-d)a)x+(a\alpha+(c+d)b)y\\
  &\quad+(a\beta+b\eta+c^2-d^2)v+(-a\beta+b\eta+cd-dc)w;
\end{align*}

  \begin{align*}
 0&=(a_{12}a_{13}+a_{13}a_{23}+a_{23}a_{12})\cdot x=(3\alpha \gamma-3\beta
a)x-3\beta
by;\\ 0&=(a_{12}a_{13}+a_{13}a_{23}+a_{23}a_{12})\cdot y=9\eta
ax+3(\gamma\alpha-\eta
  b)y;\\
 v+w&=(a_{12}a_{13}+a_{13}a_{23}+a_{23}a_{12})\cdot v\\&=(-3a\beta
-3b\eta-c^2-4d^2-2dc-2cd)v\\
& \quad+(3a\beta+3b\eta-4c^2-d^2-2dc-2cd)w.
\end{align*}

Then we have the following equalities:
\begin{equation}\label{eqn:sistema1}
  \begin{cases}
  &0= \gamma\alpha=\alpha\gamma=\beta a=\beta b=\eta a=\eta b,\\
  &\beta(c+d)+\alpha\eta=0=\eta(c-d)+\gamma\beta,\\
  &  b\gamma+(c-d)a=0=a\alpha +(c+d)b,\\
  &d^2-c^2=a\beta+b\eta,\quad cd-dc=a\beta-b\eta \\
    &    3a\beta+ 3b\eta=-c^2-4d^2-2dc-2cd-\id\\
    &3a\beta+ 3b\eta=4c^2+d^2+2dc+2cd+\id.
  \end{cases}
\end{equation}
From the last two equations:
$$
   c^2-d^2=2(a\beta+ b\eta),\quad 5(c^2+d^2)+4(dc+cd)=-2\id,
  $$
and thus $a\beta+ b\eta=0$, $c^2=d^2$. Notice that the matrix of $a_{12}$ in the
chosen basis is:
\begin{equation*}
    [a_{12}]=
  \begin{pmatrix}
0 & \,^t\gamma & \,^ta&-\,^ta\\
\,^t\alpha& 0& \,^tb&\,^tb\\
\,^t\beta&\,^t\eta&\,^tc&-\,^td\\
-\,^t\beta&\,^t\eta&\,^td&-\,^tc\\
  \end{pmatrix}.
\end{equation*} Now we make the following
\begin{claim}
  If $\alpha$ or $\gamma$ have a null row, then $M$ is not simple.
\end{claim}
In fact, assume $(\alpha_{11}, \ldots, \alpha_{1n})=0$. We have
$a_{12}\cdot x_1=\sum_{j}\beta_{1j}(v_j-w_j)$, if this is zero, then
$\langle x_1\rangle\cong S_\eps\subset M$ and $M$ is not simple. If
not, let $$\bar{v}_1=\sum_{j}\beta_{1j} v_j,\quad 
\bar{w}_1=\sum_{j}\beta_{1j} w_j.$$ Thus, $a_{12}\cdot
x_1=\bar{v}_1-\bar{w}_1$ and as $0=a_{12}^2x_1$ we have that
$a_{12}\bar{v}_1=a_{12}\bar{w}_1$. But, moreover, we also have that
$$a_{12}\bar{v}_1=\sum_i(\beta a)_{1i}
x_i+\sum_k(\beta(c+d))_{1k}(v_k+w_k)=0,$$ since $\beta a=0$ and
$(\beta(c+d))_{1k}=-(\alpha\eta)_{1k}=-\sum_l\alpha_{1l}\eta_{lk}=0$.
Then $\bar{v}_1=0$, $S_\eps\subset M$ and
$M$ is not simple. 

The claim when a row of $\gamma$ is null follows
analogously, or just tensoring with the representation $S_{\sg}$,
since it interchanges the roles of $\alpha$ and $\gamma$.

Then we see that, for $M$ to be simple, we necessarily must have
$\,^t\alpha$, $\,^t\gamma$ injective. But
$0=\,^t(\alpha\gamma)=\,^t\gamma\,^t\alpha\Rightarrow \alpha=0$.
Thus $M$ cannot be simple if $n,m>0$. Therefore, we are left with
the (equivalent) cases \begin{align*}
&M_{|\s_3}=M[\eps]\oplus
M[\st], &&\text{with } \dim M[\eps]=n, \quad \dim M[\st]=p, \quad n,p>0;\\
&M_{|\s_3}= M[\sg]\oplus
M[\st], &&\text{with } \dim M[\sg]=m, \quad \dim M[\st]=p, \, m,p>0.
                       \end{align*}
Assume we are in the
first case.
Thus, the equations above become:
\begin{equation}\label{eqn:sistema}
  \begin{cases}
    & a\beta=\beta a=0,\quad \beta(c+d)=0,\quad (c-d)a=0,\\
& d^2=c^2,\quad cd=dc, \quad c(-5c-4d)=\id.
  \end{cases}
\end{equation}
Now, in particular, if $\,^t\beta$ is injective, we have $\,^ta=0$
and thus $\mA_1\cdot M[\st]\varsubsetneq M[\st]$. But if $\,^t\beta$
is not injective, we may find a non-trivial linear combination $x$
of the elements $\{x_i\}_{i=1}^n$ making $S_\eps=\langle
x\rangle$ into an $\mA_1$-submodule of $M$.
\end{proof}

\subsubsection{Some indecomposable $\mA_1$-modules}

\ 

We start by studying the 3-dimen\-sional
indecomposable modules. As said in Lemma \ref{lem:indesc0}, it follows that for
such a module $M$, it holds either that $M_{|\s_3}\cong W_{\eps}\oplus W_{\st}$ or
$M_{|\s_3}\cong W_{\sg}\oplus W_{\st}$. Take $x,y,v,w$ such that
$\langle x\rangle _{|\s_3}=W_{\eps}$, $\langle y\rangle _{|\s_3}=W_{\sg}$,
$\langle v,w\rangle _{|\s_3}=W_{\st}$.

\begin{lem}\label{lem:indesc}
There are exactly eight non-isomorphic non-simple indecomposable
$\mA_1$-modules of dimension 3:
\begin{align}
\tag{i} &M_{\st,\eps}[\pm\frac\im3]=\k\{x,v,w\},&& a_{12}\cdot
v=\pm\frac\im3(v+w)+x,&& a_{12}\cdot x=0;\\
\tag{ii} &M_{\st,\sg}[\pm\im]=\k\{y,v,w\},&& a_{12}\cdot v=\pm\im(v-w)+y,&&
a_{12}\cdot y=0;\\
\tag{iii} &M_{\eps,\st}[\pm\im]=\k\{x,v,w\},&& a_{12}\cdot
v=\pm\im(v-w),&& a_{12}\cdot x=v-w;\\
\tag{iv} &M_{\sg,\st}[\pm\frac\im3]=\k\{y,v,w\},&& a_{12}\cdot
v=\pm\frac\im3(v+w),&& a_{12}\cdot y=v+w.
\end{align}
\end{lem}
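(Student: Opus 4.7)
The argument follows the template of Lemma \ref{lem:indesc0}. Since $\dim M=3$, its $\s_3$-isotypic decomposition is severely constrained: by Prop.\ \ref{pro:1sn}, any indecomposable $\mA_1$-module whose $\s_3$-structure involves only $\eps$ and $\sg$ has dimension at most $2$, so a $3$-dimensional non-simple indecomposable must have $M[\st]\neq 0$. Since $\dim M[\st]$ is even and at most $3$ it equals $2$, and the remaining $1$-dimensional piece is either $W_\eps$ or $W_\sg$. Fix bases $\{x,v,w\}$ or $\{y,v,w\}$ adapted to these two cases.

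By Rem.\ \ref{rem:Ht y a12}, describing the $\mA_1$-structure amounts to giving $a_{12}$. The commutation relation $H_{12}a_{12}=-a_{12}H_{12}$ forces $a_{12}\cdot x$ (resp.\ $a_{12}\cdot y$) to lie in the $H_{12}$-eigenspace matching the sign of $\eps$ or $\sg$ under the $-1$ twist, that is $\k(v-w)$ in the $\eps$-case and $\k(v+w)$ in the $\sg$-case. Hence I would write $a_{12}\cdot x=\mu(v-w)$, $a_{12}\cdot v=\alpha x+\beta v+\gamma w$, and read off $a_{12}\cdot w$ and the actions of $a_{13},a_{23}$ by conjugating with $H_{13},H_{23}$, exactly as in the proof of Thm.\ \ref{teo:simple s3}. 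I would then impose the two remaining defining relations: the quadratic $a_{12}^2=0$ applied to $v$ and to $x$, and the cubic $a_{12}a_{23}+a_{23}a_{13}+a_{13}a_{12}=1-H_{12}H_{23}$ (here $\lambda=1$). The quadratic relation cleanly splits into two sub-cases, $\mu=0$ (the $1$-dimensional piece is a submodule, leading to families (i)-(ii)) or $\alpha=0$ (it is a quotient, leading to families (iii)-(iv)), with the other parameter required to be non-zero for indecomposability.

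The cubic relation then pins down the remaining scalar parameter on the $\st$-component to one of $\pm\im,\pm\im/3$, mirroring the analysis of Lemma \ref{lem:Sst(i)} and Prop.\ \ref{pro:suma de st}: the value $\pm\im$ arises in the $(v-w)$-eigendirection and $\pm\im/3$ in the $(v+w)$-eigendirection. Matching the eigendirection selected by $H_{12}$ against whether the $1$-dimensional piece is of type $\eps$ or $\sg$ yields precisely the four pairings written in the statement. To finish I would check (a) that by rescaling the generator of the $1$-dimensional isotypic component one may normalize the non-zero glue parameter ($\mu$ or $\alpha$) to $1$, producing the explicit formulas displayed; (b) pairwise non-isomorphism, since the unique simple sub- or quotient module determines the sign and the type $\eps,\sg$ as an invariant of $M$; and (c) indecomposability, which holds iff the glue parameter is non-zero, as the only alternative is the split extension $S_\sigma\oplus S_{\st}(\pm)$. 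I expect the main obstacle to be the clean execution of the cubic-relation computation, but after diagonalizing along $H_{12}$-eigenspaces it is entirely parallel to the computations already carried out in Prop.\ \ref{pro:suma de st} and Thm.\ \ref{teo:simple s3}.
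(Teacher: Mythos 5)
Your proposal is correct and follows essentially the same route as the paper: the paper likewise reduces to $M_{|\s_3}\cong W_\eps\oplus W_{\st}$ or $W_\sg\oplus W_{\st}$ and then obtains the list by specializing the system \eqref{eqn:sistema1} from the proof of Theorem \ref{teo:simple s3} to this case (your direct derivation of the constraints $\alpha\mu=0$, $c^2=d^2$, and the eigenvalue dichotomy $c=-d=\pm\im$ versus $c=d=\pm\frac\im3$ is exactly that specialization). The sub/quotient dichotomy, the normalization of the glue parameter, and the non-isomorphism check all match the intended argument.
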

\begin{proof}
It is straightforward to check that the listed objects are in fact $\mA_1$-modules and that they are not isomorphic to each other. 
Now, assume $M_{|\s_3}=W_{\eps}\oplus W_{\st}$, the other case being analogous. If $M$ is not simple,
then there is $N\subset M$ and necessarily $N_{|\s_3}=W_{\st}$ or
$N_{|\s_3}=W_{\eps}$. Then, the lemma follows specializing the
equations in \eqref{eqn:sistema1} to this case.
\end{proof}
\begin{pro}\label{pro:indesc2+1}
Let $M$ be an indecomposable non-simple $\mA_1$-module such that
$M_{|\s_3}=M[\eps]\oplus
M[\st]$, with $\dim M[\eps]=p$, $\dim M[\st]=q$ or
$M_{|\s_3}=M[\sg]\oplus
M[\st]$, with $\dim M[\sg]=p$, $\dim M[\st]=q$ for $p,q>0$. Then
$p=q=1$ and $M$ is isomorphic to one and only one of the modules defined
in Lemma \ref{lem:indesc}.
\end{pro}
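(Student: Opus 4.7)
\emph{Proof Proposal.}
By tensoring with $S_\sg$ we reduce to the case $M_{|\s_3}=M[\eps]\oplus M[\st]$; the $\sg$-case then follows symmetrically. Set $p=\dim M[\eps]$ and let $q$ denote the multiplicity of $W_\st$ in $M[\st]$.

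Following the set-up of the proof of Theorem~\ref{teo:simple s3}, I would record the action of $a_{12}$ on a basis $\{x_i\}\cup\{v_k,w_k\}$ via matrices $\beta\in\k^{p\times q}$, $a\in\k^{q\times p}$, $c,d\in\k^{q\times q}$ obeying system~\eqref{eqn:sistema}. The relation $c(-5c-4d)=\id$ makes $c$ invertible, with $d=-\frac{1}{4}(5c+c^{-1})$; combined with $d^2=c^2$ this yields $(9c^2+\id)(c^2+\id)=0$. Hence $c$ is diagonalizable with eigenvalues in $\{\pm\im,\pm\im/3\}$, and on the corresponding eigenspaces $d=-c$ (for eigenvalues $\pm\im$) or $d=c$ (for eigenvalues $\pm\im/3$).

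Decompose $M[\st]=V_-\oplus V_+$, where $V_-$ (resp.\ $V_+$) is the sum of the $\pm\im$-eigenspaces (resp.\ $\pm\im/3$-eigenspaces) of $c$. From $\beta(c+d)=0$ we deduce $\beta(M[\eps])\subseteq V_-$, and from $(c-d)a=0$ we deduce $a|_{V_-}=0$. Therefore $V_-+M[\eps]$ is an $\mA_1$-submodule, and, using $\beta a=0$, so is $V_++a(V_+)$. Choosing a $G$-equivariant complement $U$ of $a(V_+)$ in $M[\eps]$, one verifies that
$$M=(V_-\oplus U)\oplus(V_++a(V_+))$$
as $\mA_1$-modules. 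Indecomposability forces one summand to vanish, so either $V_+=0$ (whence $a\equiv 0$ and $M=V_-\oplus M[\eps]$) or $V_-=0$ (whence $\beta\equiv 0$ and $M=V_++a(V_+)$).

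On the branch $V_+=0$, $M[\st]$ is a sum of copies of $S_{\st}(\pm\im)$ and $M$ is a non-trivial extension of $S_\eps^p$ by $M[\st]$. Since $\ext^1_{\mA_1}(S_\eps,S_{\st}(\pm\im))=\k$ (the non-trivial classes are realized by $M_{\eps,\st}[\pm\im]$, Lemma~\ref{lem:indesc}), a Krull--Schmidt / rank analysis of the Ext-class modulo the natural $\aut(S_\eps^p)\times\aut(M[\st])$-action shows that the only indecomposable extension has rank one and $p=q=1$, giving $M\cong M_{\eps,\st}[\pm\im]$. The dual branch $V_-=0$ produces $M\cong M_{\st,\eps}[\pm\im/3]$ by the analogous analysis.

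The main obstacle is the final Krull--Schmidt reduction: one must control how the distinct $\mA_1$-isotypes $S_{\st}(\im)$ and $S_{\st}(-\im)$ (respectively $S_{\st}(\pm\im/3)$) can simultaneously appear in an indecomposable extension of $S_\eps^p$. This step rests on the precise structure of the Ext-class together with the action of the automorphism groups on either side, and is the most delicate part of the argument; the earlier $V_\pm$ splitting, while slightly involved, is essentially forced by the orthogonality relations $\beta a=a\beta=0$ in~\eqref{eqn:sistema}.
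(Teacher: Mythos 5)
Your argument tracks the paper's own proof closely through its first stage: the reduction to the $\eps$-case by tensoring with $S_{\sg}$, the use of system \eqref{eqn:sistema}, the factorization $(9c^2+\id)(c^2+\id)=0$ forcing $c$ to be diagonalizable with $d=-c$ on the $\pm\im$-eigenspaces and $d=c$ on the $\pm\frac\im3$-eigenspaces, and the splitting $M=(V_-\oplus U)\oplus\bigl(V_+ + a(V_+)\bigr)$ are all correct, and are essentially a coordinate-free rephrasing of the paper's block-matrix computation with $\beta=\bigl(\begin{smallmatrix}\beta_1&0\\ \beta_2&0\end{smallmatrix}\bigr)$ and $a=\bigl(\begin{smallmatrix}0&0\\ a_1&a_2\end{smallmatrix}\bigr)$. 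The genuine gap is exactly the step you defer. On the branch $V_+=0$ the remaining datum is a class in $\ext^1_{\mA_1}\bigl(S_\eps^p,\,S_{\st}(\im)^r\oplus S_{\st}(-\im)^s\bigr)$, and since $S_{\st}(\im)\not\cong S_{\st}(-\im)$ the group available for the normal-form reduction is $GL_p\times(GL_r\times GL_s)$, not $GL_p\times GL_{r+s}$. The resulting matrix problem is that of the $A_3$-quiver with one source and two sinks, which has an indecomposable representation of dimension vector $(1;1,1)$. So the ``rank analysis'' cannot yield $p=q=1$, and the issue you flag as ``the most delicate part'' is not a technical nuisance but the precise point where the argument --- and in fact the statement itself --- breaks down.

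Concretely, let $M=\k\{x\}\oplus\k\{v_1,w_1\}\oplus\k\{v_2,w_2\}$ with $\langle x\rangle\cong W_\eps$, each $\k\{v_k,w_k\}\cong W_{\st}$, the $a_{12}$-action of Lemma \ref{lem:Sst(i)} (i) on $\k\{v_1,w_1\}$ and (ii) on $\k\{v_2,w_2\}$, and $a_{12}\cdot x=(v_1-w_1)+(v_2-w_2)$. The only constraint that \eqref{eqn:sistema} imposes on $\beta$ here is $\beta(c+d)=0$, which is vacuous because $c+d=0$ on both summands; one checks directly that $a_{12}^2x=0$ (since $a_{12}(v_k-w_k)=0$ on $S_{\st}(\pm\im)$) and that the quadratic relation on $x$ reads $0=0$. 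This $M$ is a non-simple $\mA_1$-module with $M_{|\s_3}=M[\eps]\oplus M[\st]$, $p=1$, $\dim M[\st]=4$, and it is indecomposable: every proper nonzero submodule is one of $S_{\st}(\im)$, $S_{\st}(-\im)$, $M[\st]$, because any submodule not contained in $M[\st]$ contains $x$, and $x$ generates $M$ (e.g.\ $a_{13}x=-(v_1+v_2)$ and $a_{12}(v_1+v_2)=\im(v_1-w_1)-\im(v_2-w_2)$ together with the $G$-action span $M[\st]$). The paper's own proof has the same defect at the corresponding spot: it re-chooses the basis of $M[\st]$ so that $a_{12}\cdot x_i=v_i-w_i$, which is only legitimate when the base change commutes with $\delta=\diag(\pm\im,\dots)$, i.e.\ when a single isotype $S_{\st}(\im)$ or $S_{\st}(-\im)$ occurs. (The conclusion does hold when $\dim M\le 4$, which is all that is used later for the Ext-quiver, but your proposal as written does not prove the proposition, and no completion of its final step can.)
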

\begin{proof}
We work with the case $M_{|\s_3}=M[\eps]\oplus
M[\st]$, with $\dim M[\eps]=p$, $\dim M[\st]=q$,
$p,q\geq 1$, the other resulting from this one by tensoring with
$S_{\sg}$. Let $M[\eps]=\k\{x_i\}_{i=1}^p$, $M[st]=\k\{v_i,w_i\}_{i=1}^q$
and $a, \beta, c, d$ be as in the proof of Th.
\ref{teo:simple s3}. Recall that they satisfy the system of
equations \eqref{eqn:sistema}. The last three
conditions from that system imply, as in the proof of Prop.
\ref{pro:suma de st}, that $c, d$ may be chosen as
\begin{equation*}
 \begin{array}{cc}
  c=\begin{pmatrix}
    \delta & 0\\ 0 &\delta'
  \end{pmatrix}, &
d=\begin{pmatrix}
   -\delta & 0\\ 0 &\delta'
  \end{pmatrix},
 \end{array}
\end{equation*}
for $\delta\in\k^{q_1\times q_1}$, $\delta'\in\k^{q_2\times q_2}$ diagonal
matrices with eigenvalues in $\{\pm\im\}$ and $\{\pm\dfrac\im3\}$, respectively,
$q_1+q_2=q$. Consequently,
\begin{equation*}
 \begin{array}{cc}
  \beta=\begin{pmatrix}
    \beta_1 & 0\\ \beta_2 &0
  \end{pmatrix}, &
a=\begin{pmatrix}
   0 & 0\\ a_1 & a_2
  \end{pmatrix}, \text{ with } a_1\beta_1+a_2\beta_2=0,
 \end{array}
\end{equation*}
\begin{equation*}
   a_{12}=\begin{pmatrix}
    0 & 0 & 0 &\,^ta_1 & 0& -\,^ta_1 \\
    0 & 0 & 0 &\,^ta_2 & 0& -\,^ta_2 \\
    \,^t\beta_1 & \,^t\beta_2 & \delta & 0 & \delta &0 \\
   0 & 0 & 0 & \delta' & 0& -\delta' \\
    -\,^t\beta_1 & -\,^t\beta_2 & -\delta & 0 & -\delta &0 \\
   0 & 0 & 0 & \delta' & 0& -\delta' \\
  \end{pmatrix}.
\end{equation*}
Assume $q_2>0$. In this case,
$\tilde{a}=\left(\begin{smallmatrix}\,^ta_1 \\ \,^ta_2
\end{smallmatrix}\right)$ must be injective. Otherwise, we may
change the basic elements $\{v_{q_1+1},\ldots, v_{q},
w_{q_1+1},\ldots, w_{q}\}$ in such a way that, for some $q_1+1\leq
r< q$, the last $q-r$ columns of $\tilde{a}$ are null and in that
case $$M=\langle v_{q_1-r+1},\ldots,v_q\rangle\oplus\langle x_i,v_j:
i=1,\ldots,p; j=1,\ldots, q-r\rangle.$$ Thus $\tilde{a}$ is injective.
Change the basis $\{x_i:i=1,\ldots, p\}$ in such a way that $$a_{12}\cdot
v_{q_1+i}=x_i+\frac\im3(v_{q_1+i}+w_{q_1+i}), \quad i=1,\ldots, q_2.$$
Notice that, as $a_{12}(v_{q_1+i}+w_{q_1+i})=0$ for every $i$ and
$a_{12}^2=0$, then $a_{12}\cdot x_i=0$, $i=1,\ldots, q_2$. But then $$
M=\bigoplus_{i=1}^{q_2}\langle x_i, v_{q_1+i}\rangle\oplus\langle
x_{q_2+1},\ldots, x_p, v_1,\ldots, v_{q_1}\rangle.$$
Therefore, if $q_2>0$ and $M$ is indecomposable, then
$q_1=0$, $p=q_2=1$, and this gives us the modules in the first item of Lemma
\ref{lem:indesc}.

Analogously, if $q_1>0$,
$\tilde{\beta}=\left(\begin{smallmatrix}\,^t\beta_1 & \,^t\beta_2
\end{smallmatrix}\right)$ must be injective, and $q_2=0$. If
$v_1,\ldots, v_p$ are chosen in such a way that $a_{12}\cdot
x_i=v_i-w_i$, $i=1,\ldots, p$, then $ M=\bigoplus_{i=1}^p\langle
x_i,v_i\rangle\oplus\bigoplus_{i=p+1}^{q_1}\langle v_i\rangle
$
and therefore $p=q_1=1$, giving the modules in the third item of the lemma. The
modules in the other two items result from these ones by tensoring with
$S_{\sg}$.
\end{proof}

\subsubsection{Tensor product of simple $\mA_1$-modules}
Here we compute the tensor product of two given simple
$\mA_1$-modules, and show that it turns out to be an indecomposable
module.

First, we list all of the indecomposable $\mA_1$-modules of
dimension 4. Notice that if $M$ is such an indecomposable module,
then we necessarily must have $M_{|\s_3}=W_{\eps}\oplus
W_{\sg}\oplus W_{\st}$, by Props. \ref{pro:suma de st} and
\ref{pro:indesc2+1}. In the canonical basis, the matrix of $a_{12}$ is
given by
$$[a_{12}]=\begin{pmatrix}0 & \gamma & a&-a\\
\alpha& 0& b&b\\
\beta&\eta&c&-d\\
-\beta&\eta&d&-c\\
  \end{pmatrix},$$
for some $\alpha,\gamma,a,b\in\k$ and $c=d=\pm\frac\im3$ or
$c=-d=\im$. For every $c=\theta\in\{\pm\im,\pm\frac\im3\}$ and for
each collection $(\alpha,\beta,\gamma,\eta,a,b)$ which defines
representation, we denote by
$M(\alpha,\beta,\gamma,\eta,a,b)[\theta]$ the corresponding module.

\medbreak

\begin{pro}\label{pro:indesc4}\quad
\noindent
\begin{itemize}
\item Let $\theta=\pm\frac\im3$. There are exactly four non-isomorphic
indecomposable modules
$M(\alpha,\beta,\gamma,\eta,a,b)[\pm\frac\im3]$. They are defined
for $(\alpha,\beta,\gamma,\eta,a,b)$ in the following list:
\begin{enumerate}
\smallskip
  \item $(0,0,1,0,1,0)$,
\smallskip
  \item $(0,0,1,1,0,0)$,
\smallskip
  \item $(1,0,0,0,\mp\frac{2\im}3,1)$,
\smallskip
  \item $(1,1,0,\mp\frac{2\im}3,0,0)$.
\end{enumerate}
\smallskip
  \item Let $\theta=\pm\im$. There are exactly four non-isomorphic
indecomposable modules $M(\alpha,\beta,\gamma,\eta,a,b)[\pm\im]$.
They are defined for  $(\alpha,\beta,\gamma,\eta,a,b)$ in the
following list:
\begin{enumerate}
\smallskip
\item $(1,0,0,0,0,1)$,
  \smallskip
\item $(1,1,0,0,0,0)$,
  \item $(0,\mp2\im,1,1,0,0)$,
\smallskip
  \item $(0,0,1,0,1,\mp2\im)$.
\end{enumerate}
\end{itemize}
\end{pro}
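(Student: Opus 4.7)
The plan is a direct reduction-and-case-analysis argument based on the constraint system already extracted in the proof of Theorem~\ref{teo:simple s3}. By Prop.~\ref{pro:suma de st}, any $\mA_1$-module supported purely on copies of $W_{\st}$ of dimension greater than $2$ is semisimple, and by Prop.~\ref{pro:indesc2+1} any indecomposable module supported on only two of the three isotypic components has dimension at most $3$. Hence any $4$-dimensional indecomposable module $M$ must satisfy $M_{|\s_3} = W_\eps \oplus W_\sg \oplus W_\st$, so in the notation of Theorem~\ref{teo:simple s3} we have $n = m = p = 1$ and the quantities $\alpha, \beta, \gamma, \eta, a, b, c, d$ are all scalars.

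The final three lines of \eqref{eqn:sistema1} then force $a\beta = b\eta = 0$, $c^2 = d^2$, and $5(c^2 + d^2) + 4(cd + dc) = -2$, giving $(c, d) \in \{(\theta,\theta) : \theta = \pm\tfrac{\im}{3}\} \cup \{(\theta,-\theta) : \theta = \pm\im\}$; we set $\theta = c$ as in the statement. The residual equations of \eqref{eqn:sistema1} become, in Case~A ($\theta = \pm\tfrac{\im}{3}$, so $c = d$): $\alpha\gamma = \gamma\beta = b\gamma = 0$, $\beta a = \beta b = \eta a = \eta b = 0$, together with the couplings $\alpha\eta = -2\theta\beta$ and $a\alpha = -2\theta b$; and in Case~B ($\theta = \pm\im$, so $c = -d$): $\alpha\gamma = \alpha\eta = a\alpha = 0$, $\beta a = \beta b = \eta a = \eta b = 0$, together with the couplings $\gamma\beta = -2\theta\eta$ and $b\gamma = -2\theta a$.

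For each $\theta$ I would enumerate the solutions of the resulting polynomial system and normalize via the torus $(\k^*)^3$ of rescalings $x \mapsto \lambda_1 x$, $y \mapsto \lambda_2 y$, $(v,w) \mapsto \lambda_3 (v,w)$, which are $\s_3$-module isomorphisms and act on the parameter tuple by monomial weights. The coupling equations fix the remaining entries once the support pattern of the tuple is chosen, and a configuration yields a decomposable module precisely when the non-zero parameters respect a non-trivial splitting of $W_\eps \oplus W_\sg \oplus W_\st$ into two $G$-invariant, $a_{12}$-stable subspaces; discarding those patterns leaves exactly the four listed normalized tuples in each case, and the normalization simultaneously certifies non-isomorphism across the list.

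The main obstacle is the bookkeeping for the case split. For instance, in Case~A the orthogonality $\gamma\beta = 0$ together with $\alpha\eta = -2\theta\beta$ and $a\alpha = -2\theta b$ dictate a clean dichotomy: if $\gamma \neq 0$ then $\alpha = \beta = b = 0$, and the surviving constraint $\eta a = 0$ produces the two configurations $(0,0,1,0,1,0)$ and $(0,0,1,1,0,0)$; if $\alpha \neq 0$ then $\gamma = 0$ and the couplings produce the two remaining configurations $(1,0,0,0,\mp\tfrac{2\im}{3},1)$ and $(1,1,0,\mp\tfrac{2\im}{3},0,0)$. Case~B is handled symmetrically with the roles of $(\alpha, b, \beta)$ and $(\gamma, a, \eta)$ essentially interchanged. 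Once the enumeration is complete, indecomposability of each of the resulting modules is immediate, since the chosen non-zero entries connect all three $G$-isotypic components under the joint action of $a_{12}$ and $\k\s_3$.
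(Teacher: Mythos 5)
Your proposal is correct and follows essentially the same route as the paper: reduce to $M_{|\s_3}=W_\eps\oplus W_\sg\oplus W_\st$ via Props.~\ref{pro:suma de st} and \ref{pro:indesc2+1}, specialize the system \eqref{eqn:sistema1} to scalars, split into the cases $c=d=\pm\frac\im3$ and $c=-d=\pm\im$ (your residual/coupling equations match the paper's, and in fact agree with \eqref{eqn:sistema1} where the paper's displayed specialization has a harmless factor-of-$2$ typo), and enumerate solutions up to rescaling of the isotypic components, discarding the decomposable support patterns. The only point stated more briefly than it deserves is the final non-isomorphism claim, but your observation that any isomorphism acts through the rescaling torus (so distinct support patterns of normalized tuples give non-isomorphic modules) closes it.
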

The next proof is essentially interpreting the equations \eqref{eqn:sistema1}
in this case.
\begin{proof}
We have the following identities
\begin{equation}\label{eqn:identities}
  \alpha\gamma=\gamma\alpha=0,\quad
\beta a=\beta b=\eta a=\eta b=0.
\end{equation}
Assume $c=d=\pm\frac\im3$, then to the equations listed above we must add:
\begin{equation*}
 0=2\beta c+\alpha\eta=a\alpha+2cb, \quad
0=\gamma\beta=b\gamma.
\end{equation*}
We compute the solutions. Notice that  $\alpha=0\Rightarrow\beta=0\Rightarrow b=0\Rightarrow\eta a=0$.
Then according to $\eta=0$ or $a=0$ we have:
\begin{equation*}
 \begin{array}{cc}
  \begin{cases}
   a_{12}\cdot x=0, \\
a_{12}\cdot y=\gamma x,\\
 a_{12}\cdot v=ax+c(v+w)
  \end{cases}
& \text{or}\quad
  \begin{cases}
   a_{12}\cdot x=0,\\ a_{12}\cdot y=\gamma x+\eta(v+w),\\
a_{12}\cdot v=c(v+w).
  \end{cases}
 \end{array}
\end{equation*}
Notice that, in any case, we cannot have $\gamma=0$, otherwise the
module would decompose. We may thus assume $\gamma=1$, changing $y$
by $\frac1\gamma y$. For the same reason, we cannot have $a=\eta=0$.
In the first case, we may take $a=1$, changing $v$ by $\frac1av$ and
in the second case, changing $v$ by $\eta v$ we may take $\eta=1$.

On the other hand, $\gamma=0\Rightarrow \alpha\neq 0$; and,
according to $\beta=0$ or $\beta\neq 0$,
\begin{align*}
\beta&=0\Rightarrow\begin{cases}
  a_{12}\cdot x=\alpha y, \\
   a_{12}\cdot y=0\\
   a_{12}\cdot v=ax+by+c(v+w),\quad  \text{for } a=-2cb\alpha^{-1}
\end{cases}\\
\\
\beta &\neq 0\Rightarrow \begin{cases}
 a_{12}\cdot x=\alpha y+\beta (v-w), \\
 a_{12}\cdot y=\eta(v+w),\\
  a_{12}\cdot v=c(v+w),\hskip2.1cm  \text{for }
\eta=-2\beta c\alpha^{-1}.
\end{cases}
\end{align*}
In the first case we may assume $\alpha=b=1$, and thus $a=-2c$ and,
in the second, $\alpha=\beta=1$, and thus $\eta=-2c$.

Assume now $c=-d=\pm\im$, then to the identities \eqref{eqn:identities} we had
we must add:
\begin{equation*}
  \begin{cases}
0=2b\gamma+2ca=\gamma\beta+2c\eta\\
0=a\alpha=\alpha\eta.
 \end{cases}
\end{equation*}

We find the solutions:
 \begin{align*}
&  \text{(i)} \begin{cases}
  a_{12}\cdot x=\alpha y,\\  a_{12}\cdot y=0,\\
a_{12}\cdot
v=by+c(v-w).
       \end{cases}
     && \text{(ii)} \begin{cases}  a_{12}\cdot x=\alpha y+\beta(v-w),\\ 
a_{12}\cdot
y=0,\\
a_{12}\cdot
v=c(v-w).
\end{cases}  \\ & && \\
   &  \text{(iii)} \begin{cases} a_{12}\cdot x=\beta(v-w),\\  a_{12}\cdot
y=\gamma
x+\eta(v+w),\\
     a_{12}\cdot v=c(v-w), \\
\beta=-2\eta
    c\gamma^{-1}.
\end{cases}
 & &     \text{(iv)} \begin{cases} a_{12}\cdot x=0,\\  a_{12}\cdot y=\gamma x,\\
  a_{12}\cdot v=ax+by+c(v-w),
\\
    b=-2ca\gamma^{-1}.
\end{cases}
            \end{align*}
Therefore, changing conveniently the basis on each case (by scalar
multiple of its components), we have the four modules from the
second item.
\end{proof}

Let $\sg:\im\mathbb{R}\to\{\pm 1\}$, $\sg(\im t)=\sg(t)$.

\begin{pro}\label{pro:tensor}
The following isomorphisms hold:
\begin{enumerate}
  \item $S_\eps\ot S\cong S\cong S\ot S_\eps$ for every simple
$\mA_1$-module $S$;
\item $S_{\sg}\ot S_{\st}(\theta)\cong S_{\st}(\vartheta)$, for
$\theta,\vartheta\in\{\pm\im,\pm\frac\im3\}$ with
$\sg(\theta)=\sg(\vartheta)$, $|\theta|\neq|\vartheta|$;
\item $S_{\st}(\theta)\ot S_{\sg}\cong S_{\st}(\vartheta)$, for
$\theta,\vartheta\in\{\pm\im,\pm\frac\im3\}$ with
$\sg(\theta)=-\sg(\vartheta)$, $|\theta|\neq|\vartheta|$.
\item \begin{itemize}
 \item $ S_{\st}(\im)\ot S_{\st}(\im)\cong S_{\st}(-\frac\im3)\ot
S_{\st}(\frac\im3)\cong M(0,2\im,1,1,0,0)[-\im]$,
\vskip.2cm
 \item $ S_{\st}(\im)\ot S_{\st}(-\im)\cong S_{\st}(-\frac\im3)\ot
S_{\st}(-\frac\im3)\cong M(1,0,0,0,-2\frac\im3,1)[\frac\im3]$,
\vskip.2cm
 \item $ S_{\st}(\im)\ot S_{\st}(\frac\im3)\cong S_{\st}(-\frac\im3)\ot
S_{\st}(\im)\cong M(0,0,1,0,1,2\im)[-\im]$,
\vskip.2cm
 \item $ S_{\st}(\im)\ot S_{\st}(-\frac\im3)\cong S_{\st}(-\frac\im3)\ot
S_{\st}(-\im)\cong M(1,1,0,-2\frac\im3,0,0)[\frac\im3]$,
\vskip.2cm
 \item $ S_{\st}(-\im)\ot S_{\st}(\im)\cong S_{\st}(\frac\im3)\ot
S_{\st}(\frac\im3)\cong M(1,0,0,0,2\frac\im3,1)[-\frac\im3]$,
\vskip.2cm
\item $ S_{\st}(-\im)\ot S_{\st}(-\im)\cong S_{\st}(\frac\im3)\ot
S_{\st}(-\frac\im3)\cong M(0,-2\im,1,1,0,0)[\im]$,
\vskip.2cm
\item $ S_{\st}(-\im)\ot S_{\st}(\frac\im3)\cong S_{\st}(\frac\im3)\ot
S_{\st}(\im)\cong M(1,1,0,2\frac\im3,0,0)[-\frac\im3]$,
\vskip.2cm
\item $
S_{\st}(-\im)\ot S_{\st}(-\frac\im3)\cong S_{\st}(\frac\im3)\ot
S_{\st}(-\im)\cong M(0,0,1,0,1,-2\im)[\im]$.
\end{itemize}
\end{enumerate}
\end{pro}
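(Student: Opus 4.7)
The main tool is the coproduct of $\mA_1$: $\Delta(H_t)=H_t\ot H_t$ and $\Delta(a_i)=g_i\ot a_i+a_i\ot 1$. Thus for any two $\mA_1$-modules $M,N$, the $\s_3$-action on $M\ot N$ is the usual tensor product action, while $a_i$ acts on simple tensors by $a_i(m\ot n)=H_{g_i}m\ot a_in+a_im\ot n$. By Remark \ref{rem:Ht y a12}, it suffices to read off the action of $a_{12}$ in each case. Item (i) is then immediate: since $a_i$ acts as zero on $S_\eps$ and $\eps(t)=1$ for every $t$, the coproduct gives back the original $\mA_1$-action on $S$.

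For items (ii) and (iii), recall that $W_\sg\ot W_\st\cong W_\st\cong W_\st\ot W_\sg$ as $\s_3$-modules, so each of these tensor products is a 2-dimensional $\mA_1$-module of isotype $\st$. By Proposition \ref{pro:suma de st} such a module must be isomorphic to one of $S_\st(\pm\im),S_\st(\pm\frac\im3)$. I would fix in each case an explicit basis $\{V,W\}$ realizing the canonical matrices of Lemma \ref{lem:Sst(i)}---for instance, in case (ii), $V=y\ot v-2y\ot w$ and $W=2y\ot v-y\ot w$, which satisfy $H_{12}V=W$ and $H_{23}V=V-W$---and then compute $a_{12}V$ via the coproduct, using $a_{12}y=0$ and $H_{12}y=-y$. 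Reading off the coefficient $\vartheta$ produces, in each of the eight cases, the sign rule $\sg(\theta)=\sg(\vartheta)$ for left-tensoring and $\sg(\theta)=-\sg(\vartheta)$ for right-tensoring, with $|\theta|\neq|\vartheta|$ as asserted.

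For item (iv), the Clebsch--Gordan decomposition $W_\st\ot W_\st\cong W_\eps\oplus W_\sg\oplus W_\st$ gives $S_\st(\theta)\ot S_\st(\vartheta)$ an $\s_3$-isotype structure $M[\eps]\oplus M[\sg]\oplus M[\st]$ with dimensions $1,1,2$. I would fix a basis $\{x,y,V,W\}$ compatible with this decomposition: $y=v_1\ot w_2-w_1\ot v_2$ for the sign piece, the symmetric invariant $x=-2v_1\ot v_2+v_1\ot w_2+w_1\ot v_2-2w_1\ot w_2$ for the trivial piece, and a suitable combination (e.g.\ $V=v_1\ot v_2-v_1\ot w_2-w_1\ot v_2$, $W=H_{12}V$) realizing the canonical standard action. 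I would then compute $a_{12}$ on each basis vector via $\Delta(a_{12})$, express the result in the basis $\{x,y,V,W\}$, and read off the tuple $(\alpha,\beta,\gamma,\eta,a,b)$ together with the value of $c$. Matching against the eight items in Proposition \ref{pro:indesc4} then produces the claimed isomorphism. The sixteen tensor products collapse into eight isomorphism classes precisely because Proposition \ref{pro:indesc4} admits only four indecomposables for each $c\in\{\pm\im,\pm\frac\im3\}$, so the pairing is forced by counting.

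The principal obstacle is essentially bookkeeping, together with certifying indecomposability. For the latter, observe that any proper direct summand decomposition of $S_\st(\theta)\ot S_\st(\vartheta)$ must, by the $\s_3$-isotype structure combined with Propositions \ref{pro:suma de st}, \ref{pro:indesc2+1} and \ref{pro:1sn}, take one of the forms $S_\eps\oplus N$, $S_\sg\oplus N$, or $M[\st]\oplus M''$ with $M''$ of type $W_\eps\oplus W_\sg$; these force respectively $a_{12}x=0$, $a_{12}y=0$, or simultaneously $a=b=0$ and $\beta=\eta=0$. The explicit formulas produced by $\Delta(a_{12})$ in the chosen basis show that none of these vanishing conditions occurs for any of the sixteen tensor products, so each is indecomposable and its isomorphism class is uniquely determined by the computed parameters.
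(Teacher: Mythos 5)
Your proposal is correct and follows essentially the same route as the paper: use the coproduct to compute the $a_{12}$-action on tensor products, pick the same canonical bases for the $\s_3$-isotypic pieces (your $V=y\ot v-2y\ot w$, $W=2y\ot v-y\ot w$ and the Clebsch--Gordan basis for $W_\st\ot W_\st$ coincide up to scalars with the paper's), and read off the parameters against Lemma \ref{lem:Sst(i)} and Proposition \ref{pro:indesc4}. Your closing remarks on certifying indecomposability are a harmless addition (the paper delegates this to Proposition \ref{pro:indesc4}); only the aside that the pairing is ``forced by counting'' should be dropped, since the identification genuinely requires the explicit computation rather than a counting argument.
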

\begin{proof}
Item (i) is immediate. 

We check item (ii): let
$\theta\in\{\pm\im,\pm\frac\im3\}$,
$S_{\sg}=\k\{z\}$; $S_{\st}(\theta)=\k\{v,w\}$, $a_{12}\cdot v=cv+dw$. Then
$(S_{\sg}\ot S_{\st})_{|\s_3}=W_{\st}$
with the canonical basis given by $$u=z\ot v-2z\ot w, \quad t=2z\ot
v-z\ot w, $$
and then
$$
a_{12}u=\frac{5c+4d}3u-\frac{4c+5d}3t.
$$
Thus, the claim
follows according to $c=\pm\im$ or $c=\pm\frac\im3$. 

Item (iii) follows
analogously: in this case $$u=v\ot
z-2w\ot z\quad \text{and}\quad a_{12}u=-\frac{5c+4d}3u+\frac{4c+5d}3t.$$

Now, we have to compute $S_{\st}(\theta)\ot S_{\st}(\vartheta)$, for
$\theta,\vartheta\in\{\pm\im,\pm\frac\im3\}$. Let
$S_{\st}(\theta)=\k\{v,w\}, S_{\st}(\vartheta)=\k\{v',w'\}$, $a=v\ot
v', b=v\ot w', c=w\ot v', d=w\ot w'$. First,
\begin{align*}
W_{\st}\ot
W_{\st}&\cong W_{\eps}\oplus W_{\sg}\oplus W_{\st}
= \k\{x\}\oplus \k\{y\}\oplus \k\{v,w\},
\end{align*}
for $x=2a-b-c+2d$, $y=b-c$, $v=a-b-c, w=d-b-c$. Now, if $a_{12}\cdot
v=\alpha v+\beta w$ and $a_{12}\cdot v'=\alpha' v'+\beta' w'$, then
\begin{align*}
  &a_{12}\cdot a=\alpha a+(\beta+\alpha')c+\beta'd,\quad a_{12}\cdot b=\alpha
b-\beta'c+(\beta-\alpha')d,\\
&a_{12}\cdot c=(\alpha'-\beta)a+\beta'b-\alpha c,\quad a_{12}\cdot
d=-\beta'a-(\alpha'+\beta)b-\alpha d;\\
\end{align*}
 and thus
\begin{align*}
  &a_{12}\cdot
x=(-\alpha-2\beta-2\alpha'-\beta')y+(2\alpha+\beta-\alpha'-2\beta')(v-w),\\
&a_{12}\cdot
y=\frac13(\alpha+2\beta-2\alpha'-\beta')x+(-2\alpha-\beta+\alpha'+2\beta')(v+w),
\\
&a_{12}\cdot
v=\frac16(2\alpha+\beta+\alpha'+2\beta')x+\frac12(-2\alpha-\beta-\alpha'-2\beta'
)y\\
&\qquad\qquad\qquad+\frac13(\alpha+2\beta-4\alpha'-2\beta')v+\frac13(-2\alpha-4
\beta+2\alpha'+\beta')w.
\end{align*}
For each $\theta,\vartheta\in\{\pm\im\pm\frac\im3\}$, we get the
identities in item (iv) by inserting the corresponding values of
$\alpha,\alpha',\beta,\beta'$.
\end{proof}
\begin{cor}
 $\mA_1$ is not quasitriangular.
\end{cor}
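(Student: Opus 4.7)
The plan is to show that the tensor product of $\mA_1$-modules fails to be symmetric, which contradicts quasitriangularity. Recall that if $H$ is quasitriangular with universal R-matrix $R$, the map $c_{V,W}(v\otimes w) = \tau(R\cdot(v\otimes w))$ provides an isomorphism $V\otimes W\cong W\otimes V$ of $H$-modules for any pair of modules $V,W$. So it suffices to exhibit two $\mA_1$-modules whose tensor products in the two orders are not isomorphic.

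The natural candidates are the four simple modules $S_{\st}(\theta)$ of dimension two, since for each of them the tensor products with another have been explicitly computed in the previous Proposition. I would inspect the list in item (iv) to look for asymmetries. Taking for instance $V=S_{\st}(\im)$ and $W=S_{\st}(-\im)$, the Proposition gives
\[
V\otimes W \cong M(1,0,0,0,-\tfrac{2\im}{3},1)[\tfrac{\im}{3}], \qquad
W\otimes V \cong M(1,0,0,0,\tfrac{2\im}{3},1)[-\tfrac{\im}{3}].
\]
Both are indecomposable of dimension $4$, with underlying $\s_3$-module $W_\eps\oplus W_{\sg}\oplus W_{\st}$, but they sit in different families of the classification given in Proposition \ref{pro:indesc4}: the parameter $\theta$ on which the isotypic component $M[\st]$ depends takes the value $\tfrac{\im}{3}$ for $V\otimes W$ and $-\tfrac{\im}{3}$ for $W\otimes V$. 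Since Proposition \ref{pro:indesc4} asserts that the modules $M(\alpha,\beta,\gamma,\eta,a,b)[\theta]$ listed there are pairwise non-isomorphic, the two tensor products are not isomorphic as $\mA_1$-modules.

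There is no real obstacle here: the hard work is entirely contained in the classification of $4$-dimensional indecomposables (Prop. \ref{pro:indesc4}) and the explicit computation of the tensor products (Prop. \ref{pro:tensor}); once those are in hand, the argument is just the observation that the invariant $\theta$ distinguishes $V\otimes W$ from $W\otimes V$. One could equivalently use any of the other pairs in item (iv) whose two orderings land in incompatible families (e.g.\ $S_{\st}(\im)\otimes S_{\st}(\im)$ vs.\ $S_{\st}(\im)\otimes S_{\st}(\im)$ via comparing with $S_{\st}(-\im)\otimes S_{\st}(-\im)$), but the pair above is the cleanest. Hence $\mA_1$ cannot admit a universal R-matrix.
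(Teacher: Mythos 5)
Your argument is correct and rests on the same principle as the paper's: quasitriangularity would force $V\ot W\cong W\ot V$ for all modules, and Proposition \ref{pro:tensor} supplies a counterexample. The difference is only in the choice of witnesses. The paper invokes items (ii) and (iii) of Proposition \ref{pro:tensor}: $S_{\sg}\ot S_{\st}(\im)\cong S_{\st}(\frac{\im}{3})$ while $S_{\st}(\im)\ot S_{\sg}\cong S_{\st}(-\frac{\im}{3})$, and these are distinct members of the already-established list of pairwise non-isomorphic simples, so nothing further is needed. You instead use item (iv), comparing $S_{\st}(\im)\ot S_{\st}(-\im)$ with $S_{\st}(-\im)\ot S_{\st}(\im)$; this is also valid, but it forces you to distinguish two $4$-dimensional indecomposables, and Proposition \ref{pro:indesc4} as stated only asserts non-isomorphism \emph{within} each family $[\pm\frac{\im}{3}]$, not across the two values of $\theta$. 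The cleanest way to close that small gap is to observe that $\theta$ records the composition factor $S_{\st}(\theta)$ sitting inside $M(\alpha,\beta,\gamma,\eta,a,b)[\theta]$, so modules with $\theta=\frac{\im}{3}$ and $\theta=-\frac{\im}{3}$ have different Jordan--H\"older constituents. One more minor point: your closing parenthetical suggesting a comparison of $S_{\st}(\im)\ot S_{\st}(\im)$ with itself is vacuous (for $V=W$ the two orderings coincide), but it is an inessential aside. In sum, both proofs work; the paper's choice of example is more economical because it stays entirely within the simple modules.
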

\begin{proof}
 If $H$ is a quasitriangular Hopf algebra and $M,N$ are $H$-modules, then $M\ot
N\cong N\ot M$ as $H$-modules. We see that this does not hold for $\mA_1$, from,
for instance, the second item of Prop. \ref{pro:tensor}.
\end{proof}

\subsubsection{Projective covers}
Recall that a
linear basis for $\mA_1$ is given by the set
$S=\{xH_t\,|\,x\in
 X,t\in\s_3\}$, where
$X=\{1,a_{12},a_{13},a_{23},a_{12}a_{13},a_{12}a_{23},a_{13}a_{23},\\
a_{13}a_{12},a_{12}a_{13}a_{23},a_{12}a_{13}a_{12},
a_{13}a_{12}a_{23},a_{12}a_{ 13}a_{12}a_{23}\}$ \cite{AG2}.
\begin{pro}\label{pro:projcover}
$  I_\chi$ is the projective cover of $S_\chi$,
$\chi\in\{\eps,\sg\}$.
\end{pro}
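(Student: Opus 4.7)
The plan is to apply Proposition~\ref{pro:proj}(ii). Since $S_\chi{}_{|\s_3}=W_\chi$ for $\chi\in\{\eps,\sg\}$ by construction, that proposition yields that $P(S_\chi)$ is a direct summand of $I_\chi$, and identifies the two provided that $I_\chi$ is indecomposable. Thus the proof reduces to showing that $I_\chi$ is an indecomposable $\mA_1$-module.

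To verify indecomposability, I would compute the head $I_\chi/\rad I_\chi$ via Frobenius reciprocity. Since $I_\chi=\ind_{\k\s_3}^{\mA_1}W_\chi$, for every $\mA_1$-module $N$ one has
$$
\Hom_{\mA_1}(I_\chi,N)\cong\Hom_{\k\s_3}(W_\chi,N_{|\s_3}).
$$
Applying this to each simple $\mA_1$-module classified in Theorem~\ref{teo:simple s3}, whose restrictions to $\k\s_3$ are $W_\eps$, $W_\sg$ and $W_\st$ (the latter with one copy per value of $\theta\in\{\pm\im,\pm\frac\im3\}$), and using that $\{W_\eps,W_\sg,W_\st\}$ is a complete set of pairwise non-isomorphic simple $\k\s_3$-modules, one obtains that for $\chi\in\{\eps,\sg\}$,
$$
\dim\Hom_{\mA_1}(I_\chi,S)=\delta_{S,S_\chi}
$$
as $S$ ranges over the simple $\mA_1$-modules.

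Now, since $I_\chi$ is projective, the multiplicity of a simple module $S$ in its head $I_\chi/\rad I_\chi$ equals $\dim\Hom_{\mA_1}(I_\chi,S)$. Hence the head of $I_\chi$ is the simple module $S_\chi$. A finite-dimensional projective module with simple head is necessarily indecomposable, since any non-trivial direct sum decomposition would induce a corresponding decomposition of the head. Therefore $I_\chi\cong P(S_\chi)$, as claimed. There is no real technical obstacle here: the Frobenius reciprocity computation is immediate, and the only external ingredient is the list of restrictions of simple modules to $\k\s_3$ furnished by Theorem~\ref{teo:simple s3}; the key conceptual point is that no simple $\mA_1$-module other than $S_\chi$ itself contains $W_\chi$ in its $\k\s_3$-restriction.
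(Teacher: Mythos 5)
Your proof is correct, but it takes a genuinely different route to the key point (indecomposability of $I_\chi$) than the paper does. Both arguments begin identically, by invoking Proposition~\ref{pro:proj}(ii) to reduce to showing $I_\chi$ is indecomposable. The paper then proceeds by brute force: it writes down an explicit $12$-element basis of $I_\eps$ adapted to its decomposition into simple $\k\s_3$-modules, exhibits the matrix of $a_{12}$ in that basis, and chases the action of $a_{12}$ to show that any direct summand containing a vector with nonzero component in the top copy of $W_\eps$ must be all of $I_\eps$. You instead argue conceptually: the adjunction $\Hom_{\mA_1}(\ind_{\k\s_3}^{\mA_1}W_\chi,N)\cong\Hom_{\k\s_3}(W_\chi,N_{|\s_3})$, combined with the classification in Theorem~\ref{teo:simple s3} (whose proof is independent of this proposition, so there is no circularity), shows that the head of $I_\chi$ is exactly $S_\chi$, and a module with simple head is indecomposable. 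One minor remark: projectivity of $I_\chi$ is not actually needed for the multiplicity statement, since for any finite-dimensional module $M$ every map $M\to S$ kills $\rad M$, so $\dim\Hom(M,S)$ computes the multiplicity of $S$ in $M/\rad M$ over an algebraically closed field. Your argument is shorter, avoids the matrix computation entirely, and isolates the real reason the statement holds, namely that $W_\chi$ appears in the $\k\s_3$-restriction of exactly one simple $\mA_1$-module, with multiplicity one; the paper's computation, while heavier, has the modest virtue of being self-contained and of producing the explicit structure of $I_\eps$ along the way.
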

\begin{proof}
 In view of Prop \ref{pro:proj}, we only have to check that $I_\chi$ is
 indecomposable. We work with $\chi=\eps$, the other case being analogous, or follows by tensoring with $S_{\sg}$. Let $e_\eps=\sum_{t\in\s_3}H_t\in\mA_1$, then it
is clear that $ \{x e_\eps\,|\,x\in X\}$ is a basis of
$I_\eps$. Moreover, if we change this basis by the following one:
\begin{align*}
\{e_\eps\}&\cup\{(a_{12}a_{13}a_{12}a_{23}-a_{12}a_{23})e_{\eps}\}\cup\{(a_{12}
+a_{13}+a_{23})e_\eps\}\\
&\cup\{(a_{12}a_{13}a_{12}-a_{12}a_{13}a_{23}-a_{13}a_{12}a_{23}-a_{13}-2a_{12}
)e_{\eps}\}\\
&\cup\{(a_{12}-2a_{13}+a_{23})e_\eps,(2a_{23}-a_{12}-a_{13})e_\eps\}\\
&\cup\{(a_{13}a_{23}-a_{13}a_{12})e_\eps,(a_{12}a_{13}-a_{12}a_{23}+a_{13}a_{23}
-a_{13}a_{12})e_\eps\}\\
&\cup\{(a_{12}a_{13}+a_{12}a_{23}+a_{13}a_{12})e_\eps,(-a_{12}a_{13}+a_{13}a_{23
}-a_{13}a_{12})e_{\eps}\}\\
&\cup\{(a_{12}a_{13}a_{12}+2a_{12}a_{13}a_{23}-a_{13}a_{12}a_{23}+a_{12}-a_{13}
)e_{\eps},\\
&\qquad(2a_{12}a_{13}a_{12}+a_{12}a_{13}a_{23}+a_{13}a_{12}a_{23}-a_{12}+a_{13}
)e_{\eps}\}
\end{align*}
then we can see that $$(I_\eps)_{|\s_3}\cong W_\eps\oplus W_\eps\oplus
W_{\sg}\oplus W_{\sg}\oplus W_{\st}\oplus W_{\st}\oplus W_{\st}\oplus
W_{\st}.$$ Now we deal with the action of
$a_{12}$. Notice that in the first basis, the matrix of $a_{12}$ is
$E_{2,1}+E_{5,3}+E_{6,4}+E_{10,7}+E_{9,8}+E_{12,11}$, where
$E_{i,j}$ is the matrix whose all its entries are zero except for
the $(i,j)$-th one, which is a 1. It is possible to change the
basis in such a way that the decomposition in $\s_3$-simple modules
is preserved and the matrix of $a_{12}$ becomes:

\begin{equation*}
 [a_{12}]=\left[\begin{array}{cccccccccccc}
0& 0& 0& 0& 0& 0& 0& 0& 0& 0& 0& 0\\
0& 0& 0& -1& -1& 1& -1& 1& -1& 1& -1& 1\\
\frac13& 0& 0& 0& 0& 0& 0& 0& 0& 0& 0& 0\\
0& 0& 0& 0& -2 \im&-2 \im& 2 \im& 2 \im& 0& 0& 0& 0\\
-\frac1{12}& 0& 0& 0& \im& \im& 0& 0& 0& 0& 0& 0\\
\frac1{12}& 0& 0& 0& -\im& -\im& 0&0& 0& 0& 0& 0\\
-\frac1{12}& 0& 0& 0& 0& 0& -\im& -\im& 0& 0& 0& 0\\
\frac1{12}& 0& 0& 0& 0& 0& \im&\im& 0& 0& 0& 0\\
\frac1{12}& 0& -\frac\im6& 0& 0& 0& 0& 0& \frac\im3& -\frac\im3& 0& 0\\
-\frac1{12}& 0& -\frac\im6& 0& 0& 0& 0& 0& \frac\im3& -\frac\im3& 0& 0\\
\frac1{12}& 0& \frac\im6& 0& 0& 0& 0& 0& 0& 0& -\frac\im3& \frac\im3\\
-\frac1{12}& 0& \frac\im6& 0& 0& 0& 0& 0& 0& 0& -\frac\im3&
\frac\im3
\end{array}\right].
\end{equation*}

Let $\{x_1,x_2,y_1,y_2,v_1,w_1,v_2,w_2,v_3,w_3,v_4,w_4\}$ be this
new basis. Assume $I_\eps=U_1\oplus U_2$, for $U_1,U_2$
$\mA_1$-submodules. Thus, there exists $i=1,2$,
$\lambda\neq 0, \mu\in\k$ such that $x=\lambda x_1+\mu x_2\in U_i$.
Acting with $a_{12}$ we have that
$y_1,v_1+v_2-v_3-v_4\in U_i$. As $y_1\in U_i$, acting once again
with $a_{12}$ we have that also $v_3-v_4\in U_i$ and thus
$v_3+v_4\in U_i$ (again by the action of $a_{12}$). Therefore
$v_3,v_4\in U_i$ and so $x_2,y_2,x_1,v_1+v_2\in U_i$. But then
$v_1-v_2\in U_i$ and thus $U_i=I_\eps$.
\end{proof}

We are left with finding the projective covers $P_{\st}(\theta)$ of
the 2-dimensional $\mA_1$-modules $S_{\st}(\pm\theta)$,
$\theta\in\{\im,\frac\im3\}$. Since these modules
are
\begin{equation*}
  S_{\st}(\im),\quad  S_{\st}(\im)\ot S_{\sg},\quad S_{\sg}\ot
S_{\st}(\im),\quad\text{and}\quad S_{\sg}\ot S_{\st}(\im)\ot
S_{\sg},
\end{equation*}
see Prop. \ref{pro:tensor}, and $P_{\st}(\theta)\cong \mA_1
e_{\st}(\theta)$, they will all have the same dimension. Moreover,
we will necessarily have $\dim P_{st}(\theta)=6$, $\forall\,\theta$,
by \eqref{eqn:proj}.

\begin{pro}
 Let $P$ be the $\k\s_3$-module with basis $\{x,y,u,t,v,w\}$, where 
$
\langle x\rangle _{|\s_3}=W_\eps,\quad\langle y\rangle
_{|\s_3}=W_{\sg},\quad\langle u,t \rangle _{|\s_3}=W_{\st},\quad
\langle v,w\rangle _{|\s_3}=W_{\st}.
$
Then $P$ is an $\mA_1$-module via
$$
\k\{x,y,u,t\}\cong M(0,2\im,1,1,0,0)[-\im],\quad a_{12}\cdot
v=x-2\im y+u+t+\im(v-w).
$$
Moreover $P=P_{\st}(\im)$ is the projective cover of the simple
module $S_{\st}(\im)$.

\medbreak

As a result, we have
$P_{\st}(-\frac\im3)=P_{\st}(\im)\ot S_{\sg}$,
$P_{\st}(\frac\im3)=S_{\sg}\ot P_{\st}(\im)$ and
$P_{\st}(-\im)=S_{\sg}\ot P_{\st}(\im)\ot S_{\sg}$.
\end{pro}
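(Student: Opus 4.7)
The plan is to verify that the stated formulas define an $\mA_1$-module structure on $P$, and then to identify $P$ with $P_{\st}(\im)$ via the universal property of the projective cover.

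For the first step, the subspace $N := \k\{x, y, u, t\}$ already carries, by hypothesis, the $\mA_1$-action of $M(0, 2\im, 1, 1, 0, 0)[-\im]$ from Proposition \ref{pro:indesc4}. In view of Remark \ref{rem:Ht y a12}, it suffices to specify the action of $a_{12}$ on $v$ (the action on $w$ is then determined by $H_{12}$-equivariance, and the action of the remaining $a_\sigma$'s by conjugation with $H_t$) and to verify the relations $a_{12}^2 v = 0$ and $(a_{12}a_{23} + a_{23}a_{13} + a_{13}a_{12})v = (1 - H_{12}H_{23})v$, which reduce to direct, if lengthy, computations using the explicit formula for $a_{12}v$.

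Next, the quotient $P/N$ has basis $\bar v, \bar w$ with $a_{12}\bar v = \im(\bar v - \bar w)$, which by Lemma \ref{lem:Sst(i)} is exactly $S_{\st}(\im)$; hence there is a canonical surjection $\pi : P \twoheadrightarrow S_{\st}(\im)$ with kernel $N$. To conclude $P \cong P_\st(\im)$, the key step is to show that $P$ has simple top $S_\st(\im)$, i.e., that $N$ is the unique maximal submodule of $P$. For this I compute $\Hom_{\mA_1}(P, S)$ for each of the six simple $\mA_1$-modules $S$ classified in Theorem \ref{teo:simple s3}: any such morphism is $\s_3$-equivariant, so vanishes on the isotypic components of $P$ of types different from $S$, and the remaining freedom is parametrized by one or two scalars via Schur's lemma. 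Imposing the $\mA_1$-linearity constraints $\phi(a_{12}\cdot) = a_{12}\phi(\cdot)$ on the relevant basis vectors forces these scalars to vanish in every case except $S = S_\st(\im)$, where a one-dimensional family of morphisms survives; hence $\dim\Hom_{\mA_1}(P, S_\st(\im)) = 1$ and $\dim\Hom_{\mA_1}(P, S) = 0$ otherwise, so the top of $P$ is $S_\st(\im)$.

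A module with simple top is indecomposable, and the radical of $P$ equals $N$. Since $P_\st(\im)$ is projective, $\pi$ lifts to $\phi : P_\st(\im) \to P$ with $\img(\phi) + N = P$, so Nakayama's Lemma yields $\phi$ surjective, hence an isomorphism since $\dim P = 6 = \dim P_\st(\im)$. The three remaining identifications then follow from Proposition \ref{pro:tensor}: tensoring with $S_\sg$ on either side is an autoequivalence of $\mA_1-\mo$ preserving projectivity and indecomposability, and sends $S_\st(\im)$ respectively to $S_\st(-\im/3)$, $S_\st(\im/3)$ and $S_\st(-\im)$, so it carries $P_\st(\im)$ to the corresponding projective covers. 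The main obstacle is the verification in the third paragraph that $P$ has no simple quotient other than $S_\st(\im)$; although each Hom calculation is short, they must be performed for all six simples.
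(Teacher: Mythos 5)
Your proposal is correct, and its overall architecture coincides with the paper's: verify the defining relations for the given action of $a_{12}$, identify $P/\k\{x,y,u,t\}$ with $S_{\st}(\im)$, show the resulting surjection $\pi$ is essential, and then compare with $P_{\st}(\im)$ by lifting along $\pi$ and using the dimension count $\dim P_{\st}(\im)=6$ forced by \eqref{eqn:proj}. The one place you diverge is the essentiality step: the paper takes an arbitrary submodule $N$ with $\pi(N)=P/U$, picks an element of the form $v+\lambda u$ in it, and chases the $a_{12}$-action ($y_1$, then $v_3\pm v_4$, etc.) until $N=P$, whereas you compute $\Hom_{\mA_1}(P,S)$ for all six simples of Theorem \ref{teo:simple s3} to conclude that the top of $P$ is $S_{\st}(\im)$, and then apply Nakayama with $\mathrm{rad}\,P=\k\{x,y,u,t\}$. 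The two are logically equivalent; the paper's version is one explicit element computation inside the six-dimensional module, while yours trades that for six short Schur-lemma checks but makes the structural reason (simple top) more transparent and reusable. The closing identifications of $P_{\st}(\pm\frac\im3)$ and $P_{\st}(-\im)$ via tensoring with $S_{\sg}$, using that this is an autoequivalence permuting the simples as in Prop. \ref{pro:tensor}, are handled the same way in both arguments.
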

\begin{proof}
The matrix of
$a_{12}$ in the given basis is
\begin{equation*}
 [a_{12}]=\left(\begin{array}{cccccc}
   0& 1& 0& 0& 1& -1\\
 0& 0& 0& 0& -2 \im& -2 \im\\
 2 \im& 1& -\im& -\im& 1& -1\\
 -2 \im& 1& \im& \im& 1& -1\\
 0& 0& 0& 0& \im& \im\\
 0& 0& 0& 0& -\im& -\im
 \end{array}\right).
\end{equation*}
Via the action of $H_{13}, H_{23}$ we define the matrices of
$a_{13},a_{23}$ and then it is easy to check that
\begin{align*}
  [H_{12}][a_{12}]&=-[a_{12}][H_{12}],\\
  [a_{12}]^2&=0 \\
 [a_{12}][a_{13}]+[a_{13}][a_{23}]+[a_{23}][a_{12}]&=\id_{6\times
6}-[H_{12}][H_{12}],
\end{align*}
and thus $P$ is an $\mA_1$-module.

Now, it is clear that $U=\k\{x,y,u,t\}$ is an $\mA_1$-submodule and
that the canonical projection $\pi:P\twoheadrightarrow P/U$ gives a
surjection over $S_{\st}(\im)$. Moreover, this surjection is
essential. In fact, let $N\subset P$ be an $\mA_1$-submodule, such
that $N/U\cong S_{\st}(\im)$. In particular, there exists
$\lambda\neq 0\in\k$ such that $\lambda u+ v\in P$. Now,
$
a_{12}(v+\lambda u)=x-2\im
y+(1-\lambda\im)u+(-1+\lambda\im)t+\im(v-w),
$
and thus $x,y\in N$. But $x\in N\Rightarrow u,v\in N$ and therefore
$N=P$. Consequently, $\pi:P\to P/U$ is essential.

Now, if $(P_{\st}(\im),f)$ is the projective cover of
$S_{\st}(\im)$, we have the following commutative diagram
\begin{equation*}
 \xymatrix{
  & & P_{\st}(\im)\ar[d]^f\ar@{-->}[lld]_{g}\\
  P\ar@{->>}[r]^{\pi} & P/U\ar@{-}[r]^{\cong} & S_{\st}(\im).}
\end{equation*}
As $\pi$ is essential and $\pi(g(P_{\st}(\im)))\cong
S_{\st}(\im)$ we must have $g(P_{\st}(\im))=P$. But then $\dim
P=\dim P_{\st}(\im)=6$ and thus $g$ is an isomorphism.
Therefore, $(P,\pi)$ is the projective cover of $S_{\st}(\im)$. The
claim about the projective covers of the other $S_{\st}(\lambda)$'s is now
straightforward.
\end{proof}

\subsubsection{Representation type of $\mA_1$}\label{subsub:reprtypea1}

\

We show that the algebra $\mA_1$ is not of
finite representation type. From Props. \ref{pro:1sn} and \ref{pro:suma de st} it follows that
$\ext_{\mA_1}^1(S,S)=0$ for any simple one-dimensional
$\mA_1$-module $S$, and that there is an unique non-trivial
extension of $S_\eps$ by $S_{\sg}$, namely the $\mA_1$-module
$M_{\sg,\eps}$. The same holds for extensions of $S_{\sg}$ by $S_\eps$,
considering the $\mA_1$-module $M_{\eps,\sg}$. Prop. 3.7 shows that
$\ext_{\mA_1}^1(S_{\st}(\lambda),S_{\st}(\mu))=0$ for any
$\lambda,\mu\in\{\pm\im,\pm\frac\im3\}$. Now, a non-trivial
extension of one of the modules $S_\eps$ or $S_{\sg}$ by a two
dimensional $\mA_1$-module $S_{\st}(\lambda)$, or vice versa, must
come from a three dimensional indecomposable $\mA_1$-module $M$. We
have classified such modules in Lemma \ref{lem:indesc} and we see then that:

\begin{align*}
\dim\ext_{\mA_1}^1(S_\eps,S_{\st}(\lambda))=\dim\ext_{\mA_1}^1(S_{\st}(\lambda),
S_{\sg})&=\begin{cases}
    1, & \mbox{if} \ \lambda=\pm\im,\\
    0, & \mbox{if} \ \lambda=\pm\frac\im3.\\
      \end{cases}\\ 
\dim\ext_{\mA_1}^1(S_{\sg},S_{\st}(\lambda))=\dim\ext_{\mA_1}^1(S_{\st}(\lambda)
,S_\eps)&=\begin{cases}
1, & \mbox{if} \ \lambda=\pm\frac\im3,\\
0, & \mbox{if} \ \lambda=\pm\im.\\
      \end{cases}
\end{align*}
Let
$\{S_\eps,S_{\sg},S_{\st}(\im),S_{\st}(-\im),S_{\st}(\frac\im3),S_{\st}
(-\frac\im3)\}=\{1,2,3,4,5,6\}$
be an ordering of the simple $\mA_1$-modules. Then the Ext-Quiver of
$\mA_1$ is:

\begin{equation*}
 \xymatrix{& & & \bullet^1\ar[lld]\ar[ld]\ar@/^/[dd] & & \\
 Q(\mA_1):&\bullet^3\ar[rrd] & \bullet^4\ar[rd] & &\bullet^5\ar[lu] &
\bullet^6\ar[llu]\\
 && & \bullet^2\ar[ru]\ar[rru]\ar@/^/[uu] & & .}
\end{equation*}

\begin{pro}
$\mA_1$ is not of finite representation type.
\end{pro}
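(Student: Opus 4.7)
The plan is to apply the Auslander--Reiten--Smal{\o} criterion, Theorem \ref{teo:aus}, to the finite-dimensional quotient $B := \mA_1/J^2$, where $J = J(\mA_1)$ is the Jacobson radical. By construction $B$ has radical square zero, and by Lemma \ref{lem:extq} its Ext-quiver equals the Ext-quiver $Q(\mA_1)$ just exhibited.

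Next, I would read off the separated diagram of $B$ from $Q(\mA_1)$. Each of the ten arrows $i\to j$ of $Q(\mA_1)$ contributes an edge $i'$---$j''$ to the separated diagram on the twelve vertices $\{1',\ldots,6',1'',\ldots,6''\}$. Enumerating the arrows at vertex $1$ (namely $1\to 2,\, 1\to 3,\, 1\to 4,\, 2\to 1,\, 5\to 1,\, 6\to 1$) and at vertex $2$ (namely $2\to 1,\, 2\to 5,\, 2\to 6,\, 1\to 2,\, 3\to 2,\, 4\to 2$) shows that the separated diagram is the disjoint union of two components. The first, on $\{3'',4'',1',2'',3',4'\}$, has $1'$ and $2''$ as its two vertices of degree $3$, joined by an edge, with leaves $3'',4''$ attached to $1'$ and leaves $3',4'$ attached to $2''$; the second, on $\{5'',6'',2',1'',5',6'\}$, has the same H-shape with $2'$ and $1''$ as its trivalent vertices. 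Each component is isomorphic to the affine Dynkin diagram $\widetilde D_5$.

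Since $\widetilde D_5$ is affine but not finite, the separated diagram of $B$ is not a disjoint union of finite Dynkin diagrams, so Theorem \ref{teo:aus} yields that $B$ is not of finite representation type. Because any indecomposable $B$-module pulls back to an indecomposable $\mA_1$-module through the canonical projection $\mA_1 \twoheadrightarrow B$, the algebra $\mA_1$ likewise admits infinitely many pairwise non-isomorphic indecomposable modules, and therefore is not of finite representation type. The only non-routine step is recognising the H-shape as $\widetilde D_5$, which is immediate from the classification of affine Dynkin diagrams.
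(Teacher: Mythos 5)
Your proof is correct and follows essentially the same route as the paper: pass to $\mA_1/J^2$, use Lemma \ref{lem:extq} to identify its Ext-quiver with $Q(\mA_1)$, observe that the separated diagram is $\widetilde D_5\sqcup\widetilde D_5$, and conclude via Theorem \ref{teo:aus} together with pulling back indecomposables along the projection. The only difference is that you spell out the computation of the separated diagram, which the paper states without detail.
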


\begin{proof}
The separation diagram of $\mA_1$ is $D_5^{(1)}\coprod D_5^{(1)}$,
with $D_5^{(1)}$ the extended affine Dynkin diagram corresponding to
the classical Dynkin diagram $D_5$.
By Lemma \ref{lem:extq} we have that $\mA_1/J(\mA_1)^2$ (a
quotient of $\mA_1$) is not of finite representation type (it is, in
fact, tame) by Th. \ref{teo:aus}, and so neither is $\mA_1$.
\end{proof}

\subsection*{Acknowledgements} I thank my advisor Nicol\'as Andruskiewitsch for his
many suggestions and the careful reading of
this work. I also thank Gast\'on Garc\'ia for fruitful discussions at
early stages of the work. I thank Mar\'ia In\'es Platzeck for enlightening
conversations.

\end{document}